\documentclass[reqno]{amsart}
\usepackage{amsmath,amsfonts,amssymb,amsthm}
\usepackage[usenames,dvipsnames]{xcolor}
\usepackage{graphicx}
\usepackage[colorlinks=true]{hyperref} 
\usepackage{url}
\hypersetup{linkcolor=Violet,citecolor=PineGreen}
\newtheorem{teor}{Theorem}[section]

\newtheorem{prop}[teor]{Proposition}
\newtheorem{coro}[teor]{Corollary}
\theoremstyle{definition}
\newtheorem{defi}[teor]{Definition}

\newtheorem{hipos}[teor]{Hypotheses}
\newtheorem{nota}[teor]{Remark}

\newtheorem{notas}[teor]{Remarks}
\numberwithin{equation}{section}

\newcommand{\M}{\mathbb{M}}
\newcommand{\N}{\mathbb{N}}

\newcommand{\R}{\mathbb{R}}
\renewcommand{\S}{\mathbb{S}}
\renewcommand{\P}{\mathbb{P}}
\newcommand{\T}{\mathbb{T}}
\newcommand{\Y}{\mathbb{Y}}
\newcommand{\Z}{\mathbb{Z}}
\newcommand{\mA}{\mathcal{A}}
\newcommand{\mB}{\mathcal{B}}
\newcommand{\mC}{\mathcal{C}}
\newcommand{\mD}{\mathcal{D}}

\newcommand{\mK}{\mathcal{K}}

\newcommand{\mM}{\mathcal{M}}

\newcommand{\mP}{\mathcal{P}}

\newcommand{\mR}{\mathcal{R}}
\newcommand{\mS}{\mathcal{S}}
\newcommand{\mU}{\mathcal{U}}
\newcommand{\mV}{\mathcal{V}}

\newcommand{\mkp}{\mathfrak{p}}
\newcommand{\ep}{\varepsilon}
\newcommand{\pu}{{\cdot}}

\newcommand{\W}{\Omega}
\newcommand{\WS}{\W\!\times\!\S}
\newcommand{\WP}{\W\!\times\!\P}
\newcommand{\w}{\omega}
\newcommand{\wth}{\omega,\theta}
\newcommand{\wt}{\w\pu t}
\newcommand{\ws}{\w\pu s}

\newcommand{\bw}{\mathbf w}

\newcommand{\by}{\mathbf y}
\newcommand{\bz}{\mathbf z}

\newcommand{\bcero}{\mathbf 0}
\newcommand{\wit}{\widetilde}
\newcommand{\wih}{\widehat}

\newcommand{\nbd}{\nobreakdash}

\newcommand{\p}{\partial}
\newcommand{\lsm}{\left[\begin{smallmatrix}}
\newcommand{\rsm}{\end{smallmatrix}\right]}

\newcommand{\dist}{\text{\rm dist}}
\DeclareMathOperator{\tr}{tr}

\DeclareMathOperator{\cls}{closure}

\begin{document}
\title[Li-Yorke chaos in nonautonomous Hopf bifurcation.]
{Li-Yorke chaos in nonautonomous Hopf bifurcation patterns - I.}
\author[C.~N\'{u}\~{n}ez]{Carmen N\'{u}\~{n}ez}
\author[R.~Obaya]{Rafael Obaya}
\address
{Departamento de Matem\'{a}tica Aplicada, Universidad de
Valladolid, Paseo del Cauce 59, 47011 Valladolid, Spain}
\email[Carmen N\'{u}\~{n}ez]{carnun@wmatem.eis.uva.es}
\email[Rafael Obaya]{rafoba@wmatem.eis.uva.es}
\thanks{Partly supported by MINECO/FEDER (Spain)
under project MTM2015-66330-P and by European Commission under project
H2020-MSCA-ITN-2014.}
\subjclass[2010]{
37B55, 
37G35, 
37D45  
34C23, 
}
\date{}
\begin{abstract}
We analyze the characteristics of the global attractor of a type of
dissipative nonautonomous dynamical systems
in terms of the Sacker and Sell spectrum of its linear part.
The model gives rise to a pattern of nonautonomous Hopf bifurcation
which can be understood as a generalization of the classical autonomous one.
We pay special attention to the dynamics at the bifurcation point,
showing the possibility of occurrence of Li-Yorke chaos in the
corresponding attractor and hence of a high degree of unpredictability.
\end{abstract}
\keywords{
Nonautonomous bifurcation theory,
Hopf bifurcation,
Global attractor,
Li-Yorke chaos,
Sacker and Sell spectrum
}
\maketitle
\section{Introduction}
The nonautonomous bifurcation theory
\textcolor{black}{
for ordinary or partial differential equations}
is a
\textcolor{black}{relatively new},
complex, and challenging subject
of study for which many fundamental problems remain open. One
of the initial questions is to determine what kind of objects
are those whose structural variation determines the occurrence
of bifurcation, as it happens with the constant and periodic solutions in the
autonomous case. The use of the skew-product formalism in the analysis
of the solutions of nonautonomous differential equations,
highly developed during the last decades,
has given several possible answers to this question: those objects
\textcolor{black}{we referred to}
can be bounded solutions, recurrent solutions, minimal sets,
local attractors, or global attractors.
The choice of each one of these categories defines a different
research line, and all of them have shown their relevance.
The works of Braaksma {\em et al.\/} \cite{brbh}, Alonso and Obaya \cite{alob3},
Johnson and Mantellini \cite{joma}, Fabbri {\em et al.\/} \cite{fajm},
Langa {\em et al.\/} \cite{lars1,lars2}, Rasmussen \cite{rasm2, rasm4},
N\'{u}\~{n}ez and Obaya \cite{nuob6}, P\"{o}tzsche \cite{potz2,potz4},
Anagnostopoulou and J\"{a}ger \cite{anja}, Fuhrmann~\cite{fuhr},
and Caraballo {\em et al.} \cite{clos},
develop some of these lines, providing nonautonomous
transcritical, saddle-node and pitchfork bifurcation patterns.
In some cases these nonautonomous phenomena admit a dynamical
description analogous to that of the autonomous case. But in other cases they
present some extremely complex dynamical phenomena,
which cannot occur in the autonomous scenery or even in the periodic one.
\par
\textcolor{black}{
The question of determining what a Hopf bifurcation means
in the nonautonomous case is even more complex. Among the
works devoted to this problem we can mention those of
Braaksma and Broer \cite{brbr} and Braaksma {\em et al.\/} \cite{brbh}
(who focus on the quasiperidic case and talk about bifurcation when
there is a change of dimension of invariant tori),
Johnson and Yi \cite{joyi} (where special attention is paid to the case in which an
invariant two-torus looses stability), Johnson {\em et al.\/} \cite{jokp}
(who study a two-step bifurcation pattern of Arnold type, in terms of the
global change of the local and pullback attractor)
Anagnostopoulou {\em et al.\/} \cite{anjk}
(where a Hopf bifurcation pattern for a discrete skew-product flow is analyzed,
and where the bifurcation corresponds to a global change in the global
attractor), and Franca {\em et al.\/} \cite{frjm}
(where conditions are established for nonautonomous perturbations
of a classical autonomous pattern of Andronov-Hopf bifurcation
ensuring the persistence of the perturbation),
as well as some of the references therein.
}
\par
The present work is the first of two papers devoted to the analysis
of Hopf bifurcation phenomena occurring for a one-parametric
family of families of nonautonomous two-dimensional systems of ODEs of the form
\begin{equation}\label{1.ecintro}
 \by'=A^\ep(\sigma(t,\w))\,\by-k_\rho(|\by|)\,\by\,,\qquad\w\in\W:
\end{equation}
each one of the systems is
defined along one of the orbits of a continuous flow $(\W,\sigma,\R)$ on
a compact metric space $\W$, which is minimal and uniquely ergodic.
Here, $A^\ep\colon\W\to\M_{2\times 2}(\R)$ is a family of continuous maps
for $\ep$ in a given interval;
$|\by|$ represents the Euclidean norm of $\by\in\R^2$; and the map
$k_\rho$ is given by
\[
 k_\rho\colon\R^+\!\to\R^+\!\,,\quad r\mapsto\left\{\begin{array}{ll}
 0&\text{if}\;\,0\le r\le\rho\,,\\
 (r-\rho)^2&\text{if}\;\,r\ge\rho\,,\end{array}\right.
\]
for a fixed value of $\rho\in(0,1]$.
\textcolor{black}{That is, $k$ is $C^1$ on
$\R^+$, convex, increasing and unbounded, and it vanishes on $[0,\rho]$.}
\par
Frequently, this setting comes
from a one-parametric system of the form
\[
 \by'=A_0^\ep(t)\,\by-k_\rho(|\by|)\,\by\,.
\]
\textcolor{black}{For instance, let $A_0$ be an almost-periodic matrix-valued
function on $\R$, and $A_0^\ep=A_0+\ep I_2$. In this case we can take
$\W$ as the hull $A_0$ (that is, the closure in the compact-open topology
of the set $\{A_s\,|\;s\in\R\}$ with $A_s(t)=A_0(t+s)$), define $\sigma$ by
time translation (that is, $\sigma(t,\w)(s)=\w(t+s)$), and define
$A(\w)=\w(0)$ (so that $A(\sigma(t,\w))=\w(t)$) and $A^\ep=A+\ep I_2$. The properties of
compactness of $\W$, of continuity, minimality and ergodic uniqueness
of the flow $\sigma$, and of continuity of the map $A$ are proved in
Sell~\cite{sell}. Note that the almost-periodic situation includes as particular
cases those in which $A_0$ is constant (with $\W$ given by a point) or periodic
(with $\W$ given by a circle), which are the simplest ones, as
well as those in which $A_0$ is quasiperiodic (with $\W$ given by a torus)
or limit-periodic (in which case $\W$ can be a solenoid: see \cite{mese},
and observe that in this example $\W$ is not a locally connected space,
so that it cannot be identified with a differentiable manifold).}
\par
\textcolor{black}{But we do not restrict ourselves to the almost-periodic
situation, which makes our setting more general; it is known
that there exist other functions for which the flow on the
hull is minimal and uniquely ergodic, although they are not easy to describe.
The main advantage of having a family like \eqref{1.ecintro}
instead of a single system}
is that the solutions of
all the systems corresponding to any fixed $\ep$
allow us to define a skew-product flow
$(\W\!\times\!\R^2\!,\tau^\ep_\R,\R)$
with $(\W,\sigma,\R)$ as base component.
\textcolor{black}{
In addition, the boundedness of $A^\ep$ combined with our particular
choice for $k_\rho$ ensures the dissipativity of the family for each value of $\ep$
(as we will check in Section \ref{sec4}), and hence the
existence of a global attractor $\mA^\ep\subset\W\!\times\!\R^2$
for $\tau_\R^\ep$.}
\par
The existence of this global attractor is the starting point:
our bifurcation analysis is focused on the evolution of
$\mA^\ep$ as $\ep$ varies, so that
a substantial change on its structure determines a bifurcation point.
We put special attention in
exploring the possibility of occurrence of Li-Yorke chaos at the bifurcation
points, which is indeed the situation in some examples that we describe.
\par
We will show that the occurrence of bifurcation points is
determined by the variation of the Sacker and Sell spectrum
$\Sigma_{A^\ep}$ of the systems $\by'=A^\ep(\sigma(t,\w))\,\by$, which
due to the minimality of the base flow can be defined for any fixed
$\w\in\W$: it is the compact set of points $\lambda\in\R$ such that
the translated system $\by'=(A^\ep(\sigma(t,\w))-\lambda I_2)\,\by$
does not have exponential dichotomy over $\R$.
Let $\lambda_-(\ep)$ and $\lambda_+(\ep)$ be the
left and right edge points of $\Sigma_{A^\ep}$. In this first paper we will analyze
the bifurcation occurring at the value $\wit\ep$
of the parameter when:
\textcolor{black}{
\begin{itemize}
\item[--] if $\ep<\wit\ep$, then $\lambda_+(\ep)<0$: every point in the spectrum is strictly negative;
\item[--] $\lambda_-(\wit\ep)=\lambda_+(\wit\ep)=0$: 0 is the unique point in the spectrum;
\item[--] if $\ep>\wit\ep$, then $\lambda_-(\ep)>0$: every point in the spectrum is strictly positive.
\end{itemize}}
\noindent
\textcolor{black}{
Note that in order to construct a family $A^\ep$ with these properties it suffices
to take as starting point a matrix $A$ with $\Sigma_{A}=\{0\}$
and then define $A^\ep:=A+\ep\,I_2$ (so that $\wit\ep=0$).
But many more situations may fit in our conditions.
We will show that the global attractor reduces to
$\W\times\{\bcero\}$ for $\ep<\wit\ep$. But, for $\ep>\wit\ep$, it
is given by a set which is homeomorphic to a solid cylinder around
$\W\times\{\bcero\}$; and in addition, the
boundary of the attractor is the global attractor for the flow
restricted to the set $\W\times(\R^2\!-\!\{\bcero\})$ (which is invariant, since
so is $\W\times\{\bcero\}$).}
\textcolor{black}{We will call this pattern
{\em nonautonomous Hopf bifurcation with zero spectrum}
due to the analogies of this structure with classical
autonomous Hopf bifurcation models. To understand these analogies,
just think about the classical autonomous model for
Hopf bifurcation $\by'=A^\ep\by-|\by|^2\by$
with $A^\ep=\lsm \;\,\,\ep&1\\-1&\ep\rsm$:
for $\ep<0$ the global attractor of the induced flow on $\R^2$
reduces to the origin of coordinates, while for $\ep>0$
it is given by a closed disk centered at the origin,
whose border attracts all the orbits
different from the origin.}
\par
\textcolor{black}{
At the bifurcation point $\wit\ep$, many possibilities arise. In the
simplest one, the attractor is again homeomorphic to a solid cylinder. Therefore
even in this case the bifurcation is discontinuous: just compare
with the behavior for $\ep<\wit\ep$.
And there are cases for which both the shape of the attractor and the dynamics
on it are extremely complex, with the occurrence of
Li-Yorke chaos.}
\par
Note that the specific form of $\mA^\ep$ is determined by the
Sacker and Sell spectrum of the linear part, and hence
by the matrix-valued function $A^\ep$.
The map $k_\rho$, responsible of the nonlinearity of the dynamics,
plays the role of guaranteing the dissipativity, and the
constant $\rho$ determines the global size of the attractor.
\textcolor{black}{
In addition, the fact that $k_\rho$ vanishes on $[0,\rho]$ is
fundamental to make the occurrence of Li-Yorke chaos possible,
and causes the bifurcation to be discontinuous even in the simplest
case.}
\par
\textcolor{black}{The second paper of the series will include the analysis of
a {\em nonautonomous two-step transcritical-Hopf bifurcation} pattern. In this case
we will assume the existence of $\wit\ep_1$ and $\wit\ep_2$ such that:
\begin{itemize}
\item[--] if $\ep<\wit\ep_1$, then $\lambda_+(\ep)<0$: every point in the spectrum is strictly negative;
\item[--] $\lambda_-(\wit\ep_1)<0=\lambda_+(\wit\ep_1)$: $0$ is the superior of the spectrum
but not its unique element;
\item[--] if $\ep\in(\wit\ep_1,\wit\ep_2)$, then $\lambda_-(\ep)<0<\lambda_+(\ep)$:
there are strictly negative and strictly positive points in the spectrum;
\item[--] $\lambda_-(\wit\ep_2)=0<\lambda_+(\wit\ep_2)$: $0$ is the inferior of the spectrum
but not its unique element;
\item[--] if $\ep>\wit\ep_2$, then $\lambda_-(\ep)>0$: every point in the spectrum is strictly positive.
\end{itemize}
The simplest example corresponding to this situation can be
$A^\ep:=\lsm \ep-1&0\\0&\ep\rsm$: here, $\wit\ep_1=0$ and $\wit\ep_2=1$.
For more complex (and always time-dependent) choices of $A^\ep$, we will show that
Li-Yorke chaos may appear at $\ep_1$ and/or $\ep_2$. And we will also describe the
possibility of persistence of the Li-Yorke chaos when
$\Sigma_{A^\ep}\subset(0,\infty)$. This last result is interesting for both patterns:
for the second one we have $\Sigma_{A^\ep}\subset(0,\infty)$ if $\ep>\wit\ep_2$; and
for the pattern analyzed in this paper, we have $\Sigma_{A^\ep}\subset(0,\infty)$
if $\ep$ is greater than the unique bifurcation point $\wit\ep$.}
\par
It is clear that
\textcolor{black}{the situations analyzed in these two papers}
are far away from exhausting the
possibilities. But they suffice to illustrate, once more, the extreme complexity
of the bifurcation phenomena in the nonautonomous case: there may appear
scenarios of dynamical unpredictability which
are not possible in the autonomous case.
\par
Let us sketch briefly the structure of the paper. In Section~\ref{sec2}
we recall the basic notions and results on topological dynamics and measure theory
which we will use. In the rest of this Introduction, $(\W,\sigma,\R)$
will always represent a continuous flow on a compact metric space, which
is assumed to be minimal and uniquely ergodic.
Also in Section \ref{sec2}, we pay special attention to the
skew-product flows induced on the bundles $\W\!\times\!\R^2$, $\WS$ and $\WP$
(where $\S$ is the unit circle in $\R^2$ and $\P$ is the real projective line)
by families of two-dimensional linear systems of ODEs
given by the evaluation of a continuous matrix along the orbits of the flow on $\W$.
\textcolor{black}{Systems of this type}
are also the object of analysis in
Section \ref{sec3}. We prove there that the flow given on $\WP$ by a
weakly elliptic family of linear systems is Li-Yorke chaotic in the case
that it admits an invariant measure which is absolutely continuous with
respect to the product measure on the bundle. Apart from the
intrinsic interest of this result, some of the properties shown in
its proof will be used in Section~\ref{sec6}.
\par From this point the paper is focused on the analysis of
the attractor $\mA^\ep\subset\W\!\times\!\R^2$ for a dissipative family of systems
of the type \eqref{1.ecintro}. Let us fix a value of the
parameter $\ep$, and omit the superscript on $A^\ep$ and $\mA^\ep$.
In Section \ref{sec4} we describe
this model in detail, as well as the flows induced on $\W\!\times\!\R^2$,
$\WS\!\times\!\R^+$ and $\WP\!\times\!\R^+$. We show that they are
dissipative, so that they admit global attractors, part of whose basic
properties we describe to complete the section.
\par
In Section \ref{sec5}
we relate the global shape and characteristics of the attractor $\mA$
with the characteristics of the Sacker and Sell spectrum $\Sigma_A$ of the family
of systems $\by'=A(\wt)\,\by$.
We do not contemplate in this paper all the possibilities:
since we are interested in the nonautonomous Hopf bifurcation
with zero spectrum pattern, the analysis will be
reduced to the cases $\sup\Sigma_A<0$, $\inf\Sigma_A>0$, and $\Sigma_A=\{0\}$.
As we have already mentioned, the attractor is trivial if
$\sup\Sigma_A<0$: $\mA=\W\times\{\bcero\}$. We have also mentioned that
$\mA$ takes the form of a solid cylinder around $\W$ with
continuous boundary if $\inf\Sigma_A>0$.
More precisely, for each $\w\in\W$ the section
$\mA_\w:=\{\by\in\R^2\!\,|\;(\w,\by)\in\mA\}$ contains and is homeomorphic to a
closed disk centered at $\bcero$, and the set
$\mA_\w$ varies continuously with respect to $\w$. And, in addition
the boundary of the \lq\lq cylinder", which is continuous and invariant,
is the attractor of the flow
restricted to the invariant set $\W\!\times\!(\R^2\!-\!\{\bcero\})$.
The attractor $\mA$ also takes the form of a solid cylinder with
continuous boundary in the case $\Sigma_A=\{0\}$
if in addition all the solutions of all the linear systems are bounded.
\textcolor{black}{
Therefore, even in this simplest case there is a lack of continuity
in the bifurcation.
We complete Section 5 by showing with some simple figures the evolution
of the global attractor when $A^\ep=A+\ep I_2$ and $A$ is a quasiperiodic
matrix-valued function fitting in the situation just
described, in order to clarify the sense of talking about
a Hopf bifurcation pattern.
}
\par
\textcolor{black}{
Finally, here we do not say too much about the general properties of $\mA$ if
$\Sigma_A=\{0\}$ and one unbounded solution exists.}
However, this last case is precisely the most interesting one for the purposes
of the paper.
The results obtained in Section \ref{sec3}
are a fundamental tool in Section \ref{sec6}, which
is devoted to establish conditions
ensuring the occurrence of Li-Yorke chaos,
in a very strong sense, on the attractor $\mA$.
We complete the section and the paper by showing that these
conditions are fulfilled in some interesting cases.
\textcolor{black}{For instance, when the family \eqref{1.ecintro}
is of the type
\[
 \by'=\big(\wit A(\sigma(t,\w))+(e(\sigma(t,\w))+\ep)\,I_2\big)\,\by-k_\rho(|\by|)\,\by\,,
 \qquad\w\in\W\,,
\]
where $\wit A$ has null trace, all the solutions
of all the linear systems of the family $\by'=\wit A(\wt)\,\by$
are bounded, and
$e\colon\W\to\R$ is a continuous function providing the following (highly complex)
dynamics for the flow induced on $\W\times\R$ by
the family of scalar equations $x'=e(\sigma(t,\w))\,x$: for almost every system
of the family (with respect to the unique ergodic measure) the
solutions are bounded; and there are systems for which the
solutions are not only unbounded but strongly oscillating
at $-\infty$ and $+\infty$. There are well known examples of
quasi-periodic functions $e_0\colon\R\to\R$ giving rise
to a hull $\W$ and a map $e$ with these characteristics,
as those described by Johnson in \cite{john9}
and Ortega and Tarallo in \cite{orta}. And recently
Campos {\em et al.} \cite{caot2} have proved that there
exist functions $e\colon\W\to\R$ with the required properties
whenever the (minimal and uniquely ergodic)
flow on $\W$ is not periodic.}
\par
\textcolor{black}{The conclusion is
that the carried-on analysis provides a pattern of nonautonomous Hopf bifurcation,
in which a extremely high degree of complexity is possible.
This possibility is one of the strongest differences with the
classical autonomous bifurcation theory.}
\section{Preliminaries}\label{sec2}
\subsection{Basic concepts}
We begin by recalling some basic concepts and properties of
topological dynamics and measure theory, and by fixing some notation.
\par
Let $\W$ be a complete metric space, and let $\dist_\W$ be
the distance on $\W$. A ({\em real and continuous})
{\em flow\/} on $\W$ is given by a continuous map
$\sigma\colon\R\!\times\!\W\to\W,\; (t,\w)\mapsto\sigma(t,\w)$
such that $\sigma_0=\text{Id}$ and $\sigma_{s+t}=\sigma_t\circ\sigma_s$
for each $s,t\in\R$, where $\sigma_t(\w):=\sigma(t,\w)$.
The flow is {\em local\/} if the map $\sigma$
is defined, continuous, and satisfies the previous properties on an open subset
of $\R\!\times\!\W$ containing $\{0\}\!\times\!\W$.
\par
Let the flow $(\W,\sigma,\R)$ be defined on
$\mU\subseteq\R\!\times\!\W$.
The set $\{\sigma_t(\w)\,|\;(t,\w)\in\mU\}$ is
the $\sigma$-{\em orbit\/} of the point $\w\in\W$. This orbit
is {\em globally defined\/} if $(t,\w)\in\mU$ for all $t\in\R$.
Restricting the time to $t\ge 0$ or
$t\le 0$ provides the definition of {\em forward\/}
or {\em backward $\sigma$-semiorbit}.
A subset $\mC\subseteq \W$ is {\em $\sigma$-invariant\/}
if it is composed by globally defined orbits; i.e., if
$\sigma_t(\mC):=\{\sigma(t,\w)\,|\;\w\in\mC\}$
is defined and agrees with $\mC$ for every $t\in\R$.
A $\sigma$-invariant subset $\mM\subseteq\W$ is {\em minimal\/} if it is compact
and does not contain properly any other compact $\sigma$-invariant set;
or, equivalently, if each one of the two semiorbits of anyone of
its elements is dense in it. The flow $(\W,\sigma,\R)$ is
{\em minimal\/} if $\W$ itself is minimal.
If the set $\{\sigma_t(\w_0)\,|\;t\ge 0\}$ is well-defined and
relatively compact, the {\em omega limit set\/} of $\w_0$ is given
by the points $\w\in\W$ such that $\w=\lim_{m\to \infty}\sigma_{t_m}(\w_0)$
for some sequence $(t_m)\uparrow \infty$.
This set is nonempty, compact, connected and $\sigma$-invariant.
By taking sequences $(t_m)\downarrow-\infty$ we obtain the definition of
the {\em alpha limit set\/} of $\w_0$. A global flow is {\em distal}
if $\inf_{t\in\R}\dist_\W(\sigma_t(\w_1),\sigma_t(\w_2))>0$ whenever $\w_1\ne\w_2$.
The next definitions are less standard:
\begin{defi}\label{2.defiLiYorke}
Let $(\W,\sigma,\R)$ be a continuous flow on a compact metric space.
Let $\w_1,\w_2$ be two points of $\W$ whose forward $\sigma$-orbits are
globally defined.
The points $\w_1,\w_2$ form a
{\em positively distal pair\/} for the flow
if $\liminf_{t\to\infty}\dist_{\W}(\sigma_t(\w_1),\sigma_t(\w_2))>0$, and
a {\em positively asymptotic pair\/} if
$\limsup_{t\to\infty}\dist_{\W}(\sigma_t(\w_1),\sigma_t(\w_2))=0$.
The points $\w_1,\w_2$ form
{\em Li-Yorke pair\/} for the flow if the pair is
neither positively distal nor positively asymptotic.
A set $\mS\subseteq\W$ such that every pair of different points of
$\mS$ form a Li-Yorke pair is called
a {\em scrambled set} for the flow. The flow $(\W,\sigma,\R)$
is {\em Li-Yorke chaotic}\/ if there exists an uncountable
scrambled set.
\end{defi}
This notion was introduced in \cite{liyo} in 1975.
The interested reader can find in \cite{bgkm} and \cite{akko}
some dynamical properties associated to Li-Yorke chaos and its relation with
other notions of chaotic dynamics.
\par
Let $m$ be a Borel measure on $\W$; i.e., a
regular measure defined on the Borel sets.
The measure $m$ is {\em $\sigma$-invariant\/}
if $m(\sigma_t(\mB))=m(\mB)$ for every Borel subset
$\mB\subseteq\W$ and every $t\in\R$. Suppose that $m$ is
finite and normalized; i.e., that $m(\W)=1$. Then it is
{\em $\sigma$-ergodic\/} if it is $\sigma$-invariant and,
in addition, $m(\mB)=0$ or $m(\mB)=1$ for every $\sigma$-invariant subset
$\mB\subseteq\W$.
If $\W$ is compact, the continuous flow $(\W,\sigma,\R)$
admits at least an ergodic measure. The flow is {\em uniquely
ergodic\/} if it admits just a normalized invariant measure, in which case
this measure is ergodic.
\par
Let $(\W,\sigma,\R)$ be a global flow on a compact metric space, and
let $\Y$ be a complete metric space. Let $\dist_\W$ and $\dist_\Y$ be
the distances on $\W$ and $\Y$. Then the map
$\dist_{\W\times\Y}((\w_1,y_1),(\w_2,y_2)):=
\dist_\W(\w_1,\w_2)+\dist_\Y(y_1,y_2)$ defines a distance on
$\W\!\times\!\Y$, and we have a new complete metric space.
In what follows, this
product space is understood as a bundle over $\W$.
The sets $\W$ and $\Y$ will be referred to as the
{\em base} and the {\em fiber} of the bundle.
A {\em skew-product flow
on $\W\!\times\!\Y$ projecting onto $(\W,\sigma,\R)$} is a (local or
global) flow given by a continuous map $\tau$ of the form
\[
 \tau\colon\mU\subseteq
 \R\!\times\!\W\!\times\!\Y\to\W\!\times\!\Y,\quad(\w,y)\mapsto
 (\wt,\tau_2(t,\w,y))\,.
\]
The flow $(\W,\sigma,\R)$ is the {\em base flow} of $(\W\!\times\!\Y,\tau,\R)$.
Note that the {\em fiber component\/} $\tau_2$ of $\tau$ satisfies
$\tau_2(s+t,\w,y)=\tau_2(s,\wt,\tau_2(t,\w,y))$ whenever the right-hand term is
defined. If $\Y$ is a vector space,
a global skew-product flow is {\em linear\/} if the map
$\Y\to\Y,\,y\mapsto \tau_2(t,\w,y)$ is
linear for all $(t,\w)\in\R\times\W$. A measurable map
$\alpha\colon\W\to\Y$ is an {\em equilibrium for $\tau$}
if $\tau_2(t,\w,\alpha(\w))=\alpha(\wt)$ for all $t\in\R$ and $\w\in\W$. A
set $\mK\subset\W\times\Y$ is a {\em copy of the base for $\tau$}
if it is the graph of a continuous equilibrium.
\begin{defi}
Let $(\W\!\times\!\Y,\tau,\R)$ be a skew-product flow
over a minimal and uniquely ergodic base $(\W,\sigma,\R)$, and let
$\mK\subseteq\W\!\times\!\Y$ be a $\tau$-invariant compact set. Then
the restricted flow $(\mK,\tau,\R)$ is {\em Li-Yorke fiber-chaotic in measure\/}
if there exists a set $\W_0\subseteq\W$
with full measure such that $\mK$ contains an uncountable scrambled set
of Li-Yorke pairs with first component $\w$ for each $\w\in\W_0$.
\end{defi}
\begin{nota}\label{2.notaLiYorke}
It is clear that, in the case of skew-product flow
$(\W\!\times\!\Y,\tau,\R)$, a pair of
points $(\w,y_1),\,(\w,y_2)$ (with common first component) form: a
positively distal pair if and only if $\liminf_{t\to\infty}\dist_\Y(\tau_2(t,\w,y_1),
\tau_2(t,\w,y_2))>0$; a positively asymptotic pair
if and only if $\limsup_{t\to\infty}\dist_\Y(\tau_2(t,\w,y_1),
\tau_2(t,\w,y_2))=0$; and a Li-Yorke pair if these two conditions fail.
\par
Note also that the notion of Li-Yorke fiber-chaos in measure, much more
exigent than that of Li-Yorke chaos, makes only sense in the setting of
skew-product flows. The same happens with the notion of residually
Li-Yorke chaotic flow, previously analyzed in \cite{bjjo}
and \cite{huyi}. Li-Yorke chaos for nonautonomous
dynamical systems is also the object of analysis in \cite{calo} and
\cite{clos}.
\end{nota}
The {\em Hausdorff semidistance\/}
from $\mC_1$ to $\mC_2$, where $\mC_1, \mC_2\subset\W\times\Y$, is
\[
 \dist(\mC_1,\mC_2):=\sup_{(\w_1,y_1)\in\mC_1}\left(\inf_{(\w_2,y_2)\in\mC_2}
 \big(\dist_{\W\times\Y}((\w_1,y_1),(\w_2,y_2))\big)\right).
\]
\begin{defi}\label{2.defiatractor}
A set $\mB\subset\W\!\times\!\Y$ is said {\em to attract a set
$\mC\subseteq\W$ under $\tau$} if
$\tau_t(\mC)$ is defined for all $t\ge 0$ and,
in addition, $\lim_{t\to\infty}\dist(\tau_t(\mC),\mB)=0$. The flow
$\tau$ is {\em bounded
dissipative\/} if there exists a bounded set $\mB$ attracting all
the bounded subsets of $\W\times\Y$ under $\tau$.
And a set $\mA\subset\W\times\Y$
is a {\em global attractor\/} for $\tau$ if it is compact, $\tau$-invariant, and it
attracts every bounded subset of $\W\!\times\!\Y$ under $\tau$.
\end{defi}
As usual, given a subset $\mC\subseteq\W\times\Y$, we will
represent its sections over the base elements by $\mC_\w:=
\{y\in\Y\,|\;(\w,y)\in\mC\}$.
Finally, given a normalized Borel measure on $m_\W$ on $\W$
and a regular measure $m_\Y$ on $\Y$, we represent by
$m_\W\!\times m_\Y$ the product measure on $\W\!\times\!\Y$.
A measure $m$ on $\W\!\times\!\Y$ {\em projects onto $m_\W$} if
$m(\mB\times\Y)=m_\W(\mB)$ for any Borel set $\mB\subseteq\W$. If this
is the case and $m$ is $\tau$-invariant, then $m_\W$ is $\sigma$-invariant.
\subsection{The flows induced by a linear family}
As usual, we identify the unit circle $\S$ of $\R^2$ and the one-dimensional
real projective line $\P$ with the quotient spaces
$\R/(2\pi\Z)$ and $\R/(\pi\Z)$, respectively. In this way, the map
\[
 \mkp\colon\S\to\P\,,\quad \theta\mapsto\theta(\text{mod}\:\pi)
\]
defines a projection of $\S$ onto $\P$. Note that
$\S$ can be understood as a 2-cover of $\P$: if $\theta\in\P$,
then $\mkp^{-1}(\theta)=\{\theta,\theta+\pi\}\subset\S$.
The map $\mkp$ will be frequently used.
\par
Let $(\W,\sigma,\R)$ be a minimal flow on a compact metric space.
(The ergodic uniqueness is not required by now.)
Given four continuous functions $a,b,c,d\colon\W\to\R$,
we consider the family of nonautonomous
two-dimensional linear systems of ODEs
\begin{equation}\label{2.eqlinear}
\by'=\left[\begin{array}{cc}a(\wt)&b(\wt)\\c(\wt)&d(\wt)\end{array}\right]\by
\end{equation}
for $\w\in\W$, with $\by\in\R^2\!$. We call $A:=\lsm a&b\\c&d\rsm$.
We will use the notation \eqref{2.eqlinear}$_\w$ to refer
to the system of this family corresponding to the point $\w$, and will proceed
in an analogous way for the rest of the equations appearing in the paper.
And we represent by $\by_l(t,\w,\by_0)$
the (globally defined) solution of the system \eqref{2.eqlinear}$_\w$
with initial data $\by_l(0,\w,\by_0)=\by_0$: the subindex $l$ makes reference to
the linearity of the systems. Then the map
\[
 \tau_{l,\R}\colon\R\!\times\!\W\!\times\!\R^2\!\to\W\!\times\!\R^2\!\,,
 \quad(t,\w,\by_0)\mapsto(\wt,\by_l(t,\w,\by_0))
\]
defines a linear skew-product flow with base $(\W,\sigma,\R)$.
It is possible to write
\begin{equation}\label{2.relx}
 \by_l(t,\w,\by_0)=
 \left[\!\begin{array}{c} r_l(t,\wth,r_0)\sin(\wih\theta(t,\wth))\\
 r_l(t,\wth,r_0)\cos(\wih\theta(t,\wth))\end{array}\!\right]
 \quad\text{for }\;\by_0=\left[\!\begin{array}{c} r_0\sin\theta\\
 r_0\cos\theta\end{array}\!\right]:
\end{equation}
here $t\mapsto\wih\theta(t,\wth)$ is the solution of the equation
\begin{equation}\label{2.eqtheta}
 \theta'=f(\wt,\theta)
\end{equation}
for
\begin{equation}\label{2.deffz}
 f(\wth):=-c(\w)\sin^2\!\theta+b(\w)\cos^2\!\theta+(a(\w)-
 d(\w))\sin\theta\cos\theta
\end{equation}
with initial data $\wih\theta(0,\wth)=\theta$, which we understand as
an element of $\S$; and the map
$t\mapsto r_l(t,\wth,r_0)$ solves
\begin{equation}\label{2.eqrlinear}
 r'=r\,g(\wt,\wih\theta(t,\wth))
\end{equation}
with $r_l(0,\wth,r_0)=r_0$, for
\begin{equation}\label{2.defgr}
 g(\wth):=
 a(\w)\sin^2\!\theta+d(\w)\cos^2\!\theta+(b(\w)+c(\w))\sin\theta\cos\theta\,.
\end{equation}
Note that the map
\begin{equation}\label{2.defsigma2}
 \wih\sigma\colon\R\!\times\!\WS\to\WS\,,
 \quad(t,\wth)\mapsto(\wt,\wih\theta(t,\wth))
\end{equation}
defines a skew-product flow on $\WS$.
Note also that $f(\wth)=f(\wth+\pi)$ and
$g(\wth)=g(\wth+\pi)$: we can define them either on $\WP$ or on $\WS$.
Consequently,
\[
 \wih\theta(t,\wth+\pi)=\wih\theta(t,\wth)+\pi
 \quad\text{and}\quad r_l(t,\wth,r_0)=r_l(t,\wth+\pi,r_0)\,.
\]
In particular, we
can understand the solutions of \eqref{2.eqtheta}
as elements $\P$: let us write $\wit\theta(t,\wth)=
\mkp(\wih\theta(t,\wth))=\wih\theta(t,\wth)(\text{mod}\:\pi)$
for $(t,\wth)\in\R\!\times\!\WP$,
and note that
\begin{equation}\label{2.defsigma3}
 \wit\sigma\colon\R\!\times\!\WP\to\WP\,,
 \quad(t,\wth)\mapsto(\wt,\wit\theta(t,\wth))
\end{equation}
defines a global skew-product flow on $\WP$. We say that
$(\WS,\wih\sigma,\R)$ {\em projects onto\/} $(\WP,\wit\sigma,\R)$.
Note also that $\mkp(\wih\theta(t,\wth))=\wit\theta(t,\w,\mkp(\theta))$
for $(t,\wth)\in\R\!\times\!\WS$.
\par
Let $U(t,\w)$ be the fundamental matrix solution of \eqref{2.eqlinear}$_\w$
with $U(0,\w)=I_2$, so that $\by_l(\w,\w,\by_0)=U(t,\w)\,\by_0$.
For further purposes we recall that
\begin{equation}\label{2.reldetU}
\det U(t,\w_0)=\exp\Big(\int_0^t \tr A(\w_0\pu s)\,ds\Big)\,.
\end{equation}
\begin{defi}\label{2.defEDsubfibrados}
The family \eqref{2.eqlinear} has {\em exponential dichotomy
over $\W$\/} if there exist
constants $c\ge 1$ and $\gamma>0$ and a splitting $\W\!\times
\R^2\!=F^+\!\oplus F^-$ of the bundle into the Whitney sum of
two closed subbundles such that
\begin{itemize}
 \item[-] $F^+$ and $F^-$ are invariant under
 the flow $(\W\!\times\!\R^2\!,\tau_{l,\R},\R)\,$,
 \item[-] $|\,U(t,\w)\,\by_0|\le c\,e^{-\gamma t}|\by_0|\quad$ for
   every $t\ge 0$ and $(\w,\by_0)\in F^+$,
 \item[-] $|\,U(t,\w)\,\by_0|\le c\,e^{\gamma t}|\by_0|\quad\;\;\,$
   for every $t\le 0$ and $(\w,\by_0)\in F^-$.
\end{itemize}
\end{defi}
\begin{notas} \label{2.notaED}
1.~Since the base flow $(\W,\sigma,\R)$ is minimal, the exponential
dichotomy of the family \eqref{2.eqlinear} over $\W$ is equivalent
to the exponential dichotomy of any of its systems over $\R$:
see e.g.~Theorem 2 and Section 3 of \cite{sase2}.
\par
2.~The family \eqref{2.eqlinear} has
exponential dichotomy over $\W$ if and only if no one of its systems
has a nontrivial bounded solution: see e.g.~Theorem 1.61 of \cite{jonnf}.
\end{notas}
\begin{defi}\label{2.defSSS}
The {\em Sacker and Sell spectrum\/} or {\em dynamical spectrum\/}
of the linear family \eqref{2.eqlinear}
is the set $\Sigma_A$ of $\lambda\in\R$ such that the family
$\by'=(A(\wt)-\lambda I_2)\,\by$
does not have exponential dichotomy over $\W$.
\end{defi}
Now we assume also that
the base flow $(\W,\sigma,\R)$ is uniquely ergodic, and represent by
$m_\W$ the unique $\sigma$-invariant (ergodic) measure.
The next theorem summarizes part of the information provided by
the Oseledets theorem (see Section 2 of \cite{jops})
and the Sacker and Sell spectral theorem (see Theorem 6 of \cite{sase6}).
\begin{teor}\label{2.teorOsel}
One of the following situations holds.
\par
\hypertarget{C1}{{\sc Case 1.}} $\Sigma_A=[\gamma_1,\gamma_2]$
with $\gamma_1<\gamma_2$.
In this case there exists a $\sigma$\nbd-invariant subset
$\W_0\subset\W$ with $m_\W(\W_0)=1$ such that, for all $\w\in\W_0$,
\begin{itemize}
\item[\rm o.1)] there exist
two one-dimensional vector spaces $W^{\gamma_1}_\w$ and $W^{\gamma_2}_\w$ such that
$\lim_{|t|\to\infty}(1/t)\ln|U(t,\w)\,\by_0|=
\gamma_j$ for $\by_0\in W^{\gamma_j}_\w-\{\bcero\}$.
\item[\rm o.2)] $\R^2\!=W^{\gamma_1}_\w\oplus W^{\gamma_2}_\w$.
\end{itemize}
In particular, if $\w\in\W_0$ and $\by^j=\lsm y^j_1\\y^j_2\rsm\in
W^{\gamma_j}_\w-\{\bcero\}$,
and we call $\theta^{\gamma_j}(\w):=\tan^{-1\!} (y^j_1/y^j_2)\in\P$, then
$\theta^{\gamma_j}(\w)$ is uniquely determined for $j=1,2$. In addition,
\begin{itemize}
\item[\rm o.3)] the maps $\W_0\to\P\,,\;\w\mapsto\theta^{\gamma_j}(\w)$
are measurable and satisfy $\wit\theta(t,\wth^{\gamma_j}(\w))=
\theta^{\gamma_j}(\wt)$ for all $t\in\R$ and $\w\in\W_0$, and for $j=1,2$.
\item[\rm o.4)] $\lim_{|t|\to\infty}(1/t)\ln\det U(t,\w)=\gamma_1+\gamma_2$
for all $\w\in\W$.
\end{itemize}
Consequently, the measurable subbundles
\begin{equation}\label{2.defOsel}
 W^{\gamma_j}:=\{(\w,\by)\in\W\!\times\!\R^2\!\,|\;
 \w\in\W_0 \text{ and }\by\in W^{\gamma_j}_\w\}
\end{equation}
are $\tau_{l,\R}$-invariant for $j=1,2$.
\par
\hypertarget{C2}{{\sc Case 2.}} $\Sigma_A=\{\gamma_1\}\cup\{\gamma_2\}$. Then all
the assertions in \hyperlink{C1}{\sc Case 1} hold for $\W_0=\W$. In addition:
$W^{\gamma_1}$ and $W^{\gamma_2}$ are closed subbundles; $\theta^{\gamma_1}$ and
$\theta^{\gamma_2}$ are continuous maps;
$\W\!\times\!\R^2\!=W^{\gamma_1}\oplus W^{\gamma_2}$
as Whitney sum;
$\lim_{t\to-\infty}\dist_{\P}(\wit\theta(t,\wth),\theta^{\gamma_1}(\wt))=0$
if $\theta\ne\theta^{\gamma_2}(\w)$; and
$\lim_{t\to\infty}\dist_{\P}(\wit\theta(t,\wth),\theta^{\gamma_2}(\wt))=0$
if $\theta\ne\theta^{\gamma_1}(\w)$.
\par
\hypertarget{C3}{{\sc Case 3.}} $\Sigma_A=\{\gamma\}$.
In this case, for all $\w\in\W$, $\lim_{|t|\to\infty}(1/t)\ln|\,U(t,\w)\,\by_0|=
\gamma$ for $\by_0\in \R^2\!-\{\bcero\}$, and
$\lim_{|t|\to\infty}(1/t)\ln\det U(t,\w)=2\gamma$.
\end{teor}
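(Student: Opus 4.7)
The theorem is presented as a summary of the Oseledets multiplicative ergodic theorem (as developed for continuous skew-product flows in \cite{jops}) and the Sacker and Sell spectral theorem (from \cite{sase6}), so the plan is to invoke these two results, match the three listed cases to the corresponding spectral configurations, and upgrade the almost-everywhere content of Oseledets to pointwise or uniform statements wherever the Sacker--Sell structure supplies a spectral gap or where unique ergodicity applies to a continuous observable.

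For \hyperlink{C1}{Case 1}, I would apply Oseledets to the cocycle $U(t,\w)$ on $\W\!\times\!\R^2$ with the unique ergodic measure $m_\W$. In dimension two this produces a $\sigma$-invariant full-measure set $\W_0\subseteq\W$, at most two real Lyapunov exponents, and one-dimensional Oseledets subspaces realizing each exponent. For two-dimensional cocycles over a uniquely ergodic base the Lyapunov exponents are known to coincide with the extremes of the Sacker--Sell spectrum (cf.\ Section~2 of \cite{jops}), which identifies them with $\gamma_1$ and $\gamma_2$ and yields items o.1 and o.2; item o.3 then follows from measurability of the Oseledets projectors together with the $\tau_{l,\R}$-invariance of the subspaces $W^{\gamma_j}_\w$. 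For o.4 I would combine the determinant formula \eqref{2.reldetU} with unique ergodicity of the base applied to the continuous observable $\tr A$: the Birkhoff average $(1/t)\int_0^t \tr A(\w\pu s)\,ds$ converges uniformly in $\w\in\W$ to $\int_\W \tr A\,dm_\W$, and Oseledets identifies this common limit with $\gamma_1+\gamma_2$.

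For \hyperlink{C2}{Case 2}, the gap between the two spectral points lets me pick any $\lambda\in(\gamma_1,\gamma_2)$; by the definition of $\Sigma_A$ the shifted family $\by'=(A(\wt)-\lambda I_2)\,\by$ then has exponential dichotomy over $\W$. Definition~\ref{2.defEDsubfibrados} provides a continuous Whitney splitting $\W\!\times\!\R^2=F^+\!\oplus F^-$ into $\tau_{l,\R}$-invariant closed line bundles with uniform exponential estimates. Identifying $F^-$ with $W^{\gamma_1}$ and $F^+$ with $W^{\gamma_2}$, these uniform estimates pin the Lyapunov exponent on each bundle to $\gamma_1$ or $\gamma_2$ respectively for \emph{every} $\w\in\W$, so $\W_0=\W$, and the angular maps $\theta^{\gamma_j}$ inherit continuity from the line bundles; the forward and backward asymptotics of $\wit\theta(t,\wth)$ follow from reading the hyperbolicity of the splitting through the polar representation \eqref{2.relx}--\eqref{2.eqtheta}. \hyperlink{C3}{Case 3} uses the same template: every $\lambda\ne\gamma$ lies outside $\Sigma_A$, so the shifted family has exponential dichotomy; the corresponding rate estimates force $(1/t)\ln|U(t,\w)\,\by_0|\to\gamma$ uniformly on the sphere bundle, and the determinant statement reduces to unique ergodicity of $\tr A$ as in o.4.

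The delicate step is the consistent upgrade from the measure-theoretic conclusions of Oseledets to pointwise or uniform ones. This relies crucially on unique ergodicity of the base flow for the trace observable (giving o.4 and the determinant statement in \hyperlink{C3}{Case 3} for every $\w\in\W$) and on exponential dichotomy for the angular convergence and for continuity of $\theta^{\gamma_j}$ in \hyperlink{C2}{Case 2}. Since the result is explicitly billed as a summary, the real task is to extract the precise statements needed from \cite{jops} and \cite{sase6} and repackage them in the three-case format, rather than to supply new arguments.
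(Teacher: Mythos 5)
Your proposal is correct and takes essentially the same approach as the paper: the paper presents Theorem~\ref{2.teorOsel} without a proof, explicitly as a summary of the Oseledets theorem from~\cite{jops} and the Sacker--Sell spectral theorem from~\cite{sase6}, and your unpacking (measure-theoretic Lyapunov splitting in Case~1, uniform upgrade of o.4 via unique ergodicity and the determinant formula \eqref{2.reldetU}, continuous Whitney splitting from the spectral gap in Case~2, and ED for all $\lambda\ne\gamma$ forcing the exponent in Case~3) is exactly the content those references supply.
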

\begin{defi}
In {\sc cases 1} and 2, the sets $W^{\gamma_j}$ defined by
\eqref{2.defOsel} are the {\em Oseledets subbundles}
of the family of linear systems \eqref{2.eqlinear}, and
the numbers $\gamma_1$ and $\gamma_2$ are its
{\em Lyapunov exponents\/}.
In {\sc case 3}, the value $\gamma$ is the unique {\em Lyapunov exponent}.
\end{defi}
\begin{nota}\label{2.notarelac}
In the case that $\Sigma_A\subset(-\infty,0)$, $F^+=\W\!\times\!\R^2$;
and, if $\Sigma_A\subset(0,\infty)$, then $F^-=\W\!\times\!\R^2$.
These assertions follow easily from the fact that $\Sigma_A$ contains
all the Lyapunov exponents of the family (see
e.g.~Theorem 2.3 of \cite{jops}) and from the casuistic described in
Theorem~\ref{2.teorOsel}.
\end{nota}
\section{Li-Yorke chaos for weakly elliptic linear systems }\label{sec3}
As explained in the Introduction, this section provides conditions
on a certain type of linear systems
which ensure the occurrence of Li-Yorke chaos for the corresponding projective
flow. This is done in Theorem \ref{3.teorcaos}.
As a matter of fact, we will show that for almost every point $\w\in\W$,
the scrambled set of points of the form $(\wth)$
for the flow $(\WP,\wit\sigma,\R)$ contains all the points
of $\{\w\}\!\times\!\P$ excepting at most one.
Apart from the independent interest of this result,
its proof includes some arguments which will be essential in the
proof of our main result, in Section \ref{sec6}.
\par
For the rest of the paper, $(\W,\sigma,\R)$ will be a minimal and
uniquely ergodic flow on a compact metric space.
The unique $\sigma$-ergodic measure on $\W$ will be denoted by $m_\W$; $l_{\R^n}\!$
will be the Lebesgue measure on $\R^n$; and $l_\S$ and $l_\P$ will denote
the normalized Lebesgue measures on $\S$ and $\P$.
\par
We consider a family of linear (Hamiltonian) systems with zero trace,
\begin{equation}\label{3.eclineartraza0}
 \by'=\wit A(\wt)\,\by
\end{equation}
for $\w\in\W$, where $\wit A=\lsm \wit a&\,b\\c&-\wit a\rsm$.
(The choice of the names for the coefficients is due~to the fact that
$b$ and $c$ will later agree with those of \eqref{2.eqlinear}.)
The angular equation is
\begin{equation}\label{3.eqtheta}
 \theta'=f(\wt,\theta)
\vspace{-.2cm}
\end{equation}
for
\begin{equation}\label{3.deffz}
  f(\wth):=-c(\w)\sin^2\!\theta+b(\w)\cos^2\!\theta+
 2\,\wit a(\w)\sin\theta\cos\theta\,.
\end{equation}
As in the previous section,
we will represent by $\wih\theta(t,\w,\theta)$ and $\wit\theta(t,\w,\theta)$
the solutions on $\S$ and $\P$ of the equation with initial datum $\theta$,
and by $(\WS,\wih\sigma,\R)$ and $(\WP,\wit\sigma,\R)$ the corresponding flows,
given by the expressions \eqref{2.defsigma2} and \eqref{2.defsigma3}.
\begin{defi}\label{3.defiweak}
The family \eqref{3.eclineartraza0}
{\em is in the weakly elliptic case\/} if its Sacker and Sell spectrum
is $\Sigma_{\wit A}=\{0\}$ and at least one of its systems has an unbounded solution.
\end{defi}
We will describe in the next remarks two already classical
procedures which will be used several times
in what follows, and fix the corresponding notation.
\begin{nota}\label{3.notacambio}
Let $C\colon\W\to\M_{2\times 2}(\R)$ be continuous, with $\det C\equiv 1$, and
such that $C'\!\colon\W\to\M_{2\times 2}(\R)$ with
$C'(\w):=(d/dt)\,C(\wt)|_{t=0}$ is a well-defined and continuous
function.
Let us consider the change of variables $(\w,\by)\mapsto(\w,\bz)$ given by
$\bz=C(\w)\,\by$. This change of variables takes the system
$\by'=\wit A(\wt)\,\by$ to $\bz'=\wit A^*\!(\wt)\,\bz$ with
$\wit A^*\!:=(C'\!+CA)\,C^{-1}$. It is easy to check that $\tr\wit A^*=0$.
In addition,
\begin{itemize}
\item[\bf p1.] $\by'=\wit A(\wt)\,\by$ is in the weakly
elliptic case if and only if $\bz'=\wit A^*\!(\wt)\,\bz$ is, as
immediately deduced from $\bz(t,\w,\bz_0)=C(\wt)\,\by(t,\w,C^{-1}(\w)\,\bz_0)$
together with the boundedness of $C$ and $C^{-1}$.
\item[\bf p2.]
The linear and continuous change of variables
induces a homeomorphism $\Phi\colon\WP\to\WP$
with $\Phi(\wth)=(\w,\phi_\w(\theta))$ and
$\wit\sigma^*(t,\Phi(\wth))=\Phi(\wit\sigma(t,\wth))$.
It follows from here that
two points $(\wth_1),(\wth_2)$ are
a Li-Yorke pair for $(\WP,\wit\sigma,\R)$
if and only if the points
$(\w,\phi_\w(\theta_1)),(\w,\phi_\w(\theta_1))$
are a Li-Yorke pair  for $(\WP,\wit\sigma^*,\R)$.
The same argument shows that the property is also true for  positively
distal pairs instead of Li-Yorke pairs.
\item[\bf p3.] If the flow $(\WP,\wit\sigma,\R)$ admits an invariant measure
$\mu$ which is absolutely continuous with respect to $m_\W\!\!\times\!l_\P$, then
so does the flow $(\WP,\wit\sigma^*\!,\R)$ defined from the family
$\bz'=\wit A^*\!(\wt)\,\bz$. To prove it, we use the fact that, for all $\w\in\W$,
the homeomorphism $\phi_\w\colon\P\to\P$
takes measures of $\P$ which are absolutely continuous with respect to
$l_\P$ to measures of the same type. (In turn, this property can be deduced
from the fact that $\bz\mapsto C(\w)\,\bz$ preserves the Lebesghe measure
in $\R^2$, since $\det C(\w)=1$.) This assertion combined with
Fubini's theorem ensures that the measure $\mu^*$ defined over the
Borel sets by
$\mu^*(\mB):=\mu(\Phi^{-1}(\mB))$ (which is $\wit\sigma^*$-invariant) is
absolutely continuous with respect to $m_\W\!\!\times\!l_\P$.
(The maps $\Phi$ and $\phi_\w$ are defined in {\bf p2}.)
\end{itemize}
\end{nota}
\begin{nota}\label{3.notaextension}
Let us consider the flow $(\WS,\wih\sigma,\R)$.
We take a $\wih\sigma$-minimal set $\mM\subseteq\WS$.
For each $(\wth^1)\in\mM$, we define $\wit a^1\!(\wth^1):=
\wit a(\w),\,b^1\!(\wth^1):=b(\w)$ and $c^1\!(\wth^1):=c(\w)$, and consider
the family of linear systems
\begin{equation}\label{3.ecext}
\by'=\left[\begin{array}{rr} \wit a^1\!(\wih\theta(t,\wth^1))&
b^1\!(\wih\theta(t,\wth^1))\\
c^1\!(\wih\theta(t,\wth^1))&
-\wit a^1\!(\wih\theta(t,\wth^1))\end{array}\right]\by
\end{equation}
for $(\wth^1)\in\mM$. Note that the angular equation
\begin{equation}\label{3.eqthetaext}
 \theta':=-c^1\!(\wih\theta(t,\wth^1))\sin^2\!\theta+
 b^1\!(\wih\theta(t,\wth^1))\cos^2\!\theta+
 2\,\wit a^1\!(\wih\theta(t,\wth^1))\sin\theta\cos\theta
\end{equation}
corresponding to $(\wth^1)$ agrees with \eqref{3.eqtheta}$_\w$. Therefore,
the two skew-product flows with base $\mM$ defined by the family \eqref{3.eqthetaext}
are
\begin{eqnarray}
&\wih\vartheta_{\mM}\colon\R\!\times\!\mM\!\times\S\to\mM\!\times\S\,,\quad
(t,\wth^1\!,\theta)\mapsto (\wih\theta(t,\wth^1),
\wih\theta(t,\wth))\,,\nonumber\\
&\wit\vartheta_{\mM}\colon\R\!\times\!\mM\!\times\P\to\mM\!\times\P\,,\quad
(t,\wth^1\!,\theta)\mapsto (\wih\theta(t,\wth^1),
\wit\theta(t,\wth))\,.\label{3.flujovarP}
\end{eqnarray}
We list some properties relating \eqref{3.eclineartraza0} to \eqref{3.ecext},
later required.
\begin{itemize}
\item[\bf p4.]
If the family \eqref{3.eclineartraza0} is in the weakly elliptic case,
then so \eqref{3.ecext} is: 0 is its unique Lyapunov exponent,
and there exists an unbounded solution.
\item[\bf p5.]
Let us take $(\wth^1)\in\mM$ and $\theta_1,\theta_2\in\P$.
Then the points $(\wth_1),(\wth_2)$ are a Li-Yorke pair for $(\WP,\wit\sigma,\R)$
if and only if the points $(\wth^1\!,\theta_1),(\wth^1\!,\theta_2)$ are
a Li-Yorke pair for $(\mM\!\times\P,\wit\vartheta_{\mM},\R)$.
This fact follows immediately from the fact that the fiber
component of $\wit\vartheta_{\mM}(t,\wth^1\!,\theta)$ agrees with that
of $\wit\sigma(t,\wth)$. And the property is also true for positively
distal pairs.
\item[\bf p6.]
Let us assume that $(\WP,\wit\sigma,\R)$
admits an invariant measure $m$ which is absolutely continuous with respect to
$m_\W\!\!\times\!l_\P$, and
let $m_{\mM}$ be a $\wih\sigma$-ergodic measure on $\mM$, which
projects onto $m_\W$. Let $q\in L^1(\WP,m_\W\!\!\times\!l_\P)$ be the
density function of $m$, and let $f$ be defined by \eqref{3.deffz}.
Proposition 2.2 of \cite{obpa} ensures that there exists a measurable function
$p\colon\WP\to\R^+$
with $p(\wth)=q(\wth)$ for $(m_\W\!\!\times\!l_\P)$-a.a.~$(\wth)\in\WP$
such that
\begin{equation}\label{3.ecfuncional}
 p(\wit\sigma(l,\wth))=p(\wth)\exp\Big(-\int_0^l \frac{\p f}{\p\theta}\,
 (\wit\sigma(s,\wth))\,ds\Big)
\end{equation}
for all $(\wth)\in\WP$ and $l\in\R$. Let us define
$p^1(\wth^1\!,\theta):=p(\wth)$ for all $(\wth^1\!,\theta)\in\mM\!\times\!\P$.
It is easy to check that the nonnegative function
$p^1$ belongs to $L^1(\mM\!\times\P,m_\mM\!\!\times\!l_\P)$
and that it satisfies the equation \eqref{3.ecfuncional} corresponding to
$(\mM\!\times\P,\wit\vartheta_\mM,\R)$ for all
$(\wth^1\!,\theta)\in\mM\!\times\!\P$ and $l\in\R$. A new application of
Proposition 2.2 of \cite{obpa} shows that
$(\mM\!\times\P,\wit\vartheta_{\mM},\R)$
admits an invariant measure which is absolutely continuous with respect to
$m_\mM\!\!\times\! l_\P$.
\end{itemize}
Note also that the set
$\mM^1\!:=\{(\wth^1\!,\theta^1)\,|\;
(\wth^1)\in\mM\}$ is a copy of the base
for the flow $(\mM\!\times\S,\wih\vartheta_{\mM},\R)$.
The last property is the main achievement of this procedure: the
existence of this copy of the base (which may not be the case for
\eqref{3.eclineartraza0}), will allow us
to define linear and continuous changes of variables taking
\eqref{3.ecext} to families of systems whose corresponding dynamics
are easier to describe; and from this description
we will be able to derive the required conclusions for the
initial family.
\end{nota}
\begin{teor}\label{3.teorcaos}
Let us assume that the family \eqref{3.eclineartraza0} is in the weakly elliptic case.
Let us assume also that the flow $(\WP,\wit\sigma,\R)$
admits an invariant measure which is absolutely continuous with respect to
$m_\W\!\!\times\!l_\P$.
Then there exists a $\sigma$-invariant subset $\W_0\subseteq\W$ with
$m_\W(\W_0)=1$ such that for every $\w\in\W_0$ there exists a subset
$\mP_\w\subseteq\P$ with the next properties: $\P-\mP_\w$ contains
at most one element; and
for every pair of different points $\theta_1,\theta_2\in\mP_\w$, the
points $(\wth_1), (\wth_2)$ form a Li-Yorke pair for
$(\WP,\wit\sigma,\R)$. Hence, the flow $(\WP,\wit\sigma,\R)$
is Li-Yorke fiber-chaotic in measure.
\end{teor}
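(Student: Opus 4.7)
The plan is to combine the two reduction procedures of Remarks~\ref{3.notacambio} and~\ref{3.notaextension} with an explicit analysis of a triangularised angular equation. First I apply the extension of Remark~\ref{3.notaextension}: pick a $\wih\sigma$-minimal set $\mM\subseteq\WS$ and pass to the family~\eqref{3.ecext} on $\mM\!\times\!\P$. Property~\textbf{p4} preserves the weakly elliptic character, \textbf{p6} transfers the absolutely continuous invariant measure to some $\mu^*$ on $\mM\!\times\!\P$, and \textbf{p5} shows that a Li-Yorke conclusion on $\mM\!\times\!\P$ pushes back to $\WP$. The decisive payoff is the continuous invariant angular section $\theta^1$ furnished by the copy of the base $\mM^1\subseteq\mM\!\times\!\S$.

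Next I use Remark~\ref{3.notacambio} to rotate the fibres, aligning $\theta^1$ with the constant zero section. Properties~\textbf{p1}--\textbf{p3} preserve the weakly elliptic character, the absolutely continuous invariant measure, and the Li-Yorke/positively distal fibre structure. Since $\theta\equiv 0$ is now invariant, the new coefficient $b^*$ vanishes on $\mM$ and $\wit A^*=\lsm \alpha&0\\ \beta&-\alpha\rsm$, so the angular equation outside $\theta=0$ linearises via $u=\cot\theta$ to the scalar ODE $u'=-2\alpha(t)\,u+\beta(t)$. Setting $A(t,\wth^1):=\int_0^t\alpha(\wih\sigma(s,\wth^1))\,ds$, the explicit solution reads
\begin{equation*}
 u(t)=e^{-2A(t)}\Big(u(0)+\int_0^t\beta(s)\,e^{2A(s)}\,ds\Big)\,,\quad
 u_1(t)-u_2(t)=e^{-2A(t)}(u_1(0)-u_2(0)).
\end{equation*}

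The dynamical core is a coupled analysis of $A$ and the $\R$-extension $(\mM\!\times\!\R,(\wih\sigma,A),\R)$. Unique ergodicity together with $\Sigma_{\wit A^*}=\{0\}$ gives $A(t,\wth^1)/t\to 0$ almost surely. The absolute continuity of $\mu^*$ combined with Poincar\'{e} recurrence on $\mM\!\times\!\P$ forbids the $\R$-extension of $A$ from being transient, since transience would force $\theta$-trajectories from a full-measure set of initial angles to converge to $\theta=0$ via the $u$-formula, contradicting the $\theta$-recurrence that $\mu^*$ imposes almost everywhere. The weakly elliptic hypothesis of an unbounded solution, combined with the density functional equation~\eqref{3.ecfuncional} evaluated along the invariant section, simultaneously rules out $A(t,\wth^1)$ being bounded above. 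These two facts together furnish, on a $\wih\sigma$-invariant full-measure set $\mM_0\subseteq\mM$, sequences $s_n,t_n\uparrow\infty$ with $A(s_n,\wth^1)$ bounded (and the associated $u_i(s_n)$ also bounded) and $A(t_n,\wth^1)\to+\infty$.

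Translating back via the formulae: along $t_n$ the factor $e^{-2A(t_n)}\to 0$ collapses $u_1(t_n)$ and $u_2(t_n)$ to a common limit, so $\dist_\P(\wit\theta(t_n,\wth^1,\theta_1),\wit\theta(t_n,\wth^1,\theta_2))\to 0$; along $s_n$ the difference $u_1(s_n)-u_2(s_n)=e^{-2A(s_n)}(u_1(0)-u_2(0))$ stays bounded away from zero while each $u_i$ remains bounded, so the $\P$-distance stays bounded below. This produces Li-Yorke pairs on $\mM_0\!\times\!(\P\!\setminus\!\{0\})$ and, after undoing the rotation of step two and projecting $\mM_0$ onto the $\sigma$-invariant set $\W_0\subseteq\W$ with $m_\W(\W_0)=1$, the single possibly excluded angle on each fibre corresponds to $\theta^1(\wth^1)$, and \textbf{p5} descends the conclusion to $(\WP,\wit\sigma,\R)$. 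The main obstacle is the third paragraph: controlling simultaneously the cocycle $A$ and the inhomogeneous integral $\int_0^t\beta\,e^{2A}\,ds$ so that the oscillations of $A$ translate faithfully into $\P$-distance oscillations for $m_\mM$-a.e.\ $\wth^1$, a step that carefully exploits the density equation~\eqref{3.ecfuncional}, Poincar\'{e} recurrence on $(\mM\!\times\!\P,\mu^*)$, and the sublinearity of $A$.
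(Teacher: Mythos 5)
Your reduction — extend to a $\wih\sigma$-minimal set $\mM$ via Remark~\ref{3.notaextension}, then rotate with Remark~\ref{3.notacambio} to bring the invariant angular section to $\theta\equiv 0$ — is exactly the reduction the paper carries out in {\sc step 2} of its proof, so the framework is sound. The point where you diverge is the treatment of the resulting triangular system: the paper invokes the ergodic $1$-sheet decomposition of Proposition~3.3 of~\cite{noob1} (the sheets are $\varphi_\eta(\wth^1)=\text{arccot}\big(\eta\,m_a^2(\wth^1)+\cot\varphi_0(\wth^1)\big)$, which is exactly your $u$-formula re-indexed), applies Lusin's theorem to get a compact set $\mK$ of positive measure on which $m_a$ and $\varphi_0$ are continuous, and uses Birkhoff recurrence into $\mK$ to obtain $\limsup_{t\to\infty}\dist_\P>0$. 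Then it handles $\liminf=0$ via a dichotomy on $m_a$. You instead try to derive both conclusions directly from the cocycle $A(t)=\int_0^t\alpha$ and the explicit affine solution formula, without the Lusin/Birkhoff packaging.

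The third paragraph is where the gaps lie, and they are genuine. First, you claim the weak ellipticity rules out $A$ being bounded above; this is not established and is not quite right, since an unbounded solution of the triangular system can come from unboundedness of $\int_0^t\beta\,e^{2A}\,ds$ even when $A$ is bounded. The paper's actual statement is weaker and bidirectional: $m_a^*$ (i.e.\ $e^{-A}$) cannot be bounded \emph{both} above and below, so along some sequence either $m_a^*\to 0$ (your $A\to+\infty$ case) or $m_a^*\to\infty$; the paper then handles each branch, each time excising a single exceptional $\theta$. Your proposal treats only the first branch. Second — and this is the ``main obstacle'' you acknowledge but do not close — your $\limsup>0$ argument needs a sequence $(s_n)\uparrow\infty$ along which $e^{-2A(s_n)}$ is bounded below \emph{and} each $u_i(s_n)$ stays bounded, i.e.\ simultaneous control of the cocycle and the inhomogeneous integral. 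This is precisely what the $1$-sheet decomposition together with Lusin's theorem buys: on $\mK$ every $\varphi_\eta$ is continuous, hence two sheets with $\eta_1\ne\eta_2$ have a uniform $\P$-gap, and Birkhoff supplies the recurrence times. Without that (or an equivalent device) the boundedness of $u_i(s_n)$ is unsubstantiated. Finally, the paper first disposes of the (trivial) subcase in which $(\WP,\wit\sigma,\R)$ has no positively distal pair, where $\limsup>0$ forces Li-Yorke with no further work; omitting that case distinction is harmless but means your argument must work uniformly, which it currently does not.
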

\begin{proof}
The proof is carried-out in two steps: the first one contains auxiliary
results for the second one, which proves the statements.
\vspace{.1cm}\par
{\sc Step 1}. We will begin by assuming that the
family \eqref{3.eclineartraza0} is triangular,
\begin{equation}\label{3.ectrian1}
\by'=\left[\begin{array}{cc}\wit a(\wt)&0\\c(\wt)&-\wit a(\wt)\end{array}\right]\by\,.
\end{equation}
Later on we will assume that the flow $(\WP,\wit\sigma,\R)$ either
does not contain a positively distal pair,
or it has two minimal sets.
\par
The angular equation for \eqref{3.ectrian1} is
$\theta'=2\,\wit a(\wt)\sin\theta\cos\theta-c(\wt)\sin^2\theta$,
so that the compact set
$\{(\w,0)\}\subset\WP$ is $\wit\sigma$-invariant. Therefore
it concentrates a $\wit\sigma$-invariant measure, which together with the
assumed existence of a $\wit\sigma$-invariant measure absolutely
continuous with respect to $m_\W\!\!\times\! l_\P$ allows us to apply
Proposition 3.3 of \cite{noob1} in order to conclude that:
there exist a $\sigma$-invariant set $\W_1$
with $m_\W(\W_1)=1$ and  measurable functions
$m_a\colon\W_1\to\R$ and
$\varphi_0\colon\W_1\to\P$ (with $(d/dt)\,m_a(\wt)=-\wit a(\wt)\,m_a(\wt)$
and such that $t\mapsto\varphi_0(\wt)$
satisfies the angular equation, in both cases
for all $\w\in\W_1$)
such that $\WP$ decomposes into the disjoint
union of the measurable $\wit\sigma$-invariant sets
$\mS_\eta:=\{(\w,\varphi_\eta(\w))\,|\;\w\in\W_1\}$ for $\eta\in(-\infty,\infty]$,
where $\varphi_\eta(\w):=\text{arccot\,}(\eta\,m_a^2(\w)+\cot\varphi_0(\w))$.
(These sets are the {\em ergodic $1$-sheets}, using the
language of \cite{noob1}; see also \cite{furs}.)
Let $\mK\subseteq\W_1$ be a compact set with $m_\W(\mK)>0$
such that the restrictions of $m_a$ and $\varphi_0$ to $\mK$ are continuous.
Let $\W_0^*\subseteq\W$ be the set of points $\w$ for which there
exists a sequence $(t_n)\uparrow\infty$ with $\wt_n\in\mK$.
Birkhoff's ergodic theorem ensures that $m_\W(\W_0^*)=1$.
Now we fix $\w\in\W_0^*$ and choose a sequence $(t_n)\uparrow\infty$ such
that $\wt_n\in\mK$ for all $n\ge 0$. We take two different points
$\theta_1,\theta_2\in\P$ and write them as
$\theta_i=\varphi_{\eta_i}(\w)$, so that $\eta_1\ne\eta_2$. Then
$\dist_\P(\wit\theta(t_n,\w,\theta_1),\wit\theta(t_n,\w,\theta_2))
\ge \inf_{\wit\w\in\mK}\dist_\P(\varphi_{\eta_2}(\wit\w),\varphi_{\eta_2}(\wit\w))>0$
if $n\ge 0$. In particular, for $\w\in\W_0^*$ and $\theta_1\ne\theta_2$,
\begin{equation}\label{3.paluego1}
\limsup_{t\to\infty}
\dist_\P(\wit\theta(t,\w,\theta_1),\wit\theta(t,\w,\theta_2))>0\,.
\end{equation}
\par
Now we consider two different situations. The first one is simple: the flow
$(\WP,\wit\sigma,\R)$ does not admit a positively distal pair.
Then \eqref{3.paluego1} ensures that the pair $(\w,\theta_1),\,(\w,\theta_2)$ is
of Li-Yorke type whenever $\w\in\W_0^*$ and $\theta_1,\theta_2\in\P$
with $\theta_1\ne\theta_2$. To complete the proof in this case,
we define $\W_0:=\W_0^*$ and
$\mP_{\w}:=\P$ for each $\w\in\W_0$.
\par
The second case we consider is that the flow
$(\WP,\wit\sigma,\R)$ admits two different minimal sets.
The proof of Proposition 4.4 in \cite{john6} (which
does not require the almost-periodicity of the base flow, there assumed)
shows that these minimal sets are the unique ones: otherwise all the
solutions of the systems of the family \eqref{3.ectrian1}
would be bounded, which is not the case.
We already know that one of
these minimal sets is $\{(\w,0)\,|\;\w\in\W\}$.
Therefore there exists $\delta>0$ such that any $(\wth^1)$
in the second minimal set satisfies $\delta\le\theta^1\le\pi-\delta$.
Let us take a $\wih\sigma$-minimal set $\mM\subset\WS$
projecting onto the second minimal set of $\WP$, and perform the procedure
described in Remark~\ref{3.notaextension}, taking now
\eqref{3.ectrian1} as starting point. Note that the
obtained system \eqref{3.ecext} is now also triangular: $b^1\equiv 0$.
In addition, the set $\mM^1\!:=\{(\wth^1\!,\theta^1)\,|\;
(\wth^1)\in\mM\}$ is a copy of the base for the flow
$(\mM\!\times\S,\wih\vartheta_{\mM},\R)$, and
it is contained either in $\WS\times[\delta,\pi-\delta]$
or in $[\pi+\delta,2\pi-\delta]$.
A straightforward computation shows that the
bounded and continuous change of variables on
$\mM\!\times\!\R^2$ given by $(\wth^1\!,\by)\mapsto(\wth^1\!,\bw)$ for
$\bw:=\lsm1&0\\-\cot\theta^1&1\rsm\by$
takes the (now triangular) family \eqref{3.ecext} to a new
family of the form
\begin{equation}\label{3.ecdiag1}
\bw'=\left[\begin{array}{cc} \wih a^{\,1}(\wih\theta(t,\wth^1))&
0\\0&-\wih a^{\,1}(\wih\theta(t,\wth^1))\end{array}\right]\bw\,.
\end{equation}
Let $m_\mM$ be a $\wih\sigma$-ergodic measure concentrated on $\mM$.
Property {\bf p1} in Remark~\ref{3.notacambio} combined with property {\bf p4} in
Remark~\ref{3.notaextension} ensures that the family \eqref{3.ecdiag1} is also
in the weakly elliptic case; and properties {\bf p3} and {\bf p6} ensure that the
new flow $(\mM\!\times\P,\wit\vartheta_\mM^*,\R)$ given by the
family of angular equations
$\theta'=2\,\wih a^{\,1}(\wih\theta(t,\wth^1))\sin\theta\cos\theta$
(corresponding to \eqref{3.ecdiag1}),
admits an invariant measure absolutely continuous with respect to
$m_\mM\!\!\times\! l_\P$. Clearly, the set
$\{(\wth^1\!,0)\,|\,(\wth^1)\in\mM\}
\subset\mM\!\times\!\P$ is $\wit\vartheta_\mM^*$-minimal
(as $\{(\wth^1\!,\pi/2)\,|\,(\wth^1)\in\mM\}$). Proposition 3.3 of \cite{noob1}
allows us to ensure that there exists
a $\wih\sigma$-invariant
subset $\mM_1$ of $\mM$ with $m_\mM(\mM_1)=1$ and
a measurable function $m_a^*\colon\mM_1\to\R^+$ with
$(d/dt)\,m_a^*(\wih\theta(t,\wth^1))=-\wih a(\wih\theta(t,\wth^1))\,
m_a^*(\wih\theta(t,\wth^1))$ for all $(\wth^1)\in\mM_1$
such that $\mM\!\times\!\P$
decomposes into the disjoint union of the measurable
$\wit\vartheta_\mM^*$-invariant sets (ergodic 1-sheets)
$\mS_\eta^*:=\{(\wth^1\!,\varphi_\eta^*(\wth^1))\,|\;(\wth^1)\in\mM_1\}$
for $\eta\in(-\infty,\infty]$,
where $\varphi_\eta^*(\wth^1):=\text{arccot\,}(\eta\,(m_a^*)^2(\wth^1))$.
\par
Let us call $\W_0$ to the intersection of the projection of $\mM_1$
onto $\W$ with the set $\W_0^*$ for which \eqref{3.paluego1} holds, and note that
there $\W_0$ is $\sigma$-invariant with $m_\W(\W_0)=1$. We fix $\w_0\in\W_0$ and
choose $\theta_0^1\in\S$ with $(\w_0,\theta_0^1)\in\mM_1$.
Given two different points $\theta_1,\theta_2\in\P$, we
write $\theta_i=\varphi_{\eta_i}^*(\w_0,\theta_0^1)$ for $i=1,2$,
so that $\eta_1,\eta_2\in(-\infty,\infty]$ and $\eta_1\ne\eta_2$.
Note that the fiber component of $\wit\vartheta_\mM^*$
satisfies $\wit\theta^*_\mM(t,\w_0,\theta_0^1,\theta_i)=
\varphi^*_{\eta_i}(\wih\sigma(t,\w_0,\theta_0^1))$.
Note also that there cannot exist $\kappa_1,\kappa_2\in\R$ such that
$0<\kappa_1\le m_a^*(\wih\sigma(t,\w_0,\theta_0^1))
\le\kappa_2<\infty$ for all $t\ge 0$: otherwise
all the solutions of all the systems of the family \eqref{3.ecext}
would be bounded (see e.g.~Proposition A.1 in \cite{jnot}), which is not the case.
Assume that there exists a sequence $(s_n)\uparrow\infty$
such that $\lim_{n\to\infty}m_a^*(\wih\sigma(s_n,\w_0,\theta_0^1))=0$.
Then $\lim_{n\to\infty}\wit\theta^*_\mM(s_n,\w_0,\theta_0^1,\theta_i)=\pi/2$
if $\theta_i\ne 0$ (since $\eta_i\ne\infty$).
In this case we set $\mP_{\w_0}^*:=\P-\{0\}$.
If the previous property does not hold, then
$\lim_{n\to\infty}m_a^*(\wih\sigma(s_n,\w_0,\theta_0^1))=\infty$
for a sequence $(s_n)\uparrow\infty$, which in turn ensures that
$\lim_{n\to\infty}\wit\theta^*_\mM(s_n,\w_0,\theta_0^1,\theta_i)=0$ if
$\theta_i\ne\pi/2$ (since $\eta_i\ne 0$).
In this case we take $\mP_{\w_0}^*:=\P-\{\pi/2\}$.
In both cases we conclude that $\liminf_{t\to\infty}
\dist_\P(\wit\theta^*_\mM(t,\w_0,\theta_0^1,\theta_1),
\wit\theta^*_\mM(t,\w_0,\theta_0^1,\theta_2))=0$ if
$\theta_1,\theta_2\in\mP_{\w_0}^*$.
\par
The performed change of variables
induces a flow homeomorphism from $(\mM\!\times\P,\wit\vartheta_\mM,\R)$ to
$(\mM\!\times\P,\wit\vartheta_\mM^*,\R)$, given by
$\Phi^*(\wth^1\!,\theta):=(\w,\text{arccot\,}(\cot\theta-\cot\theta^1))$.
Let us define $\mP_{\w_0}:=\{\theta\in\P\,|\;
\Phi^*(\wth^1\!,\theta)\in\mP_{\w_0}^*\}$. Then $\P-\mP_{\w_0}$ contains
one element; and, since the fiber component
of $\wit\vartheta_\mM$ is $\wit\theta$ (see \eqref{3.flujovarP}),
we have $\liminf_{t\to\infty}
\dist_\P(\wit\theta(t,\w_0,\theta_1),\wit\theta(t,\w_0,\theta_2))=0$ if
$\theta_1,\theta_2\in\mP_{\w_0}$.
Combining this property with \eqref{3.paluego1} we conclude that
for all $\w_0\in\W_0$ and all $\theta_1,\theta_2\in\mP_{\w_0}$
with $\theta_1\ne\theta_2$, the points $(\w_0,\theta_1), (\w_0,\theta_2)$
form a Li-Yorke pair for $(\WP,\wit\sigma,\R)$. This completes the proof in this case,
and {\sc step 1}. We point out that the ergodic uniqueness of $(\W,\sigma,\R)$
has not been required, which will be fundamental in {\sc step 2}.
\vspace{.1cm}\par
{\sc Step 2}. We will now prove the statement in the general case.
The idea is: to reformulate the family in a new base on which we can find
a continuous change of variables taking it to a new family with triangular form
for which the hypotheses remain true and which fits in one
of the two situations analyzed in {\sc step 1};
to apply the results of that step;
and to show that the conclusions for the new base suffice to our purposes.
\par
We follow again the procedure described in Remark~\ref{3.notaextension}:
we fix a $\wih\sigma$-minimal set $\mM\subseteq\WS$ and a
$\wih\sigma$-ergodic measure $m_\mM$ concentrated on $\mM$,
and consider the family of systems \eqref{3.ecext} and the flows
$(\mM\!\times\!\S,\wih\vartheta_\mM,\R)$ and
$(\mM\!\times\!\P,\wit\vartheta_\mM,\R)$.
(For the moment being $\mM$ is any set with these properties;
later on we will need to be more precise with its choice.)
Recall that
$\mM^1\!:=\{(\wth^1\!,\theta^1)\,|\;
(\wth^1)\in\mM\}$ is a copy of the base for $\wih\vartheta_\mM$.
Let us now consider the change of variables
given on $\mM\!\times\R^2$ by $(\wth^1\!,\by)\mapsto(\wth^1\!,\bz)$ with
$\bz:=\lsm\cos\theta^1\!&-\sin\theta^1\!\\
\sin\theta^1\!&\;\;\cos\theta^1\!\rsm\by$,
which induces the rotation $(\wth^1\!,\theta)\mapsto(\wth^1\!,
\theta-\theta^1)$ on $\mM\!\times\S$.
Clearly, the minimal set $\mM^1\!$ is taken to
$\{(\wth^1\!,0)\,|\;(\wth^1)\in\mM\}$, which ensures that the
solutions of \eqref{3.eclineartraza0} are taken to those of a new
family of the form
\begin{equation}\label{3.ectrian}
\bz'=\left[\begin{array}{cc} \wih a^{\,1}(\wih\theta(t,\wth^1))&0\\
\wih c^{\,1}(\wih\theta(t,\wth^1))&
-\wih a^{\,1}(\wih\theta(t,\wth^1))\end{array}\right]\bz:
\end{equation}
the coefficient $\wih b^1\!$ is $0$ since the function 0
solves the corresponding equation \eqref{3.eqthetaext}; and
the trace of the new matrix is zero, as explained in Remark~\ref{3.notacambio}.
\par
Let us now consider the
new flow $(\mM\!\times\P,\wit\vartheta_\mM^*,\R)$ given by the
family of angular equations
$\theta'=-\wih c^{\,1}(\wih\theta(t,\wth^1))\sin^2\!\theta
+2\,\wih a^{\,1}(\wih\theta(t,\wth^1))\sin\theta\cos\theta$
(corresponding to \eqref{3.ectrian}).
As in {\sc step 1}, we can ensure
that the family \eqref{3.ectrian} is in the weakly elliptic case,
and that the flow $(\mM\!\times\P,\wit\vartheta_\mM^*,\R)$
admits an invariant measure which is absolutely continuous with respect to
$m_\mM\!\!\times\! l_\P$. In other words, the family \eqref{3.ectrian}
satisfies the hypotheses of the theorem, and it is triangular.
We will see that a suitable choice if $\mM$ makes it fit in one
of the two situations analyzed in {\sc step 1}.
\par
Assume first that
the flow $(\WP,\wih\sigma,\R)$ does not admit a positively distal pair.
Then, according to the last assertion in {\bf p5} (in Remark~\ref{3.notaextension})
and {\bf p2} (in Remark~\ref{3.notacambio}),
the flow $(\mM\!\times\P,\wit\vartheta^*_{\mM},\R)$
does not admit a positively distal pair, and hence it fits in the first situation
of {\sc step 1} (no matter the choice of $\mM$).
\par
Now let us assume that the flow $(\WP,\wih\sigma,\R)$
admits a positively distal pair. We will check that
$\mM$ can be chosen in such a way that $(\mM\!\times\P,\wit\vartheta_{\mM},\R)$
admits two different minimal sets. Hence so does
$(\mM\!\times\P,\wit\vartheta_{\mM}^*,\R)$, and consequently
this flow fits in the second situation analyzed in {\sc step 1}.
\par
Let the points $(\wth_1),(\wth_2)$ form a positively distal
pair for $(\WP,\wit\sigma,\R)$. Let us take a $\wit\sigma$-minimal set
$\wit\mM\subseteq\WP$ contained in the omega limit set of $(\wth_1)$,
and a point $(\w_*,\theta^1_*)\in\wit\mM$. Then we can choose
a sequence $(t_n)\uparrow\infty$ and a point $(\w_*,\theta^2_*)\in\WP$
such that
$(\w_*,\theta^i_*)=\lim_{n\to\infty}\wit\theta(t_n,\wth_i)$
for $i=1,2$. Is is clear that
$(\w_*,\theta_*^1),(\w_*,\theta^2_*)$ form a positively distal pair.
Let us take a $\wih\sigma$-minimal set $\mM\subseteq\WS$
projecting onto $\wit\mM$ such that $(\w_*,\theta_*^1)\in\mM$,
consider the flow $(\mM\!\times\!\P,\wit\vartheta_{\mM},\R)$
defined by \eqref{3.flujovarP}, and
note that $(\w_*,\theta^1_*,\theta^1_*),
(\w_*,\theta^1_*,\theta^2_*)$ form a positively distal pair for
this flow. Note also that $(\w_*,\theta_*^1,\theta_*^1)$ belongs to the
minimal set $\mM^1\!:=\{(\wth^1\!,\mathfrak{p}(\theta^1))\,|\;
(\wth^1)\in\mM\}$, which is a copy of the base. Now we
take a minimal set $\mM^2$ contained in the omega limit set
of $(\w_*,\theta_*^1,\theta_*^2)$ for the flow $\wit\vartheta_{\mM}$.
It is easy to deduce from
$\inf_{t\ge 0}\dist_{\P}(\wit\theta(t,\w_*,\theta_*^1),
\wit\theta(t,\w_*,\theta_*^2))>0$
that $(\w_*,\theta_*^1,\theta_*^1)\notin\mM^2$, so that $\mM^1\!\ne\mM^2$.
This proves our assertion.
\par
In both cases, we have checked in {\sc step 1} that
there exists a $\wih\sigma$-invariant
subset $\mM_0\subseteq\mM$ with $m_\mM(\mM_0)=1$
such that for every $(\wth^1)\in\mM_0$
there exists a subset
$\mP_{(\wth^1)}\subseteq\P$ with the next properties: $\P-\mP_{(\wth^1)}$ contains
at most one element;
and for every pair of different points $\theta_1,\theta_2\in\mP_{(\wth^1)}$,
the points $(\wth^1\!,\theta_1), (\wth^1\!,\theta_2)$ form a Li-Yorke pair for
$(\mM\!\times\P,\wit\vartheta^*\!,\R)$. We define $\W_0$ as the projection of $\mM_0$
on $\W$ and note that it is $\sigma$-invariant. In addition, since
$m_\mM$ projects onto $m_\W$, we have $m_\W(\W_0)=1$.
Given $\w\in\W_0$ we look for $(\wth^1)\in\mM_0$ and define
$\mP_\w:=\mP_{(\wth^1)}$. Finally, we use the information provided by {\bf p2}
and {\bf p5} to conclude that $\W_0$ and $\{\mP_\w\,|\,\w\in\W_0\}$
satisfy all the assertions of the theorem. The proof is complete.
\end{proof}
\begin{nota}\label{3.notapaluego}
Let us consider the family \eqref{3.ectrian} appearing in {\sc step 2}
of the previous proof, and a $\wih\sigma$-ergodic measure
$m_\mM$ concentrated on $\mM$.
As seen at the beginning of the proof of {\sc step 1}, the
associated flow $(\mM\!\times\P,\wit\vartheta_\mM^*,\R)$ decomposes
into ergodic 1-sheets: we can write
$\mM\!\times\P=\bigcup_{\eta\in(-\infty,\infty]}\mS_\eta^*$,
where $\mS_\eta^*$ is a $\wit\vartheta^*\!$-invariant set
of the form $\{(\wth^1\!,\varphi_\eta^*(\wth^1))\,|
\;(\wth^1)\in\mM\}$ for a measurable map $\varphi_\eta^*\colon\mM\to\P$.
And, in addition, due to the expression of $\varphi_\eta^*$
(namely, $\varphi_\eta^*(\wth^1)=
\text{arccot\,}(\eta\,m_a^2(\wth^1)+\cot\varphi_0(\wth^1))$
for certain measurable functions $m_a$ and $\varphi_0$),
Lusin's theorem provides a compact set $\mK\subseteq\mM$
with measure $m_\mM(\mK)>1/2$ such that the restrictions of
all the maps $\varphi_\eta^*$ to $\mK$ are continuous.
On the other hand, the map $\mM\!\times\!\P\to\mM\!\times\!\P,\,
(\wth^1,\theta)\mapsto (\wth^1,\theta+\theta^1)$
takes $(\mM\!\times\P,\wit\vartheta_\mM^*,\R)$
to $(\mM\!\times\P,\wit\vartheta_\mM,\R)$. We define now
$\varphi_\eta(\wth^1):=\varphi_\eta^*(\wth^1)+\theta^1\!$
and note: that the restrictions of
all the maps $\varphi_\eta$ to $\mK$ are continuous; and that
the associated flow $(\mM\!\times\P,\wit\vartheta_\mM,\R)$ decomposes
into the ergodic $1$-sheets $\mS_\eta:=\{(\wth^1\!,\varphi_\eta(\wth^1))\,|
\;(\wth^1)\in\mM\}$ for $\eta\in(-\infty,\infty]$.
This information will be used in the proof of Theorem~\ref{6.teormain}.
\end{nota}
\section{The boundaries of the global attractors for the flows induced by a
family of dissipative systems}\label{sec4}
Recall that $(\W,\sigma,\R)$ is a continuous global flow
on a compact metric space, minimal and uniquely ergodic, that
$m_\W$ is its only ergodic measure, and that
$l_\R^2$, $l_\S$ and $l_\P$ represent the Lebesgue measures on
$\R^2$, $\S$ and $\P$ (normalized in the last two cases).
In the rest of the paper we will work with a particular type of
family of nonlinear systems defined
along the $\sigma$-orbits, which we now describe. Given
a real value $\rho\in(0,1]$ and the $C^1$-map
\begin{equation}\label{4.defkrho}
 k_\rho\colon\R^+\!\to\R^+\!\,,\quad r\mapsto\left\{\begin{array}{ll}
 0&\text{if}\;\,0\le r\le\rho\,,\\[.1cm]
 (r-\rho)^2&\text{if}\;\,r\ge\rho\,,\end{array}\right.
\end{equation}
we consider the family of nonautonomous two-dimensional systems of ODEs
\begin{equation}\label{4.eqR2}
\by'=A(\wt)\,\by-k_\rho(|\by|)\,\by
\end{equation}
for $\w\in\W$, whose linear part
\begin{equation}\label{4.eqlinear}
 \by'=A(\wt)\,\by
\end{equation}
agrees with \eqref{2.eqlinear}.
A discrete version of this continuous model has been
studied in \cite{anjk}. The main difference in our approach is
that we put the focus on the unpredictability of the dynamics
on the attractor at the bifurcation point.
\par
We will use the notation established in Section
\ref{sec2} for the flows $(\W\!\times\!\R^2\!,\tau_{l,\R},\R)$,
$(\WS,\wih\sigma,\R)$ and $(\WP,\wit\sigma,\R)$ induced by \eqref{4.eqlinear}.
The value of $\rho$ will be fixed throughout the paper, so that we will not include
it in the notation.
The family \eqref{4.eqR2} also induces a (now local) skew-product flow with base
$(\W,\sigma,\R)$ on $\W\!\times\!\R^2\!$, defined by
\begin{equation}\label{4.deftaur}
 \tau_\R\colon\mU\subseteq\R\!\times\!\W\!\times\!\R^2\!\to\W\!\times\!\R^2\!\,,
 \quad(t,\w,\by_0)\mapsto(\wt,\by(t,\w,\by_0))\,,
\end{equation}
where $\by(t,\w,\by_0)$
represents the solution of the system \eqref{4.eqR2}$_\w$
with initial data $\by(0,\w,\by_0)=\by_0$.
Note that $\by(t,\w,-\by_0)=-\by(t,\w,\by_0)$.
Note also that \eqref{4.eqR2} and \eqref{4.eqlinear} agree as long as
$\by$ belongs to the Euclidean closed disk centered at the origin
and with radius $\rho$; but not outside this disk, where
\eqref{4.eqR2} is no longer linear. In particular, $\by(t,\w,\by_0)$
may not be globally defined;
and if $|\by_0|\le\rho$ then $\by(t,\w,\by_0)=\by_l(t,\w,\by_0)$
at the interval of points containing 0 at which $|\by(t,\w,\by_0)|\le\rho$.
\par
Now we take coordinates $y_1=r\sin\theta$ and $y_2=r\cos\theta$
and obtain the equations
\begin{equation}\label{4.polar}
\begin{split}
\theta'&=f(\wt,\theta)\,,\\
r'&=r\,\big(g(\wt,\theta)-k_\rho(r)\big),
\end{split}
\end{equation}
where $f\colon\WS\to\R$ and $g\colon\WS\to\R$ are given by
\eqref{2.deffz} and \eqref{2.defgr}.
Recall that $f(\wth)=f(\wth+\pi)$ and
$g(\wth)=g(\wth+\pi)$.
Observe that the family of equations given by the first line in
\eqref{4.polar} depends neither
on $\rho$ nor on $r$. In fact, it coincides with the family
\eqref{2.eqtheta}, so that its solutions define the global flows $(\WS,\wih\sigma,\R)$
and $(\WP,\wit\sigma,\R)$ given by \eqref{2.defsigma2} and \eqref{2.defsigma3}.
Note also that the $r$ component of the solution of \eqref{4.polar}$_\w$ with
initial data $(\theta,r_0)$ agrees
with the solution of the equation
\begin{equation}\label{4.eqr}
 r'=r\,\big(g(\wt,\wih\theta(t,\wth))-k_\rho(r)\big)
\end{equation}
with initial data $r_0$. We can consider this family varying
either on $\WS$ or on $\WP$, since $g(\wt,\wih\theta(t,\wth))=
g(\wt,\wit\theta(t,\wth))$. Let $r(t,\wth,r_0)$ be the solution
of \eqref{4.eqr}$_{(\wth)}$ with $r(t,\wth,r_0)=r_0\ge 0$, and note that
if $r_0\le\rho$ then $r(t,\wth,r_0)=r_l(t,\wth,r_0)$ at  the
interval containing 0 at which $r(t,\wth,r_0)\le\rho$.
We can define two new local skew-product flows
with bases $(\WS,\wih\sigma,\R)$ and
$(\WP,\wit\sigma,\R)$, given by
\[
 \wih\tau\colon\wih\mV\subseteq\R\!\times\!\WS\!\times\!\R^+\!\to
 \WS\!\times\!\R^+,\quad
 (t,\wth,r_0)\mapsto
 (\wt,\wih\theta(t,\wth),r(t,\wth,r_0))
\]
and
\[
 \wit\tau\colon\wit\mV\subseteq\R\!\times\!\WP\!\times\!\R^+\!\to
 \WP\!\times\!\R^+,\quad
 (t,\wth,r_0)\mapsto
 (\wt,\wit\theta(t,\wth),r(t,\wth,r_0))\,.
\]
Recall that
\begin{equation}\label{4.SP}
 \mkp(\wih\theta(t,\wth))=\wit\theta(t,\w,\mkp(\theta))
 \quad\text{and}\quad
 r(t,\wth,r_0)=r(t,\w,\mkp(\theta),r_0)\,.
\end{equation}
It is clear that
\[
 \by(t,\w,\by_0)=
 \left[\!\begin{array}{c} r(t,\wth,r_0)\sin(\wih\theta(t,\wth))\\
 r(t,\wth,r_0)\cos(\wih\theta(t,\wth))\end{array}\!\right]
 \quad\text{if }\;\by_0=\left[\!\begin{array}{c} r_0\sin\theta\\
 r_0\cos\theta\end{array}\!\right].
\]
Therefore the flows $\tau_{\R}$ and $\wih\tau$ are closely related. As a matter
of fact, they can be identified outside the (respectively invariant) sets
$\W\!\times\!\{\bcero\}\subset\W\!\times\!\R^2$ and $\WS\!\times\{0\}\subset
\WS\!\times\!\R^+\!$. In addition, $(\WS\!\times\!\R^+\!,\wih\tau,\R)$ projects onto
$(\WP\!\times\!\R^+\!,\wit\tau,\R)$.
\begin{nota}\label{4.notaconcavo}
For further purposes, we point out that the skew-product semiflow
$\wit\tau$ is {\em concave}; that is, its fiber component satisfies
\[
 r(t,\wth,\eta\,r_1+(1-\eta)\,r_2)\ge \eta\,r(t,\wth,r_1)+(1-\eta)\,r(t,\wth,r_2)
\]
for all $\eta\in[0,1]$, $(\wth)\in\WP$, $r_1, r_2\in\R^+$, and all the
values of $t\ge 0$
such that all the involved terms are defined. This assertion follows from
the fact that $k_\rho(\eta r_1+(1-\eta)\,r_2)\le
\eta\,k_\rho(r_1)+(1-\eta)\,k_\rho(r_2)$
for all $\eta\in[0,1]$ and $r_1, r_2\in\R^+$ combined with a standard argument
of comparison of solutions. And, of course, it is also
{\em monotone}: $0\le r(t,\wth,r_1)\le r(t,\wth,r_2)$ whenever $0\le r_1\le r_2$.
The monotonicity will be often used without further reference.
\end{nota}
Since the function $g$ of \eqref{4.eqr} (given by \eqref{2.defgr}) is bounded,
the definition \eqref{4.defkrho} of $k_\rho$ shows that,
if we fix any $\delta>0$, we can find a $r_\rho$ such that
\begin{equation}\label{4.defrrho}
 g(\wth)-k_\rho(r)<-\delta\quad\text{ for
all $(\wth)\in\WS\;$ and $\,r\ge r_\rho$}\,.
\end{equation}
This constant $r_\rho$ plays a fundamental role in the statement of the
next result: it will bound the zones of the phase spaces in which the
attractors lie. Note also that the minimal choice of $r_\rho$
satisfying \eqref{4.defrrho} increases as $\rho$ increases. This is the only
point in which the choice of a particular $\rho\in(0,1]$ has influence.
\begin{teor}\label{4.teoratract}
Let the constant $r_\rho>0$ satisfy \eqref{4.defrrho}.
\begin{itemize}
\item[\rm(i)] The flow $(\W\!\times\!\R^2\!,\tau_\R,\R)$ is bounded dissipative,
and it has a global attractor $\mA\subset\W\!\times\!\R^2$
which contains $\W\!\times\!\{\bcero\}$. In addition,
\[
\begin{split}
 \mA&=\{(\w,\by_0)\in\W\!\times\!\R^2\!\,|\;
 \sup_{t\in\R}|\by(t,\w,\by_0)|<\infty\}\\
 &=\{(\w,\by_0)\in\W\!\times\!\R^2\!\,|\;
 \sup_{t\in\R^-}|\by(t,\w,\by_0)|<\infty\}\,;
\end{split}
\]
$\mA\subseteq\W\!\times\{\by\in\R^2\!\,|\;|\by|\le r_\rho\}$; and $(\w,\by_0)\in\mA$
if and only if $(\w,-\by_0)\in\mA$.
\item[\rm(ii)] The flow $(\WS\!\times\!\R^+\!,\wih\tau,\R)$ is bounded dissipative,
and it has a global attractor
$\wih\mB\subset\WS\!\times\!\R^+\!$
which contains $\WS\!\times\!\{0\}$. In addition,
\[
\begin{split}
 \wih\mB&=\{(\wth,r_0)\in\WS\!\times\!\R^+\!\,|\;
 \sup_{t\in\R}r(t,\wth,r_0)<\infty\}\\
 &=\{(\wth,r_0)\in\WS\!\times\!\R^+\!\,|\;
 \sup_{t\in\R^-}r(t,\wth,r_0)<\infty\}\,;
\end{split}
\]
$\wih\mB\subseteq\WS\times[0,r_\rho]$; and
$(\wth)\in\wih\mB$ for $\theta\in[0,\pi)$ if and only if
$(\wth+\pi)\in\wih\mB$.
\item[\rm(iii)] The set
\[
 \wit\mB:=\{(\w,\mkp(\theta),r_0)\,|\;(\wth,r_0)
 \in\wih\mB\}\subseteq\WP\!\times[0,r_\rho]
\]
is a global attractor for the bounded dissipative
flow $(\WP\!\times\!\R^+,\wit\tau,\R)$.
\item[\rm(iv)] If $r_0>0$, then
$\big(\w,\lsm r_0\sin\theta\\ r_0\cos\theta\rsm\big)\in\mA$ if and only if
$(\wth,r_0)\in\wih\mB$. In addition, the dynamics of
$\wih\tau$ on $\wih\mB-(\WS\!\times\!\{0\})$ can be recovered from that
of $\tau_\R$ on $\mA-(\W\!\times\!\{\bcero\})$, and the converse
is also true.
\end{itemize}
\end{teor}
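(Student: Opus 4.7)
The plan is to reduce everything to the scalar radial estimate coming from \eqref{4.defrrho}, invoke the classical existence theorem for global attractors of bounded dissipative semiflows, and transfer information between the three flows via the polar identity $|\by(t,\w,\by_0)|=r(t,\wth,r_0)$ (with $r_0=|\by_0|$) and the projection $\mkp$ of \eqref{4.SP}.

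First, along any solution of \eqref{4.eqr}, at each $r\ge r_\rho$ one has $r'\le-\delta\,r$; hence the interval $[0,r_\rho]$ is forward invariant for the fiber component of $\wih\tau$, and any datum $r_0\le R$ is driven into it in time at most $(\ln R-\ln r_\rho)/\delta$ and kept there. Through the polar identity this simultaneously yields forward completeness of the a priori local flow $\tau_\R$ (so in fact $\tau_\R$ is a global forward semiflow); bounded dissipativity of $\tau_\R$ with absorbing set $\W\!\times\!\{|\by|\le r_\rho\}$; bounded dissipativity of $\wih\tau$ with absorbing set $\WS\!\times\![0,r_\rho]$; and, by pulling bounded subsets of $\WP\!\times\!\R^+$ back through the continuous surjection $\mkp\!\times\!\mathrm{Id}$, also bounded dissipativity of $\wit\tau$ with absorbing set $\WP\!\times\![0,r_\rho]$.

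Next I would invoke the classical theorem that every bounded dissipative semiflow on a locally compact metric phase space admits a compact global attractor, which coincides with the set of points whose backward semiorbit is globally defined and bounded. Applied to $\tau_\R$, $\wih\tau$ and $\wit\tau$ this gives $\mA$, $\wih\mB$, $\wit\mB$ together with both characterizations in (i) and (ii) (the bilateral statement following from the backward one via forward dissipativity). The inclusions $\mA\subseteq\W\!\times\!\{|\by|\le r_\rho\}$ and $\wih\mB\subseteq\WS\!\times\![0,r_\rho]$ are immediate from positive invariance of the respective absorbing sets, and $\W\!\times\!\{\bcero\}\subseteq\mA$, $\WS\!\times\!\{0\}\subseteq\wih\mB$ follow because those are already invariant compact sets. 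The symmetry in (i) is a direct consequence of the oddness of the right-hand side of \eqref{4.eqR2} in $\by$, giving $\by(t,\w,-\by_0)=-\by(t,\w,\by_0)$; the analogous statement in (ii) uses $r(t,\wth,r_0)=r(t,\wth+\pi,r_0)$ from \eqref{4.SP}.

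For (iii), the set $\wit\mB=(\mkp\!\times\!\mathrm{Id})(\wih\mB)$ is compact and $\wit\tau$-invariant because $\mkp\!\times\!\mathrm{Id}$ intertwines $\wih\tau$ and $\wit\tau$; attraction transfers by pulling any bounded $\wit\mC\subseteq\WP\!\times\!\R^+$ back through the two-sheeted covering, attracting the lift to $\wih\mB$, and pushing forward using uniform continuity on compacts. For (iv), uniqueness of solutions of \eqref{4.eqr} combined with the fact that $r\equiv 0$ is a solution forces $r(t,\wth,r_0)>0$ whenever $r_0>0$, so the polar change of variables is a bona fide flow conjugacy between $\tau_\R$ on $\W\!\times\!(\R^2\!-\!\{\bcero\})$ and $\wih\tau$ on $\WS\!\times\!(0,\infty)$ once one quotients by $(\theta,r)\sim(\theta+\pi,r)$, and the last assertion of (iv) is then a direct reading of this conjugacy. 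The only real technical point I expect to have to confirm carefully is that the classical attractor-existence theorem applies in this nonautonomous skew-product setting — essentially a bookkeeping issue, since forward completeness is supplied by the radial estimate and the base $\W$ is compact — and checking that attraction passes through the two-sheeted projection $\mkp$ in order to upgrade $\wit\mB$ from a compact invariant set to a bona fide global attractor.
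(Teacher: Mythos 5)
Your proposal is correct and follows essentially the same route as the paper: both use the radial estimate \eqref{4.defrrho} to produce the absorbing set $\W\!\times\!\{|\by|\le r_\rho\}$, appeal to the classical existence theorem for global attractors of bounded dissipative semiflows (with the attractor characterized as the set of bounded full, or backward, orbits), and transfer information among $\tau_\R$, $\wih\tau$, $\wit\tau$ via the polar identity and the projection $\mkp$ from \eqref{4.SP}. The only cosmetic difference is that you obtain $\mA\subseteq\W\!\times\!\{|\by|\le r_\rho\}$ from positive invariance of the absorbing set, while the paper argues directly that an orbit starting outside that disk must blow up backward; these are interchangeable.
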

\begin{proof}
It is easy to deduce from \eqref{4.defrrho} that
the maximal interval of definition of
$\by(t,\w,\by_0)$ contains $[0,\infty)$ for all $(\w,\by_0)\in\W\!\times\!\R^2\!$,
and that the set $\mC_\rho:=\W\times\{\by\in\R^2\,|\;|\by|\le r_\rho\}$
attracts any bounded set under $\tau_\R$. Or, equivalently,
that $r(t,\wth,r_0)$ is defined at least on $[0,\infty)$
for all $(\wth,r_0)\in\WS\!\times\R^+\!$ and that the set
$\wih\mC_\rho:=\WS\times[0,r_\rho]$ attracts any bounded set under $\wih\tau$.
Therefore both flows are bounded dissipative, and the classical theory
ensures that the sets $\mA$ and $\wih\mB$ of (i)
and (ii) are the respective global attractors: see e.g.~Section 2.4 of
\cite{hale4} and Section 1.2 of \cite{chlr}.
Relation \eqref{4.defrrho} also guarantees that
any globally bounded solution $\by(t,\w,\by_0)$ of
\eqref{4.eqR2} satisfies $|\by(t,\w,\by_0)|\le r_\rho$ for all $t\in\R$:
if $|\by_0|> r_\rho$, then \eqref{4.defrrho} would force
$|\by(t,\w,\by_0)|$ to tend to $\infty$ as $t$ tends to the left edge of the
maximal interval of definition.
so that it is not bounded; therefore $|\by_0|\le r_\rho$, and
hence we can use again \eqref{4.defrrho} to prove the assertion.
Consequently, $\mA\subseteq\mC_\rho$ and
$\wih\mB\subseteq\wih\mC_\rho$. And obviously
$\by(t,\w,\by_0)$ is globally bounded if and only if
$\by(t,\w,-\by_0)=-\by(t,\w,\by_0)$ is globally bounded, which completes
the proof of (i) and (ii). The assertions in (iii) follows from
(ii) and \eqref{4.SP}. Finally,
the properties sated in (iv) follow from (i) and (ii) and from the relation between
$\tau_\R$ and $\wih\tau$ explained before.
\end{proof}
\begin{teor}\label{4.teortapa}
Let $\mA\subset\W\!\times\!\R^2$, $\wih\mB\subset\WS\!\times\!\R^+\!$
and $\wit\mB\subset\WP\!\times\!\R^+\!$
be the global attractors of the flows $(\W\!\times\!\R^2\!,\tau_\R,\R)$,
$(\WS\!\times\R^+\!,\wih\tau,\R)$
and $(\WP\!\times\R^+\!,\wit\tau,\R)$, and let $r_\rho$
satisfy \eqref{4.defrrho}. Then, there exists an upper
semicontinuous map
\[
 \wit\beta\colon\WP\to[0,r_\rho]
\]
such that
\begin{itemize}
\item[\rm (i)] the attractors are given by
\[
\begin{split}
 \mA&=\bigcup_{\w\in\W}\{(\w,\by)\,|\;
 \by=\lsm r\sin\theta\\r\cos\theta\rsm\,
 \text{ for $\theta\in\S$ and $r\in[0,\wit\beta(\w,\mkp(\theta))]$}\}\,,\\
 \wih\mB&=\bigcup_{(\wth)\in\WS}\big(\{(\wth)\}\times
 [0,\wit\beta(\w,\mkp(\theta))]\big)\,,\\
 \wit\mB&=\bigcup_{(\wth)\in\WP}\big(\{(\wth)\}\times
 [0,\wit\beta(\wth))]\big)\,.
\end{split}
\]
\item[\rm(ii)] $\wit\beta(\wit\sigma(t,\wth))=r(t,\wth,\wit\beta(\wth))$
for all $(t,\wth)\in\R\!\times\!\WP$; that is, $\wit\beta$ is an equilibrium for
$\wit\sigma$.
\item[\rm(iii)] If there exists $\beta_0>0$
such that $\wit\beta(\wth)\ge\beta_0$ for all $(\wth)\in\WP$,
then $\wit\beta$ defines a uniformly stable equilibrium: for each
$\ep>0$ there exists $\delta(\ep)>0$ such that if $(\wth)\in\WP$ and
$0\le|\wit\beta(\wth)-r_0|\le\delta(\ep)$ then
\begin{equation}\label{4.r}
 |\wit\beta(\wit\sigma(t,\wth))-r(t,\wth,r_0)|\le\ep \quad
 \text{for all $t\ge 0$}\,.
\end{equation}
\end{itemize}
\end{teor}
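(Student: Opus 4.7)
The natural candidate is $\wit\beta(\wth):=\max\{r_0\ge 0:(\wth,r_0)\in\wit\mB\}$, which is well defined because $\wit\mB$ is compact and contains $(\wth,0)$ (the compact set $\WP\!\times\!\{0\}$ is $\wit\tau$-invariant, hence inside the attractor), and takes values in $[0,r_\rho]$ by Theorem~\ref{4.teoratract}(iii). The plan is to prove (i), (ii) and (iii) in that order, each step supplying what the next needs.

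For (i), the key step is to verify that every fiber $\wit\mB_{(\wth)}$ equals the full interval $[0,\wit\beta(\wth)]$. Fix $r_1\in(0,\wit\beta(\wth))$; since $(\wth,\wit\beta(\wth))\in\wit\mB$, its backward orbit is globally defined and bounded by $r_\rho$. Monotonicity (Remark~\ref{4.notaconcavo}) gives $r(t,\wth,r_1)\le r(t,\wth,\wit\beta(\wth))$ for all $t\ge 0$, and this extends to $t\le 0$ by a contradiction argument: a reversal of the inequality at some $t_0<0$ would, upon forward propagation from $t_0$, violate $r_1\le\wit\beta(\wth)$ at $t=0$. Hence $\sup_{t\le 0}r(t,\wth,r_1)<\infty$, so $(\wth,r_1)\in\wit\mB$ by the characterization transferred from Theorem~\ref{4.teoratract}(ii) via \eqref{4.SP}. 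The formulas for $\wit\mB$, $\wih\mB$ and $\mA$ then follow from Theorem~\ref{4.teoratract}(iv) and the polar identities \eqref{4.SP}. Upper semicontinuity of $\wit\beta$ is routine: if $(\w_n,\theta_n)\to(\wth)$ in $\WP$ and $\wit\beta(\w_n,\theta_n)\to L$, closedness of $\wit\mB$ yields $(\wth,L)\in\wit\mB$, so $L\le\wit\beta(\wth)$.

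For (ii), invariance of $\wit\mB$ immediately gives $r(t,\wth,\wit\beta(\wth))\le\wit\beta(\wit\sigma(t,\wth))$ for every $t\in\R$. Conversely, let $r_0$ be the (unique) preimage of $\wit\beta(\wit\sigma(t,\wth))$ under $r(t,\wth,\cdot)$; by invariance $(\wth,r_0)\in\wit\mB$, so $r_0\le\wit\beta(\wth)$, and forward monotonicity yields $\wit\beta(\wit\sigma(t,\wth))=r(t,\wth,r_0)\le r(t,\wth,\wit\beta(\wth))$, which is the required reverse inequality.

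For (iii), the plan is to exploit the concavity of $\wit\tau$ in the radial coordinate (Remark~\ref{4.notaconcavo}) using the trivial equilibrium $r\equiv 0$ as a second endpoint. For $r_0\in[0,\wit\beta(\wth)]$, writing $r_0=\eta\,\wit\beta(\wth)+(1-\eta)\cdot 0$ with $\eta=r_0/\wit\beta(\wth)$ and using $r(t,\wth,0)\equiv 0$ gives $r(t,\wth,r_0)\ge(r_0/\wit\beta(\wth))\,\wit\beta(\wit\sigma(t,\wth))$, whence
\[
 \wit\beta(\wit\sigma(t,\wth))-r(t,\wth,r_0)\le\frac{\wit\beta(\wit\sigma(t,\wth))}{\wit\beta(\wth)}\big(\wit\beta(\wth)-r_0\big)\le\frac{r_\rho}{\beta_0}\,\big(\wit\beta(\wth)-r_0\big).
\]
For $r_0>\wit\beta(\wth)$, writing instead $\wit\beta(\wth)=(\wit\beta(\wth)/r_0)\,r_0+(1-\wit\beta(\wth)/r_0)\cdot 0$ and applying concavity in the opposite direction yields $r(t,\wth,r_0)\le(r_0/\wit\beta(\wth))\,\wit\beta(\wit\sigma(t,\wth))$, hence the symmetric bound. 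Taking $\delta(\ep):=\ep\,\beta_0/r_\rho$ then produces \eqref{4.r}. The step I expect to require the most care is the backward-in-time monotonicity used in (i) and (ii); the entire argument there rests on the dissipative estimate \eqref{4.defrrho}, which ensures that the relevant backward orbits are globally defined and a priori bounded by $r_\rho$.
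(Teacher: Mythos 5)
Your proof is correct, and it reaches the same conclusions via a genuinely more self-contained route than the paper's. The paper constructs $\wit\beta$ as the pullback limit $\wit\beta(\wth)=\lim_{t\to\infty}r(t,\wit\sigma(-t,\wth),r_\rho)$ and then invokes Theorem~3.6 of \cite{nono2} to obtain, in one stroke, both the upper semicontinuity and the equilibrium property (ii); the characterization in (i) then falls out of the boundedness description of the attractor from Theorem~\ref{4.teoratract}. You instead define $\wit\beta$ intrinsically as the top of the fiber $\wit\mB_{(\wth)}$, which makes upper semicontinuity a one-line consequence of the compactness of $\wit\mB$, and you prove (i) and (ii) from first principles using only monotonicity, uniqueness (no crossing) of solutions of the scalar radial equation, and the invariance of $\wit\mB$. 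The two small spots that needed care in your version — that the backward orbit of $(\wth,r_1)$ stays defined and bounded for $r_1\in(0,\wit\beta(\wth))$, and that the preimage $r_0$ in (ii) actually exists — are both disposed of correctly by appealing to the compactness and invariance of $\wit\mB$. For (iii) the two proofs are essentially identical in content: the paper compares solutions via the pointwise inequality $k_\rho(\eta\,r)\le\eta\,k_\rho(r)$ for $\eta\in[0,1]$ (and the reverse for $\eta\ge 1$), while you package the same estimate through the concavity of the fiber flow recorded in Remark~\ref{4.notaconcavo}, applied with the trivial equilibrium $r\equiv 0$ as the other endpoint; both yield $|\wit\beta(\wit\sigma(t,\wth))-r(t,\wth,r_0)|\le (r_\rho/\beta_0)\,|\wit\beta(\wth)-r_0|$ and the same $\delta(\ep)=\ep\,\beta_0/r_\rho$. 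The trade-off is that your route avoids the external reference to \cite{nono2} at the cost of a bit more direct verification, whereas the paper's route is shorter once that structure theorem for monotone skew-product semiflows is granted.
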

\begin{proof}
Recall that the flow $(\WP\!\times\!\R^+,\wit\tau,\R)$ is induced by the
family of equations $r'=r\,\big(g(\wt,\wit\theta(t,\wth))-k_\rho(r)\big)$,
which agree with \eqref{4.eqr}. Relation \eqref{4.defrrho} ensures that
$0>r_\rho\,\big(g(\wih\theta(t,\wth))-k_\rho(r_\rho)\big)$, so that
the function $r(t)\equiv r_\rho$ is an upper solution for all these equations.
In addition, the set $\{r(t,\wth,r_\rho)\,|\;t\ge 0,\,(\wth)\in\WP\}$
is bounded, since $(\WP\!\times\!\R^+\!,\wit\tau,\R)$ is bounded dissipative.
In these conditions, Theorem 3.6 of \cite{nono2} (see also its proof,
which does not require the minimality of the base flow, in our case
$(\WP,\wit\sigma,\R)$) shows that
\[
 \wit\beta(\wth):=\lim_{t\to\infty}r(t,\wit\sigma(-t,\wth),r_\rho)
\]
defines an upper semicontinuous function satisfying $\wit\beta(\wth)\le r_\rho$
and property (ii).
In addition, if $r(t,\w_0,\theta_0,r_0)$ is globally bounded,
then Theorem \ref{4.teoratract}(i) ensures that
$r(-t,\w_0,\theta_0,r_0)\le r_\rho$ for all $t\ge 0$, so that
$r_0=r(t,\wit\sigma(-t,\w_0,\theta_0),r(-t,\w_0,\theta_0,r_0))
\le r(t,\wit\sigma(-t,\w_0,\theta_0),r_\rho)$
and hence $r_0\le\wit\beta(\w_0,\theta_0)$.
And conversely, since (ii) holds, any solution $r(t,\w_0,\theta_0,r_0)$
with $r_0\le\wit\beta(\w_0,\theta_0)$
is globally bounded. This and the descriptions of $\mA$, $\wih\mB$ and
$\wit\mB$ made in Theorem~\ref{4.teoratract} complete the proof of (i) and (ii).
\par
Let us prove (iii). Since $k_\rho\!(\eta\,r)$ is smaller than
$\eta\,k_\rho\!(r)$ for $r\ge 0$ and $\eta\in[0,1]$ and greater for
$r\ge 0$ and $\eta\ge 1$, a standard argument of comparison of
solutions and property (ii) ensure that
\[
\begin{split}
 \eta\,\wit\beta(\wit\sigma(t,\wth))\le r(t,\w,\eta\,\wit\beta(\wth)) \quad&
 \text{for all $t\ge 0$ and $(\wth)\in\WP$ if $\eta\in[0,1]$}\,,\\
 \eta\,\wit\beta(\wit\sigma(t,\wth))\ge r(t,\w,\eta\,\wit\beta(\wth)) \quad&
 \text{for all $t\ge 0$ and $(\wth)\in\WP$ if $\eta\ge 1$}\,.
\end{split}
\]
Given $\ep>0$ we take $\delta(\ep)=\ep\,\beta_0/r_\rho$, where $\beta_0$ satisfies
the assumption in (iii). Then,
if $|\wit\beta(\wth)-r_0|\le\delta(\ep)$ for a point $(\wth)\in\WP$,
we have $(1-\ep/r_\rho)\,\wit\beta(\wth)\le r_0\le
(1+\ep/r_\rho)\,\wit\beta(\wth)$. Combining this fact, the previous properties,
and (ii),
we get $(1-\ep/r_\rho)\,\wit\beta(\wit\sigma(t,\wth))\le r(t,\wth,r_0)
\le (1+\ep/r_\rho)\,\wit\beta(\wit\sigma(t,\wth))$ for all $t\ge 0$,
which together with $0\le\wit\beta(\wit\sigma(t,\wth))\le r_\rho$ yields \eqref{4.r}.
\end{proof}
It is clear that the properties of the semicontinuous map $\wit\beta$
of Theorem \ref{4.teortapa} determine the shapes of the three global attractors.
Let us see that $\wit\beta(\wth)>0$ if and only if the solutions $r_l(t,\wth,r_0)$ of
\eqref{2.eqrlinear}$_{(\wth)}$ with initial data $r_0>0$ are bounded. By linearity,
it is enough to consider the solutions $r_l(t,\wth,1)$.
\begin{prop}\label{4.propacotadas}
Let us fix $(\wth)\in\WP$, and let $\wit\beta$ be defined in
Theorem \ref{4.teortapa}.
Then $\wit\beta(\wth)>0$ if and only if $\sup_{t\le 0}r_l(t,\wth,1)<\infty$.
\end{prop}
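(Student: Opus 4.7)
The plan is to relate the linear and nonlinear radial flows by a Gronwall-type quotient identity and then exploit the fact that $k_\rho$ vanishes on $[0,\rho]$, so that small enough orbits of the nonlinear flow coincide exactly with their linear counterparts. Throughout, write $\phi(t):=r(t,\wth,r_0)$ (on its maximal interval) and $\psi(t):=r_l(t,\wth,r_0)$ (globally defined, linear). A direct computation from \eqref{2.eqrlinear} and \eqref{4.eqr} gives
\[
 \frac{d}{dt}\!\left(\frac{\psi}{\phi}\right) = \frac{\psi}{\phi}\,k_\rho(\phi)\,,\qquad \frac{\psi(0)}{\phi(0)}=1\,,
\]
so, as $k_\rho\ge 0$, the quotient is nondecreasing in $t$; in particular $\psi(t)\le\phi(t)$ for every $t\le 0$ in the common interval of definition.

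For the implication $\wit\beta(\wth)>0\Rightarrow\sup_{t\le 0}r_l(t,\wth,1)<\infty$, set $r_0:=\wit\beta(\wth)$. Theorem~\ref{4.teortapa} supplies $r(t,\wth,r_0)=\wit\beta(\wit\sigma(t,\wth))\le r_\rho$ for every $t\in\R$, so the comparison above forces $r_l(t,\wth,r_0)\le r_\rho$ for every $t\le 0$; linearity yields $r_l(t,\wth,1)=r_l(t,\wth,r_0)/r_0\le r_\rho/r_0<\infty$.

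For the converse, let $M:=\sup_{t\le 0}r_l(t,\wth,1)<\infty$ and pick $r_0\in(0,\rho/M)$, so that $\psi(t)=r_0\,r_l(t,\wth,1)<\rho$ for every $t\le 0$. Let $J=(\alpha,0]$ be the maximal backward interval of existence of $\phi$. I claim $\phi(t)<\rho$ on $J$: otherwise, continuity and $\phi(0)=r_0<\rho$ would produce a largest $t_1\in J\cap(-\infty,0)$ with $\phi(t_1)=\rho$ and $\phi<\rho$ on $(t_1,0]$; but on $[t_1,0]$ we would have $k_\rho(\phi)=0$, so $\phi$ would solve the same linear ODE as $\psi$ with the same datum at $t=0$, forcing $\phi\equiv\psi$ on $[t_1,0]$ and hence $\psi(t_1)=\rho$, a contradiction. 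Therefore $\phi\equiv\psi$ on all of $J$, $\phi$ stays below $\rho$, no backward blow-up occurs, and $J=(-\infty,0]$. Combined with forward boundedness from dissipativity (Theorem~\ref{4.teoratract}), the orbit of $(\wth,r_0)$ under $\wit\tau$ is globally bounded, so $(\wth,r_0)\in\wit\mB$ and hence $r_0\le\wit\beta(\wth)$ by Theorem~\ref{4.teortapa}(i); thus $\wit\beta(\wth)>0$. The main obstacle is precisely that the backward comparison $\psi\le\phi$ is useless for this converse, and backward blow-up of $\phi$ is a priori possible since dissipativity gives no backward bound; the confinement argument above sidesteps both issues at once by keeping $\phi$ in the region where the nonlinearity is silent.
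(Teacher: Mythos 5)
Your proof is correct and follows the same two directions as the paper: the forward implication via the comparison $r_l(t,\wth,r_0)\le r(t,\wth,r_0)$ for $t\le 0$ (which you package neatly as the monotonicity of the quotient $\psi/\phi$, the same computation the paper invokes as a ``standard comparison argument''), and the converse by choosing a small enough initial datum so the nonlinear orbit stays in the region $\{r\le\rho\}$ where $k_\rho$ vanishes, forcing $r\equiv r_l$ on $(-\infty,0]$ and hence backward boundedness. Your explicit confinement argument (taking $r_0$ strictly below $\rho/M$ and ruling out a first backward exit from $\{r<\rho\}$, with the attendant backward blow-up concern) makes fully rigorous a step the paper passes over in a single line, but the underlying idea is identical.
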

\begin{proof}
Recall that $\rho\in(0,1]$ is fixed from the beginning, and that
$\wit\beta$ takes values in $[0,r_\rho]$ with $r_\rho$ satisfying \eqref{4.defrrho}.
We fix $(\wth)\in\WP$ and assume that $\beta_0:=\wit\beta(\wth)>0$.
Since $k_\rho\ge 0$, a standard arguments of comparison of
solutions for scalar ODEs applied to \eqref{2.eqrlinear} and
\eqref{4.eqr}, and Theorem~\ref{4.teortapa}(ii), guarantee
\[
 \beta_0\,r_l(t,\wth,1)=
 r_l(t,\wth,\beta_0)\le r(t,\wth,\beta_0)=\wit\beta(\wit\sigma(t,\wth))\le r_\rho
 \quad\text{for all $t\le 0$}\,.
\]
Therefore $\sup_{t\le 0}r_l(t,\wth,1)<\infty$, as
asserted.
\par
Assume now that $\kappa:=\sup_{t\le 0}r_l(t,\wth,1)<\infty$,
so that $\sup_{t\le 0}r_l(t,\wth,\rho/\kappa)\le \rho$.
Hence $r(t,\wth,\rho/\kappa)=r_l(t,\wth,\rho/\kappa)\le \kappa$ for all $t\le 0$,
which together with the dissipativity of the flows ensures that
$r(t,\wth,\rho/\kappa)$ is globally defined and bounded.
Theorem \ref{4.teoratract}(ii)\&(iii) ensure that
$(\wth,\rho/\kappa)\in\wit\mB$, and hence Theorem~\ref{4.teortapa}(ii)
guarantees that $\wit\beta(\wth)\ge\rho/\kappa$.
This completes the proof.
\end{proof}
The previous result shows that the sets
\[
\begin{split}
(\WP)^+\!&:=\{(\wth)\in\WP\,|\;\wit\beta(\wth)>0\}\,,\\
(\WP)^0&:=\{(\wth)\in\WP\,|\;\wit\beta(\wth)=0\}
\end{split}
\]
can be characterized in terms of the existence of bounded solutions on $\R^-$ for
the family of equations~\eqref{2.eqrlinear}; or, equivalently, of the
family of systems \eqref{4.eqlinear}. Note also that
both sets are $\wit\sigma$-invariant, as Theorem~\ref{4.teortapa}(ii) guarantees.
\begin{notas}\label{4.notaopciones}
1.~Since two different points in $\P$ determine two
linearly independent solutions of \eqref{4.eqlinear}$_\w$, Proposition
\ref{4.propacotadas} shows that there are three possibilities for the sections
$(\WP)^+_\w$ and $(\WP)^0_\w$:
$(\WP)^+_\w\!=\P$ and $(\WP)^0_\w$ is empty;
$(\WP)^+_\w$ is empty and $(\WP)^0_\w=\P$; and
$(\WP)^+_\w\!=\{\theta_0\}$ and $(\WP)^0_\w=\P-\{\theta_0\}$.
In addition, if a point $\w$ is in one of these cases, the same
happens with all the points $\wt$ of its $\sigma$-orbit: for all
$\theta_0\in\P$ and all $t\in\R$ there exists a unique $\theta_{-t}\in\P$ with
$(\wt,\theta_0)=\wit\sigma(t,\wth_{-t})$, namely
$\theta_{-t}:=\wit\theta(-t,\wt,\theta_0)$.
\par
2. The upper semicontinuity of $\wit\beta$ ensures that it is continuous at
every point of a residual set $\mC\subseteq\WP$. In addition, since
$\wit\beta\ge 0$, $(\WP)^0\subseteq\mC$.
Assume now that $(\WP)^0$ contains a set $\mD$ which is dense in $\WP$.
It is easy to deduce that $\wit\beta$ vanishes at any point of $\mC$.
So that, in this case, the set $(\WP)^0$ is residual in $\WP$.
\end{notas}
We also define the set
\begin{equation}\label{4.defOmega+}
 \W^+\!:=\{\w\in\W\,|\;(\WP)^+_\w\!=\P\}=\{\w\in\W\,|\;\wit\beta(\wth)>0
 \text{ for all $\theta\in\P$}\}\,,
\end{equation}
which is $\wit\sigma$-invariant: see Remark~\ref{4.notaopciones}.1.
Proposition~\ref{4.propacotadas} shows that
$\W^+\!=\W$ if and only if all the solutions of
all the systems \eqref{4.eqlinear}
are bounded on $\R^-$. Let us check that, if this is not the case,
then $\W^+\!$ is of the first Baire category.
\begin{prop}\label{4.propBaire}
Suppose that $\W^+\!\ne\W$. Then $\wit\beta$ vanishes exactly at the
residual $\wit\sigma$-invariant set of
points of $\WP$ at which it is continuous.
In particular, the set $\W^+\!$ is of the first Baire category.
\end{prop}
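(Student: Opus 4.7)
The plan is to first show $(\WP)^0$ is dense in $\WP$, then invoke Remark~\ref{4.notaopciones} to obtain the first assertion, and finally cover $\W^+$ by a countable union of closed nowhere dense subsets of $\W$ to obtain the Baire category statement.

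By hypothesis there is some $\w_0\in\W-\W^+$. Since the three cases of Remark~\ref{4.notaopciones}.1 are $\sigma$-invariant, the entire $\sigma$-orbit of $\w_0$ lies in $\W-\W^+$, and this orbit is dense in $\W$ by minimality. For every such $\w=\sigma(t,\w_0)$, Remark~\ref{4.notaopciones}.1 gives that $(\WP)^0_\w$ equals $\P$ or $\P$ minus a point, hence is dense in $\P$. Approximating an arbitrary $(\w^*\!,\theta^*)\in\WP$ first in the base by a point of the dense orbit and then in the fiber by a point of $(\WP)^0_\w$ shows that $(\WP)^0$ meets every nonempty open subset of $\WP$. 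Applied with this dense set, Remark~\ref{4.notaopciones}.2 yields $\mC\subseteq(\WP)^0$ for the set $\mC$ of continuity points of $\wit\beta$; combined with $(\WP)^0\subseteq\mC$ already noted there, we get $(\WP)^0=\mC$. Since $\mC$ is residual in the Baire space $\WP$ (a standard consequence of upper semicontinuity) and $\wit\sigma$-invariant (as $\wit\sigma_t$ is a homeomorphism for each $t$), the first assertion follows.

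For the category statement, fix $N\ge 1$ and set $V_N:=\{(\wth)\in\WP\,|\;\wit\beta(\wth)<1/N\}$. Upper semicontinuity makes $V_N$ open, and since $(\WP)^0\subseteq V_N$ is dense, $V_N$ is dense in $\WP$. As the projection $\pi\colon\WP\to\W$ is open, $\pi(V_N)$ is open in $\W$; the density of $V_N$ transfers to density of $\pi(V_N)$ because any nonempty open $U\subseteq\W$ has $V_N\cap(U\!\times\!\P)\ne\emptyset$. Therefore $F_N:=\W-\pi(V_N)$ is closed with empty interior.

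The main obstacle is to show $\W^+\subseteq\bigcup_N F_N$, which requires a uniform-in-$\theta$ positive lower bound on $\wit\beta(\wth)$ whenever $\w\in\W^+$. To obtain it, note that Proposition~\ref{4.propacotadas} together with Remark~\ref{4.notaopciones}.1 ensures that every solution of \eqref{4.eqlinear}$_\w$ is bounded on $\R^-$; applied to the two columns of the fundamental matrix this gives $K(\w):=\sup_{t\le 0}\n{U(t,\w)}<\infty$. Since $r_l(t,\wth,1)=|U(t,\w)\lsm\sin\theta\\ \cos\theta\rsm|\le K(\w)$ uniformly in $\theta$, the final computation in the proof of Proposition~\ref{4.propacotadas} yields $\wit\beta(\wth)\ge\rho/K(\w)$ for every $\theta\in\P$. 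Any $N>K(\w)/\rho$ then places $\w$ outside $\pi(V_N)$, completing the inclusion $\W^+\subseteq\bigcup_N F_N$ and showing that $\W^+$ is of the first Baire category.
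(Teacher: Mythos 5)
Your argument is correct, and for the Baire category assertion it takes a genuinely different route from the paper. For the first assertion, both you and the paper proceed by producing a dense subset of $(\WP)^0$ from a single $\w_0\notin\W^+$ and then invoking Remark~\ref{4.notaopciones}.2; your version of the density argument is essentially equivalent to the paper's set $\mD=\{\wit\sigma(t,\w_0,\theta)\,|\;t\in\R,\,\theta\ne\theta_0\}$. For the second assertion, the paper observes that $(\WP)^0$ is residual in $\WP$ and projects exactly onto $\W-\W^+$, and then cites Proposition 3.1 of Veech \cite{veec4} to transfer residuality through the open projection $\WP\to\W$. You instead build an explicit cover of $\W^+$ by the closed nowhere dense sets $F_N=\W-\pi(V_N)$, exploiting the quantitative lower bound $\wit\beta(\wth)\ge\rho/K(\w)$ (uniform in $\theta$ for $\w\in\W^+$) that falls out of the proof of Proposition~\ref{4.propacotadas}. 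This is more self-contained: it replaces a nontrivial citation by an elementary countable-union argument. One small justification is a bit loose: you assert that the continuity set $\mC$ is $\wit\sigma$-invariant merely because $\wit\sigma_t$ is a homeomorphism, but that alone is not enough for the continuity set of a fixed map $\wit\beta$; what you actually need is the equilibrium relation $\wit\beta(\wit\sigma(t,\wth))=r(t,\wth,\wit\beta(\wth))$ from Theorem~\ref{4.teortapa}(ii) combined with continuity of $r$, or, more simply, the identification $\mC=(\WP)^0$ you have already established, since $(\WP)^0$ is manifestly $\wit\sigma$-invariant.
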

\begin{proof}
We will check below that $\wit\beta$ vanishes at
a dense set of points of $\WP$, which according to
Remark \ref{4.notaopciones}.2 ensures that the $\wit\sigma$-invariant set
$(\WP)^0$ is the residual set of continuity points of $\wit\beta$.
Since
$(\WP)^0$ projects onto $\W-\W^+\!$, this set is also residual
(see Proposition 3.1 of \cite{veec4}), which proves the second
assertion.
\par
Let us take $\w_0\notin\W^+\!$ and note that
$(\WP)^0_{\w_0}\supseteq\P-\{\theta_0\}$ for a point $\theta_0\in\P$
(see Remark~\ref{4.notaopciones}.1).
It follows easily from the minimality of $(\W,\sigma,\R)$ that the set
$\mD:=\{\wit\sigma(t,\w_0,\theta)\,|\;t\in\R\,,\theta\ne\theta_0\}$ is dense in $\WP$.
Finally, the $\wit\sigma$-invariance of $(\WP)^0$ ensures that
$\wit\beta$ vanishes at the points of $\mD$.
\end{proof}
We finish this section by relating the measure of $\W^+\!$
to those of the attractors.
\begin{prop}\label{4.propmedidaW+}
The attractor $\wit\mB$ of the flow $(\WP\!\times\!\R^+\!,\wit\tau,\R)$
has positive measure $m_\W\!\!\times\!l_\P\!\times\!l_\R$ on $\WP\!\times\!\R^+\!$
if and only if $m_\W(\W^+\!)=1$, where $\W^+\!$ is the $\sigma$-invariant set
defined by \eqref{4.defOmega+}.
\end{prop}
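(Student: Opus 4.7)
The plan is to convert the claim into a statement about the integral of $\wit\beta$ via Fubini, and then use the unique ergodicity of the base together with the trichotomy in Remark~\ref{4.notaopciones}.1 to establish each direction.

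First, I would combine the description of $\wit\mB$ from Theorem~\ref{4.teortapa}(i),
\[
 \wit\mB=\bigcup_{(\wth)\in\WP}\{(\wth)\}\times[0,\wit\beta(\wth)],
\]
with the upper-semicontinuity (hence Borel measurability) of $\wit\beta$ and Fubini's theorem applied to $m_\W\!\times\!l_\P\!\times\!l_\R$ on $\WP\!\times\!\R^+$. This yields
\[
 (m_\W\!\times\!l_\P\!\times\!l_\R)(\wit\mB)=\int_{\WP}\wit\beta(\wth)\,d(m_\W\!\times\!l_\P).
\]
Since $\wit\beta\ge 0$, the measure of $\wit\mB$ is positive if and only if $\{\wit\beta>0\}$ has positive $m_\W\!\times\!l_\P$-measure.

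Next, I would observe that $\W^+$ is $\sigma$-invariant (noted right after \eqref{4.defOmega+}) and Borel measurable: the closed set $(\WP)^0=\{\wit\beta=0\}$ (closed because $\wit\beta$ is upper semicontinuous and nonnegative) has closed projection $\W-\W^+$ by compactness of $\WP$, so $\W^+$ is open. Since $m_\W$ is the unique, hence ergodic, invariant measure for $(\W,\sigma,\R)$, we conclude $m_\W(\W^+)\in\{0,1\}$. This reduces the claim to showing the positive-measure condition of $\wit\mB$ is equivalent to $m_\W(\W^+)=1$.

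For the forward implication in the "if" direction, assume $m_\W(\W^+)=1$. For each $\w\in\W^+$ the section of $\{\wit\beta>0\}$ over $\{\w\}$ is all of $\P$, so Fubini gives $m_\W\!\times\!l_\P(\{\wit\beta>0\})=1$, and consequently the integral above is strictly positive. For the converse, assume $m_\W(\W^+)=0$, so $m_\W(\W-\W^+)=1$. Here is where the trichotomy from Remark~\ref{4.notaopciones}.1 is essential: for any $\w\notin\W^+$ the section $(\WP)^+_\w$ is either empty or a singleton, so
\[
 l_\P\big(\{\theta\in\P\,|\;\wit\beta(\wth)>0\}\big)=0
 \quad\text{for every }\w\notin\W^+.
\]
By Fubini applied again, $\int_{\WP}\wit\beta\,d(m_\W\!\times\!l_\P)=\int_{\W-\W^+}\big(\int_\P\wit\beta(\wth)\,dl_\P(\theta)\big)\,dm_\W(\w)=0$, so $\wit\mB$ has null measure.

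I do not anticipate a major obstacle: the key ingredients (Fubini, measurability of $\wit\beta$, ergodicity, and the fiber-trichotomy) are already available. The only point requiring a little care is the measurability of $\W^+$, which one handles either via the argument above using upper semicontinuity and compactness of $\WP$, or, if one wants to avoid the topological argument, by noting that $\W^+$ is the complement of the projection of the closed set $(\WP)^0$.
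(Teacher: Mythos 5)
Your proposal is correct and follows essentially the same route as the paper's proof: compute the measure of $\wit\mB$ as $\int_{\WP}\wit\beta\,d(m_\W\!\times\!l_\P)$ via Fubini, use the trichotomy of Remark~\ref{4.notaopciones}.1 to identify $\{\w\,|\;l_\P((\WP)^+_\w)>0\}$ with $\W^+$, and then invoke $\sigma$-invariance of $\W^+$ together with ergodicity of $m_\W$ to pass between $m_\W(\W^+)>0$ and $m_\W(\W^+)=1$. The only addition you make is the explicit verification that $\W^+$ is Borel (in fact open), which the paper leaves implicit.
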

\begin{proof}
It is clear that the measure of $\wit\mB$ is given by
$\int_{\WP}\wit\beta(\wth)\,d(m_\W\!\!\times\!l_\P)$, which agrees with
$\int_{(\WP)^+\!}\wit\beta(\wth)\,d(m_\W\!\!\times\!l_\P)$;
so that this measure is positive if and only if $(m_\W\!\!\times\!l_\P)((\WP)^+\!)>0$.
In turn, $(m_\W\!\!\times\!l_\P)((\WP)^+\!)=\int_\W l_\P((\WP)^+_\w\!)\,dm_\W$,
so that it is positive
if and only if the set of points $\w$ with $l_\P((\WP)^+_\w\!)>0$
has positive measure $m_\W$. It follows from Remark~\ref{4.notaopciones}.1 that
this set agrees with $\W^+\!$. And, since $\W^+\!$ is $\sigma$-invariant and
$m_\W$ is $\sigma$-ergodic, $m_\W(\W^+\!)>0$ is
equivalent to $m_\W(\W^+\!)=1$.
\end{proof}
\begin{nota}\label{4.notamedida}
Note that $(m_\W\!\!\times\!l_\P\!\times\!l_\R)(\wit\mB)>0$ is equivalent to
$(m_\W\!\!\times\!l_\S\!\times\!l_\R)(\wih\mB)>0$ and to
$(m_\W\!\!\times\!l_{\R^2\!})(\mA)>0$: see Theorems~\ref{4.teoratract}
and \ref{4.teortapa}.
\end{nota}
\section{The shape of the global attractors in terms of the
Sacker and Sell spectrum of the associated linear system}\label{sec5}
We continue in this section with the analysis of the shape and properties of
the global attractor $\mA$ (and hence of $\wih\mB$ and $\wit\mB$)
associated to the family of systems \eqref{4.eqR2} described
at the beginning of Section \ref{sec4}. As explained in the
Introduction, we will relate these properties to the characteristics
of the Sacker and Sell spectrum $\Sigma_A$ of the linear family \eqref{4.eqlinear}
(or of any one of its systems: see Definitions \ref{2.defEDsubfibrados} and
\ref{2.defSSS}, and Remark~\ref{2.notaED}.1).
We have also explained there that in this paper we will consider just three cases:
$\Sigma_A\subset(-\infty,0)$, $\Sigma_A\subset(0,\infty)$, and
$\Sigma_A=\{0\}$, and that this casuistic can be understood as a bifurcation
pattern.
\par
Recall that the boundaries of the attractors are determined by
the function $\wit\beta\colon\WP\to[0,r_\rho]$ of Theorem \ref{4.teortapa}.
\vspace{.2cm}\par\noindent
{\bf \hypertarget{A}{A}. The case $\boldsymbol{\Sigma_A\subset(-{\infty},0)}$.}
This first case is the simplest one.
We will check that the attractors $\mA$, $\wih\mB$ and $\wit\mB$ are trivial.
Note that the conclusion is irrespective
of the fact that $\Sigma_A$ reduces to a point, it is a nondegenerate interval,
or it is composed by two negative points, which are the three possibilities:
see Theorem~\ref{2.teorOsel}.
\begin{teor}\label{4.teorattrzero}
Suppose that $\Sigma_A\subset(-\infty,0)$. Then
$\mA=\{(\w,\bcero)\,|\;\w\in\W\}$, $\wih\mB=
\{(\wth,0)\,|\;(\wth)\in\WS\}$, and
$\wit\mB=\{(\wth,0)\,|\;(\wth)\in\WP\}$.
\end{teor}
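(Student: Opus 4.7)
The plan is to show that the equilibrium map $\wit\beta\colon\WP\to[0,r_\rho]$ constructed in Theorem~\ref{4.teortapa} vanishes identically, and then read off the three claimed identities from Theorem~\ref{4.teortapa}(i).

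To prove $\wit\beta\equiv 0$, I would invoke Proposition~\ref{4.propacotadas}, which reduces the question to showing that $\sup_{t\le 0}r_l(t,\wth,1)=\infty$ for every $(\wth)\in\WP$, where $r_l$ is the radial component of the linear flow. Here is where the spectral hypothesis enters: since $\Sigma_A\subset(-\infty,0)$, Remark~\ref{2.notarelac} tells us that the full bundle coincides with the stable subbundle, $F^+=\W\!\times\!\R^2$. By Definition~\ref{2.defEDsubfibrados} this means that there exist $c\ge 1$ and $\gamma>0$ such that $|U(t,\w)\,\by_0|\le c\,e^{-\gamma t}|\by_0|$ for all $t\ge 0$, $\w\in\W$ and $\by_0\in\R^2$. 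Applying this inequality at the point $\sigma(-s,\w)$ with time $t=s\ge 0$ and evaluating at $U(-s,\w)\,\by_0$, we get
\[
|\by_0|=|U(s,\sigma(-s,\w))\,U(-s,\w)\,\by_0|\le c\,e^{-\gamma s}|U(-s,\w)\,\by_0|,
\]
so $|U(-s,\w)\,\by_0|\ge c^{-1}e^{\gamma s}|\by_0|\to\infty$ as $s\to\infty$ whenever $\by_0\ne\bcero$. Translating this into the polar representation~\eqref{2.relx}, this yields $r_l(-s,\wth,1)\ge c^{-1}e^{\gamma s}$ for all $(\wth)\in\WP$, which is exactly the divergence we need. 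Proposition~\ref{4.propacotadas} then forces $\wit\beta(\wth)=0$ at every $(\wth)\in\WP$.

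With $\wit\beta\equiv 0$ in hand, Theorem~\ref{4.teortapa}(i) immediately gives $\wit\mB=\WP\!\times\!\{0\}$, $\wih\mB=\WS\!\times\!\{0\}$, and $\mA=\W\!\times\!\{\bcero\}$, which is the statement.

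I do not anticipate any substantial obstacle: the whole argument is a bookkeeping exercise combining the identification $F^+=\W\!\times\!\R^2$ (Remark~\ref{2.notarelac}) with the characterization of $\{\wit\beta>0\}$ via bounded negative semiorbits of the linear radial equation (Proposition~\ref{4.propacotadas}). The only mildly delicate point is the passage from forward exponential decay to backward exponential growth, which is the standard cocycle inversion shown above and does not require anything beyond the definition of exponential dichotomy; in particular the conclusion is indeed insensitive to whether $\Sigma_A$ is a single point, a nondegenerate interval, or a pair of negative points.
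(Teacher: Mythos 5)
Your proof is correct and follows essentially the same route as the paper's: both reduce the claim to showing that $\wit\beta\equiv 0$ via Proposition~\ref{4.propacotadas}, both identify $F^+=\W\!\times\!\R^2$ through Remark~\ref{2.notarelac}, and both read off the final identities from Theorem~\ref{4.teortapa}(i) and Theorem~\ref{4.teoratract}. The only cosmetic difference is how the backward blow-up of $r_l$ is obtained: you compute it directly from the exponential dichotomy estimate by the standard cocycle inversion $U(s,\sigma(-s,\w))\,U(-s,\w)=I_2$, giving an explicit lower bound $r_l(-s,\wth,1)\ge c^{-1}e^{\gamma s}$, whereas the paper combines $F^+=\W\!\times\!\R^2$ (forward decay) with Remark~\ref{2.notaED}.2 (no system has a nontrivial bounded solution) to conclude that solutions must be unbounded on $(-\infty,0]$. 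Both arguments are sound; yours is marginally more self-contained since it produces the exponential rate directly rather than invoking the abstract characterization of dichotomy via absence of bounded solutions.
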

\begin{proof}
It is enough to prove the assertion for $\wit\mB$: see
Theorem \ref{4.teoratract}. Note that the family
\eqref{4.eqlinear} has exponential dichotomy with $F^+=\W\!\times\!\R^2$:
see Remark~\ref{2.notarelac}. This and Remark~\ref{2.notaED}.2 ensure that
$\limsup_{t\to-\infty}r_l(t,\wth,r_0)=\infty$
for all $(\wth)\in\WP$ and $r_0>0$. Hence,
Proposition \ref{4.propacotadas} proves the assertion.
\end{proof}
\vspace{.2cm}\par\noindent
{\bf \hypertarget{B}{B}. The case
$\boldsymbol{\Sigma_A\subset(0,\boldsymbol{\infty})}$.}
Now we will show that, in the three cases of
$\Sigma_A\subset(0,\infty)$ (see Theorem~\ref{2.teorOsel}), the map
$\wit\beta$ is continuous and strictly positive on $\WP$, which means that
$\mA$ can be identified with a \lq\lq solid cylinder" which has
$\W$ as axis.
\begin{teor}\label{4.teorcopiaWS}
Suppose that $\Sigma_A\subset(0,\infty)$. Then,
\begin{itemize}
\item[\rm(i)]
the map $\wit\beta\colon\WP\to\R^+\!$ of Theorem \ref{4.teortapa}
is continuous and strictly positive.
\item[\rm(ii)] The compact set $\{(\wth,\wit\beta(\wth))\,|\;
(\wth)\in\WP\}$ is the global attractor for the flow $\wit\tau$ restricted
to $\WP\times(0,\infty)$. More precisely,
\[
\hspace{1.2cm}\lim_{t\to\infty}(\wit\beta(\wit\sigma(t,\wth))-r(t,\wth,r_0))=0
\quad\text{for all $(\wth)\in\WP$ and $r_0> 0$}\,.
\]
\item[\rm(iii)] The compact set $\Big\{\Big(\w,\lsm\wit\beta(\wth)\sin\theta\\
\wit\beta(\wth)\cos\theta\rsm\Big)\,|\;
(\wth)\in\WP\Big\}$ is the global attractor for the flow $\tau_\R$ restricted
to $\W\times(\R^2-\{\bcero\})$.
\end{itemize}
\end{teor}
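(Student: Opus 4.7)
The plan is to extract strict positivity of $\wit\beta$ from the exponential dichotomy guaranteed by $\Sigma_A\subset(0,\infty)$, to prove forward attraction to the graph of $\wit\beta$ via a monotone-ratio argument, and to deduce continuity from a uniform pullback characterization. For (i), Remark~\ref{2.notarelac} gives $F^-=\W\times\R^2$, so there exist constants $c\ge 1$ and $\gamma>0$ with $r_l(t,\wth,1)\le c\,e^{\gamma t}$ for all $t\le 0$ and $(\wth)\in\WP$. Hence $\kappa(\wth):=\sup_{t\le 0}r_l(t,\wth,1)\le c$ uniformly, and the explicit bound $\wit\beta(\wth)\ge\rho/\kappa(\wth)$ extracted in the proof of Proposition~\ref{4.propacotadas} yields the uniform lower bound $\wit\beta(\wth)\ge\rho/c>0$. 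This in turn activates Theorem~\ref{4.teortapa}(iii), providing the uniform stability of $\wit\beta$ that will be used in the continuity step.

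For (ii), I would set $\beta(t):=\wit\beta(\wit\sigma(t,\wth))$ and $r(t):=r(t,\wth,r_0)$ with $r_0>0$, and study the ratio $\eta(t):=r(t)/\beta(t)$, well defined thanks to the uniform lower bound on $\wit\beta$. A direct calculation using \eqref{4.eqr} and the equilibrium identity of Theorem~\ref{4.teortapa}(ii) yields
\[
 \frac{\eta'(t)}{\eta(t)}=k_\rho(\beta(t))-k_\rho(r(t)),
\]
so, by monotonicity of $k_\rho$, $\eta$ is nondecreasing in $(0,1]$ if $r_0\le\wit\beta(\wth)$ and nonincreasing in $[1,\infty)$ if $r_0\ge\wit\beta(\wth)$. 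In either case $\eta(t)\to\eta_\infty$ exists, and the main obstacle is to rule out $\eta_\infty\ne 1$. Integrating the identity gives $\ln\eta_\infty-\ln\eta(0)=\int_0^\infty(k_\rho(\beta)-k_\rho(r))\,ds$. On the other hand, dividing $\beta'/\beta=g-k_\rho(\beta)$ by $t$, using the uniform bounds on $\beta$ and the other half of the dichotomy $r_l(t,\wth,1)\ge c^{-1}e^{\gamma t}$ for $t\ge 0$, I would deduce $\liminf_{t\to\infty}(1/t)\int_0^t k_\rho(\beta(s))\,ds\ge\gamma>0$. Therefore $\beta(s)$ exceeds $\rho+\delta_0$ on a set of positive asymptotic density for some $\delta_0>0$; a quantitative convexity bound for $k_\rho$ on $[\rho,\infty)$ combined with $|r(s)-\beta(s)|$ staying bounded away from zero whenever $\eta_\infty\ne 1$ (again using the uniform bounds on $\beta$) then gives a uniform positive lower bound for $k_\rho(\beta)-k_\rho(r)$, producing divergence of the integral and the desired contradiction. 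Uniformity of the resulting attraction on sets $\WP\times[a,b]$ with $a>0$, required for the global-attractor claim in (ii), follows from the monotonicity of $\eta$ in $t$, the uniform stability of $\wit\beta$, and a standard compactness argument.

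Continuity of $\wit\beta$ then follows by a uniform pullback characterization. Applying the uniform forward attraction of (ii) with $r_0=r_\rho$ to the point $\wit\sigma(-t,\wth)$ gives
\[
 \sup_{\wth\in\WP}\bigl|r(t,\wit\sigma(-t,\wth),r_\rho)-\wit\beta(\wth)\bigr|\to 0
 \quad\text{as }t\to\infty,
\]
so $\wit\beta$ is the uniform limit of the continuous functions $\wth\mapsto r(t,\wit\sigma(-t,\wth),r_\rho)$ and is therefore continuous. This completes (i). Finally, (iii) is a direct translation of (i) and (ii) via the polar-coordinate correspondence of Theorem~\ref{4.teoratract}(iv): the image of the graph of $\wit\beta$ under the map $(\wth,r)\mapsto\big(\w,\lsm r\sin\theta\\ r\cos\theta\rsm\big)$ is compact by continuity of $\wit\beta$, is $\tau_\R$-invariant because of the equilibrium property of $\wit\beta$, and attracts every bounded subset of $\W\times(\R^2-\{\bcero\})$ by the attraction property in (ii).
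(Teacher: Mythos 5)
Your route is genuinely different from the paper's. The paper first constructs an auxiliary lower semicontinuous equilibrium $\alpha$ (as an increasing pullback limit of $\delta_\rho$) and then invokes the general machinery of monotone concave skew-product semiflows — specifically Theorem~3.8 of \cite{nuos4}, which guarantees that a minimal set strongly above $0$ is a uniformly exponentially stable copy of the base — to show $\alpha=\wit\beta$, first on minimal subsets and then globally by approximation; the forward attraction in (ii) then falls out of that framework. You instead argue directly at the ODE level: you get the uniform bound $\rho/c\le\wit\beta\le r_\rho$ from the dichotomy, observe that along orbits $\beta(t):=\wit\beta(\wit\sigma(t,\wth))$ solves the same scalar equation as $r(t,\wth,r_0)$ so that $(\ln\eta)'=k_\rho(\beta)-k_\rho(r)$ for $\eta=r/\beta$, and use the lower dichotomy estimate $r_l(t,\wth,1)\ge c^{-1}e^{\gamma t}$ together with $\ln r_l(t,\wth,1)=\int_0^t g$ and the two-sided bound on $\beta$ to force $(1/t)\int_0^t k_\rho(\beta)\,ds\to\gamma>0$, which rules out $\eta_\infty\ne1$. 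Your approach is more elementary and more quantitative (it effectively gives a rate), at the cost of some case-checking in the convexity estimate for $k_\rho(\beta)-k_\rho(r)$; the paper's approach is heavier but plugs into a reusable general theory.

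Two points need more care than you give them. First, the monotonicity/convergence of $\eta$ and the quantity $\int_0^\infty(k_\rho(\beta)-k_\rho(r))\,ds$ only prove the pointwise limit in (ii); your continuity proof, however, needs the convergence $r(t,\wth,r_\rho)\to\wit\beta(\wit\sigma(t,\wth))$ to be \emph{uniform} in $(\wth)$. You attribute this to ``the monotonicity of $\eta$ in $t$, the uniform stability of $\wit\beta$, and a standard compactness argument,'' but the naive compactness argument does not go through because at this stage $\wit\beta$ is only upper semicontinuous, so $r(T,\cdot,r_\rho)-\wit\beta(\wit\sigma(T,\cdot))$ is merely lower semicontinuous and its sublevel sets need not be open. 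The uniformity is nevertheless true, but the way to see it is inside your own estimates: the constants in $\int_0^t k_\rho(\beta)\,ds\ge\gamma t-C$ and the bound $\ln\eta(0)=\ln(r_\rho/\wit\beta(\wth))\le\ln(c\,r_\rho/\rho)$ are independent of $(\wth)$, so the density of $\{s\le t:\beta(s)>\rho+\delta_0\}$ is bounded below by a fixed $p_0>0$ for $t\ge T_0$, and the resulting linear-in-$t$ lower bound on $\int_0^t(k_\rho(r)-k_\rho(\beta))\,ds$ while $\eta>1+\ep$ gives a uniform time $T_\ep$ with $\eta(t,\wth)\le1+\ep$ for $t\ge T_\ep$. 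Once this is spelled out, the pullback step and the deduction of (iii) via Theorem~\ref{4.teoratract}(iv) are fine. Second, the identity $\eta'/\eta=k_\rho(\beta)-k_\rho(r)$ relies on $\beta(t)$ being differentiable, which is true not because $\wit\beta$ is smooth but because Theorem~\ref{4.teortapa}(ii) expresses $\beta(t)=r(t,\wth,\wit\beta(\wth))$ as a solution of \eqref{4.eqr}; it is worth saying so explicitly, since $\wit\beta$ itself is at this point only upper semicontinuous.
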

\begin{proof}
(i) The property $\Sigma_A\subset(0,\infty)$
ensures that the family of linear systems
\eqref{4.eqlinear} has exponential dichotomy, with $F^-\!=\W\!\times\!\R^2$:
see Remark~\ref{2.notarelac}.
More precisely, there exist $c\ge 1$ and $\gamma>0$ such that
\[
 r_l(t,\wth,r_0)\le c\,e^{\gamma\,t}r_0
 \quad\text{ for all $\,(\wth)\in\WP$, $\,r_0\ge 0$, \,and $\,t\le 0$}\,.
\]
We take $\delta_\rho=\rho/c>0$. Then
$r_l(t,\wth,r_0)\le\rho$ for $t\le 0$ if $r_0\le\delta_\rho$,
so that
\[
 r(t,\wth,r_0)=r_l(t,\wth,r_0)\le c\,e^{\gamma\,t}r_0
 \quad\text{ for all $(\wth)\in\WP\,$ and $\,t\le 0$}\,.
\]
We will deduce from this fact that there exists
$t_\rho>0$ such that
\begin{equation}\label{5.subequi}
 r(t,\wth,\delta_\rho)\ge\delta_\rho \quad\text{ for all
 $\,(\wth)\in\WP\,$ and $\,t\ge t_\rho$}\,.
\end{equation}
Assume for contradiction the existence of sequences $(t_n)\uparrow\infty$ in $\R^+\!$
and $((\w_n,\theta_n))$ in $\WP$ such that
$r(t_n,\w_n,\theta_n,\delta_\rho)=:r_n<\delta_\rho$. Then, on the one hand,
\[
 r(-t_n,\wit\sigma(t_n,\w_n,\theta_n)),\delta_\rho)>
 r(-t_n,\wit\sigma(t_n,\w_n,\theta_n)),r_n)=\delta_\rho
\]
for all $n\in\N$;
and, on the other hand, $0<r(-t_n,\wit\sigma(t_n,\w_n,\theta_n)),\delta_\rho)\le
c\,e^{-\gamma\,t_n}\delta_\rho$, which tends to $0$ as $n$ increases.
This contradiction proves \eqref{5.subequi}. And the same argument proves that,
given any $r_0>0$, there exists $t_{r_0,\rho}$ such that
\begin{equation}\label{5.subequi1}
 r(t,\wth,r_0)\ge\delta_\rho \quad\text{ for all
 $\,(\wth)\in\WP\,$ and $\,t\ge t_{r_0,\rho}$}\,.
\end{equation}
\par
A fundamental consequence derives from \eqref{5.subequi}. Let us define
the sequence of continuous functions $(\alpha_n)$
by
\[
 \alpha_n\colon\WP\to\R^+\!\,,\quad
 (\wth)\mapsto r(nt_\rho,\wit\sigma(-nt_\rho,\wth),\delta_\rho)\,.
\]
Note that all these functions are bounded from below by $\delta_\rho$ and from above
by $\rho$.
We can reason as in the proof of Theorem 3.6 of \cite{nono2} in order to prove: that
\[
 r(t_\rho,\wth,\alpha_n(\wth))\ge\alpha_n(\wit\sigma(t_\rho,\wth))
 \quad\text{ for all $\,(\wth)\in\WP\,$ and $\,n\ge 1$}\,;
\]
that the sequence $(\alpha_n(\wth))_n$ increases with $n$ for all $(\wth)
\in\WP$; that the limit
\[
 \alpha(\wth):=\lim_{n\to\infty}\alpha_n(\wth)
\]
satisfies
\begin{equation}\label{5.alfa}
 \alpha(\wit\sigma(mt_\rho,\wth))=r_l(mt_\rho,\wth,\alpha(\wth))
 \quad\text{ for all $\,m\in\N\,$ and $\,(\wth)\in\WP$}\,;\,
\end{equation}
and that $\alpha$ is a lower semicontinuous map: if $(\wth)=
\lim_{n\to\infty}(\w_n,\theta_n)$, then
$\alpha(\wth)\le\liminf_{n\to\infty}\alpha(\w_n,\theta_n)$.
Note also that $\delta_\rho\le\alpha\le r_\rho$.
\par
On the other hand,
$\wit\beta(\wth)=
\lim_{n\to\infty}r(nt_\rho,\wit\sigma(-nt_\rho,\wth),r_\rho)$
(see the proof of Theorem~\ref{4.teortapa}),
so that $\wit\beta\ge\alpha\ge\delta_\rho>0$. We will check that, as
a matter of fact, they agree, which together with the semicontinuity
properties of $\wit\beta$ and $\alpha$ shows the continuity of
$\wit\beta$ and completes the proof.
\par
We will make this in two steps. In the first one we
assume that $(\WP,\wit\sigma,\R)$ is minimal. Let
$(\w_0,\theta_0)$ be a continuity point of $\wit\beta$.
Theorem 3.6 of \cite{nono2} proves that
\[
\begin{split}
 \mK:&=\cls_{\WP\!\times\!\R^+}\{\wit\tau(t,\w_0,\theta_0,\wit\beta(\w_0,\theta_0))
 \,|\;t\ge 0\}\\
 &=\cls_{\WP\!\times\!\R^+}\{(\wit\sigma(t,\w_0,\theta_0),
 \wit\beta(\wit\sigma(t,\w_0,\theta_0)))\,|\;t\ge 0\}
\end{split}
\]
is a $\wit\tau$-minimal set. The upper semicontinuity of $\wit\beta$
ensures that $r_0\ge r\ge\delta_\rho$ for
all $(\wth,r)\in\mK$. In the words of \cite{nuos4}, the minimal
$\mK$ is strongly above the equilibrium 0. In addition,
\eqref{5.subequi1} also ensures that any
$\wit\tau$-minimal different from that given by 0 is contained in
$\{(\wth,r)\,|\;r\ge \delta_\rho\}\subset\WP\!\times\!\R^+\!$.
Recall that the flow $\wit\tau$ is monotone and
concave: see Remark~\ref{4.notaconcavo}.
In these conditions, Theorem 3.8 of \cite{nuos4} ensures
that $\mK$ is a uniformly exponentially stable copy
of the base which attracts all the forward semiorbits starting above zero.
That is, we can write $\mK=\{(\wth,c(\wth))\,|\;(\wth)\in\WP\}$ for a continuous map
$c\colon\WP\to\R^+$; and given $\ep>0$ there exists $\eta(\ep)>0$
such that if $r_0>0$ and $|r_0-c(\wth)|\le\eta(\ep)$, then
$|r(t,\wth,r_0)-c(\wit\sigma(t,\wth))|\le\ep$ for all $t\ge 0$.
It is very easy to deduce from the
definition of $\mK$ that $c$ and $\wit\beta$ agree
at the continuity points of $\wit\beta$. Now we will check that
$\wit\beta$ and $c$ agree everywhere: we fix $(\wth)\in\WP$ and any $\ep>0$;
choose a sequence $(s_n)\downarrow-\infty$ such that
$(\w_0,\theta_0)=\lim_{n\to\infty}\wit\sigma(s_n,\w,\theta)$;
deduce from $\wit\beta(\w_0,\theta_0)=c(\w_0,\theta_0)$ that
there exists $s_n$ with $|\wit\beta(\wit\sigma(s_n,\w,\theta))-
c(\wit\sigma(s_n,\w,\theta))|\le\eta(\ep)$; and conclude that
\[
 |\wit\beta(\wth)-c(\wth)|=
 |r(-s_n,\ws_n,\wit\beta(\wit\sigma(s_n,\w,\theta)))-
 c(\wit\sigma(-s_n,\wit\sigma(s_n,\w,\theta)))|\le\ep\,,
\]
which means that $\wit\beta(\wth)=c(\wth)$.
Note that this proves that $\wit\beta$ is continuous in the
minimal case; but our goal is to prove that it agrees with $\alpha$,
which will be required in the general case.
\par
The continuity and positiveness of $\wit\beta$ allows us to
find $\eta\in(0,1)$ such that $0<\eta\wit\beta\le\delta_\rho $, so that
$\eta\wit\beta\le\alpha$. In addition, as said in the proof
of Theorem \ref{4.teortapa},
$\eta\,\wit\beta(\wit\sigma(t,\wth))\le r(t,\w,\eta\,\wit\beta(\wth))$
for all $t\ge 0$ and $(\wth)\in\WP$. In these conditions,
Theorem 3.6 of \cite{nono2} and its proof ensure that
the map $\gamma(\wth):=\lim_{t\to\infty}r(t,\wit\sigma(-t,\wth),
\eta\beta(\wit\sigma(-t,\wth)))$ defines a new equilibrium above $0$.
We can repeat all the procedure before performed with $\wit\beta$
in order to conclude that $\gamma$ is continuous. But
Theorem 3.8 of \cite{nuos4} ensures that $\wit\beta$ is the
only strictly positive continuous equilibrium.
That is, $\gamma=\wit\beta$. Consequently,
\[
\begin{split}
 \beta(\wth)&=\lim_{n\to\infty}r(nt_\rho,
 \wit\sigma(-nt_\rho,\wth),\eta\beta(\wit\sigma(-nt_\rho,\wth)))\\
 &\le\lim_{n\to\infty}r(nt_\rho,
 \wit\sigma(-nt_\rho,\wth),\alpha(\wit\sigma(-nt_\rho,\wth)))=\alpha(\wth)\,,
\end{split}
\]
which completes the proof in this case.
\par
Let us now consider the general case. We already know that $\alpha$ and
$\wit\beta$ agree over each $\wit\sigma$-minimal subset $\mM\subset\WP$.
We fix $(\wth)\in\WP$, take a minimal subset $\mM$ contained in its
alpha limit set for $\wit\sigma$,
choose a point $(\bar\w,\bar\theta)$
in $\mM$, take a sequence $(s_n)\downarrow-\infty$ such that
$(\bar\w,\bar\theta)=\lim_{n\to\infty}\wit\sigma(s_n,\wth)$, and assume without
restriction that $s_n=m_nt_\rho-\mu_n$ with $-m_n\in\N$ and
$\mu_n\in[0,t_\rho)$ and that
there exists $\wit\mu:=\lim_{n\to\infty}\mu_n\in[0,t_\rho]$. The continuity of
$\wit\sigma$ allows us to ensure that $\lim_{n\to\infty}\wit\sigma(m_nt_\rho,\wth)=
\wit\sigma(\wit\mu,\bar\w,\bar\theta)=:(\wit\w,\wit\theta)\in\mM$. In particular,
$\alpha(\wit\w,\wit\theta)=\wit\beta(\wit\w,\wit\theta)$,
Now we fix $\ep>0$; take $\delta(\ep)$ satisfying \eqref{4.r};
assume without restriction that the sequences
$(\alpha(\wit\sigma(m_nt_\rho,\wth)))$ and $(\wit\beta(\wit\sigma(m_nt_\rho,\wth)))$
converge; use the inequalities
\[
 0=\wit\beta(\wit\w,\wit\theta)-\alpha(\wit\w,\wit\theta)
 \ge \lim_{n\to\infty}(\wit\beta(\wit\sigma(m_nt_\rho,\wth))-
 \alpha(\wit\sigma(m_nt_\rho,\wth))\ge 0
\]
in order to find $n_*$ such that
\[
 0\le \wit\beta(\wit\sigma(m_{n_*}t_\rho,\wth))-\alpha(\wit\sigma(m_{n_*}t_\rho,\wth))
 \le\delta(\ep)\,;
\]
and combine Theorem~\ref{4.teortapa}(ii), \eqref{5.alfa}  and \eqref{4.r}
to deduce that
\[
\begin{split}
 0\le \wit\beta(\wth)-\alpha(\wth)
 &=r(-m_{n_*}t_\rho,\wit\sigma(m_{n_*}t_\rho,\wth),
 \wit\beta(\wit\sigma(m_{n_*}t_\rho,\wth)))\\
 &\quad-r(-m_{n_*}t_\rho,\wit\sigma(m_{n_*}t_\rho,\wth),
 \alpha(\wit\sigma(m_{n_*}t_\rho,\wth)))
 \le\ep\,.
\end{split}
\]
This completes the proof of the equality also in the general case.
\vspace{.2cm}\par
(ii) Let us take $(\wth,r_0)\in\WP\times(0,\infty)$. Recall that there
exists $t_0$ such that
$r(t,\wth,r_0)\ge\delta_\rho>0$ for all
$(\wth)\in\WP$ and all $t\ge t_0$: see \eqref{5.subequi1}.
We look for a $\wit\sigma$-minimal
set $\mM\subseteq\WP$ contained in the omega limit set of $(\wth)$,
and a sequence $(t_n)\uparrow\infty$ such that there exists
$\lim_{n\to\infty}(\wit\sigma(t_n,\wth),
r(t_n,\wth,r_0))=:(\wit\w,\wit\theta,\wit r)\in\mM\!\times\R^+\!$. In particular,
$\wit r\ge\delta_\rho>0$. We fix $\ep>0$ and take
$\delta(\ep)$ satisfying \eqref{4.r}.
The convergence to $\wit\beta$ in the minimal case
explained in the previous step allows us to take $\wit s>0$ with
$|r(\wit s,\wit\w,\wit\theta,\wit r)-
\wit\beta(\wit\sigma(\wit s,\wit\w,\wit\theta))|\le\delta(\ep)/3$.
And we also look for $t_m$ such that
$|r(t_m+\wit s, \wth,r_0)-r(\wit s,\wit\w,\wit\theta,
\wit r)|\le\delta(\ep)/3$ and $|\wit\beta(\wit\sigma(\wit s+t_m,\wth))-
\wit\beta(\wit\sigma(\wit s,\wit\w,\wit\theta))|\le\delta(\ep)/3$.
These inequalities and \eqref{4.r} ensure that
$|r(t,\wth,r_0)-\wit\beta(\wit\sigma(t,\wth))|\le\ep$ for all $t\ge\wit s+t_m$,
which proves (ii).
\vspace{.2cm}\par
(iii) This last assertion is an immediate consequence of (ii) and
Theorem~\ref{4.teoratract}.
\end{proof}
{\bf \hypertarget{C}{C}. The case $\boldsymbol{\Sigma_A=\{0\}}$.}
 In this case the family \eqref{4.eqlinear} does not admit
exponential dichotomy, so that at least one of its systems admits a nontrivial
bounded solution.
We begin by considering the simplest situation
(which is also the least interesting for the purposes of this paper):
when all the solutions are bounded, the attractor
$\mA$ is homeomorphic to a solid cylinder with
continuous boundary. This is proved in Theorem \ref{5.teortodasacot}.
Before formulating it, we explain some properties
used in its proof and also in Section~\ref{sec6}.
\begin{defi}\label{5.defprimac}
A continuous  function $e\colon\W\to\R$ {\em admits a continuous primitive}
if there exists a continuous function $h_e\colon\W\to\R$ such that
$h_e(\wt)-h_e(\w)=\int_0^t e(\ws)\,ds$ for all $\w\in\W$ and $t\in\R$.
\end{defi}
\begin{nota}\label{6.notasolouno}
If $e$ admits a continuous primitive, then
$\sup_{(t,\w)\in\R\!\times\!\W}\left|\int_0^t e(\ws)\,ds\right|<\infty$, and
Birkhoff's ergodic theorem ensures that
$\int_{\W}e(\w)\,dm_\W=0$.
It is well-known that if $(\W,\sigma,\R)$ is minimal (as in our case)
then $e$ admits a continuous primitive if and only if there exists $\w_0\in\W$ with
$\sup_{t\ge 0}\left|\int_0^t e(\w_0\pu s)\,ds\right|<\infty$ or with
$\sup_{t\le 0}\left|\int_0^t e(\w_0\pu s)\,ds\right|<\infty$:
see e.g.~Proposition A.1 in \cite{jnot}.
\end{nota}
Now we rewrite the matrix $A=\lsm a&b\\c&d\rsm$ of the families
\eqref{4.eqlinear} and \eqref{4.eqR2} as
\begin{equation}\label{5.defe}
 A(\w)=e(\w)\,I_2+\wit A(\w)\quad \text{for
 $\,e(\w):=(1/2)\tr A(\w)$}\,,
\end{equation}
so that $\wit A(\w)=\lsm \wit a&\;b\\c&-\wit a\rsm$. We consider
the family of linear systems with zero trace
\begin{equation}\label{5.eclineartraza01}
 \by'=\wit A(\wt)\,\by
\end{equation}
for $\w\in\W$.
The notation established in Section \ref{sec3} will be used.
\begin{notas}\label{5.notasame}
Recall that we assume $\Sigma_A=\{0\}$. Let $e$ be given by \eqref{5.defe}.
\par
1.~The flows $(\WS,\wih\sigma,\R)$ and
$(\WP,\wit\sigma,\R)$ induced by \eqref{5.eclineartraza01}
agree with those induced by the initial linear family
$\by'=A(\wt)\,\by$: they are defined by \eqref{2.defsigma2} and
\eqref{2.defsigma3}, since the
function $f$ appearing in the angular equations
\eqref{2.eqtheta} and \eqref{3.eqtheta} is the same
(which is due to $a-d=2\wit a$).
\par
2.~Recall that $r_l(t,\wth,1)$ is the solution
of the linear equation \eqref{2.eqrlinear}$_{(\wth)}$ with $r_l(0,\wth,1)=1$, and
let $\wit r_l(t,\wth,1)$ be the solution with $\wit r_l(0,\wth,1)=1$
of the $r$-equation for \eqref{5.eclineartraza01}, namely
\[
 r'=r\,\big(g(\wt,\wit\theta(t,\wth))-e(\wt)\big)=
 r\left(\!-\frac{1}{2}\:\frac{\partial f}{\partial\theta}
 (\wt,\wit\theta(t,\wth))\right).
\]
It is very easy to check that $r_l(t,\wth,1)=\wit r_l(t,\wth,1)\exp\big(\!\int_0^t
e(\ws)\,ds\big)$ and that
$\wit r_l(t,\wth,1)=\exp\big(\!\int_0^t(-1/2)(\partial f/\partial\theta)
(\wit\sigma(s,\wth))\,ds\big)$.
\par
3.~Since $\Sigma_A=\{0\}$, Theorem~\ref{2.teorOsel}
ensures that $\lim_{t\to\infty}(1/t)\ln\det U(t,\w)=0$ for all $\w\in\W$,
and this, relation \eqref{2.reldetU} and
Birkhoff's ergodic theorem yield $\int_\W e(\w)\,dm_\W=0$.
This property, the previous remark and Birkhoff's ergodic theorem yield
$\lim_{t\to\infty}(1/t)\ln r_l(t,\wth,1)=\lim_{t\to\infty}(1/t)\ln
\wit r_l(t,\wth,1)$; and this
and Theorem~\ref{2.teorOsel} ensure
that the Sacker and Sell spectrum of \eqref{5.eclineartraza01}
is also $\{0\}$.
\par
4.~According to \eqref{2.reldetU},
\begin{equation}\label{5.reldetU}
\det U(t,\w_0)=\exp\left(\int_0^t \tr A(\w_0\pu s)\,ds\right)=
\exp\left(\int_0^t 2e(\w_0\pu s)\,ds\right).
\end{equation}
Assume that all the solutions of \eqref{4.eqlinear} are bounded. Then
\eqref{5.reldetU} combined with Theorem A.2 of \cite{jnot} ensures that
$e$ has a continuous primitive, which together with
Remark~\ref{5.notasame}.2
ensures that all the solutions of \eqref{5.eclineartraza01} are also bounded.
\end{notas}
\begin{teor}\label{5.teortodasacot}
Suppose that $\Sigma_A=\{0\}$ and that all the solutions of all the
linear systems \eqref{4.eqlinear} are bounded. Then
the map $\wit\beta\colon\WP\to\R^+\!$ of Theorem \ref{4.teortapa}
is continuous and strictly positive.
\end{teor}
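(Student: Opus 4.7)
The plan is to argue in three stages: strict positivity, a uniform lower bound, and continuity. Strict positivity is immediate from the hypothesis: for every $(\wth)\in\WP$ the trajectory $r_l(\cdot,\wth,1)$ is bounded on $\R^-$, and Proposition~\ref{4.propacotadas} then gives $\wit\beta(\wth)>0$.

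Next I would produce a constant $M\ge 1$ with $\|U(t,\w)\|\le M$ for every $(t,\w)\in\R\!\times\!\W$; combined with $\det U(t,\w)\ne 0$ this forces $r_l(t,\wth,1)\in[1/M,M]$ uniformly on $\R\!\times\!\WP$, and the computation in the proof of Proposition~\ref{4.propacotadas} then yields $\wit\beta\ge\rho/M=:\beta_0>0$ throughout $\WP$. Remarks~\ref{5.notasame}.2 and \ref{5.notasame}.4 let me first reduce to the zero-trace family, where $\det U\equiv 1$ and hence, for the spectral norm, $\|U(t,\w)^{-1}\|=\|U(t,\w)\|$. The lower semicontinuous function $F(\w):=\sup_{t\in\R}\|U(t,\w)\|$ is finite everywhere by hypothesis, so Baire's theorem places nonempty interior inside some closed sublevel set $\{F\le N_0\}$; the cocycle identity together with $\|U^{-1}\|=\|U\|$ gives $F(\wt)\le F(\w)^2$; and minimality of $(\W,\sigma,\R)$, which sends every $\w$ into that interior in finite time, upgrades the pointwise estimate to the uniform bound $F\le N_0^2$.

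For continuity, since $\beta_0\le\wit\beta\le r_\rho$, Theorem~\ref{4.teortapa}(iii) provides uniform stability of $\wit\beta$ as a $\wit\tau$-equilibrium. I then introduce the lower semicontinuous envelope
\[
 \wit\beta_*(\wth):=\liminf_{(\w',\theta')\to(\wth)}\wit\beta(\w',\theta')\in[\beta_0,r_\rho],
\]
and check, via the continuity of $r(t,\cdot,\cdot)$, the monotonicity of the semiflow (Remark~\ref{4.notaconcavo}) and the equilibrium identity for $\wit\beta$, that $\wit\beta_*$ is itself a $\wit\tau$-equilibrium and satisfies $\wit\beta_*\le\wit\beta$. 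The problem is reduced to proving $\wit\beta_*\equiv\wit\beta$. Setting $h:=\wit\beta-\wit\beta_*$ and using the monotonicity of $r\mapsto r\,k_\rho(r)$ on $\R^+$, one obtains the differential inequality
\[
 \frac{d}{dt}h(\wit\sigma(t,\wth))\le g(\wt,\wit\theta(t,\wth))\,h(\wit\sigma(t,\wth)),
\]
hence $h(\wit\sigma(t,\wth))\le M\,h(\wth)$ for every $t\in\R$. Combining this two-sided bound with the concavity of $\wit\tau$ (which comes from $k_\rho(\eta r)\le\eta k_\rho(r)$ for $\eta\in[0,1]$) and with the uniform stability of $\wit\beta$, and repeating the final squeeze used at the end of the proof of Theorem~\ref{4.teorcopiaWS}(i) to collapse any putative gap between two strictly positive equilibria, should force $h\equiv 0$ and hence continuity.

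The main obstacle is precisely this last squeeze. In Theorem~\ref{4.teorcopiaWS} the hypothesis $\Sigma_A\subset(0,\infty)$ supplies exponential dichotomy, and hence, via Theorem~3.8 of \cite{nuos4}, exponential attraction together with uniqueness of the strictly positive continuous equilibrium. In the present weakly elliptic situation only uniform, non-exponential stability is available, so the uniqueness of the positive equilibrium has to be extracted instead by combining the concavity of $\wit\tau$ with the uniform two-sided control of the linear cocycle produced by $\det U\equiv 1$.
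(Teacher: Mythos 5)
Your first two stages are sound: strict positivity is indeed immediate from Proposition~\ref{4.propacotadas}, and the reduction to the zero-trace family followed by a Baire/cocycle argument for a uniform bound on $|U(t,\w)|$ is a correct (if slightly different in flavor from the paper's) way to get the two-sided control on the linear cocycle. But the third stage has a genuine gap, which you yourself identify as ``the main obstacle'' and then do not close: the squeeze argument cannot work as sketched, because the uniqueness principle you appeal to is simply \emph{false} in this regime. Since $k_\rho$ vanishes identically on $[0,\rho]$, in the minimal case any scaled continuous primitive $\eta\,h$ with $\eta$ small enough that $\eta\,h\le\rho$ is a strictly positive continuous $\wit\tau$-equilibrium, and these are pairwise distinct; so there are uncountably many strictly positive continuous equilibria below $\wit\beta$ (the paper exhibits them as $h_\eta$, $\eta\le\eta_0$). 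Uniform stability, concavity, and the two-sided cocycle bound do not distinguish between $\wit\beta$ and these lower equilibria, so ``collapsing any putative gap between two strictly positive equilibria'' is the wrong target; what actually has to be proved is that the \emph{upper} boundary equilibrium is continuous, which is exactly the content of Theorem 3.8(v) of \cite{nuos4} that the paper invokes (a nontrivial structural theorem for monotone concave skew-product semiflows admitting a copy of the base strongly above $0$). Also, your asserted two-sided bound $h(\wit\sigma(t,\wth))\le M\,h(\wth)$ for all $t\in\R$ only follows for $t\ge 0$ from the stated differential inequality; for $t<0$ it gives the reverse lower bound.

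A second, independent omission: you implicitly treat $(\WP,\wit\sigma,\R)$ as minimal throughout, but under the hypotheses it need not be. When all solutions of the zero-trace family are bounded and $\WP$ is not $\wit\sigma$-minimal, it decomposes into uncountably many minimal sets, each an $m$-cover of $\W$ for a common $m$. The paper has to treat this case separately: the minimal-case argument gives continuity of $\wit\beta$ on each minimal set, and one then checks that $\alpha:=\rho/\wit\beta=\sup_{t\in\R}r_l(t,\cdot,1)$ is uniformly Lipschitz in the $\P$-variable (using the uniform cocycle bound) and uses the continuity of $\w\mapsto\mM^0_\w$ in the Hausdorff topology to glue continuity across minimal sets. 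Your proposal does not address this case at all, so even with a correct continuity argument on a minimal set the proof would be incomplete.
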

\begin{proof}
Proposition~\ref{4.propacotadas} ensures that $\wit\beta(\wth)>0$ for all
$(\wth)\in\WP$. Let us assume for the moment being that the flow
$(\WP,\wit\sigma,\R)$ is minimal. Then, since all the solutions
of the family of equations $r'=r\,g(\wit\sigma(t,\wth))$
are bounded, there exists a continuous function $h\colon\WP\to(0,\infty)$
such that $(d/dt)h(\wit\sigma(t,\wth))=h(\wth)\,g(\wit\sigma(t,\wth))$:
see e.g.~Proposition A.1 in \cite{jnot}.
Let us call $h_\eta:=\eta\,h$ for $\eta\ge 0$. We look for $\eta>\eta_0$
such that $h_\eta(\wth)\le\rho$ for all $(\wth)\in\WP$, where $\rho$ is the
constant in \eqref{4.defkrho}. Then
$(d/dt)\,h_\eta(\wit\sigma(t,\wth))=h_\eta(\wth)\,\big(g(\wit\sigma(t,\wth))-
k_\rho(h_\eta(\wit\sigma(t,\wth)))\big)$, so that $h_\eta$ determines a copy
of the base for $(\WP\!\times\!\R^+\!,\wit\tau,\R)$ which is
strongly above 0. In addition, this flow is monotone and concave:
see Remark~\ref{4.notaconcavo}. In these conditions, Theorem 3.8(v)
of \cite{nuos4} ensures that $\wit\beta$ is continuous.
\par
We will now check that, in the minimal case, $\wit\beta=h_{\eta_0}$, where
$\eta_0$ is determined by $\sup_{\wth\in\WP}h_{\eta_0}(\wth)=\rho$.
We already know that $h_{\eta_0}\le\wit\beta$. Let us choose $\eta>0$
and check that $\beta<h_\eta$, from where the assertion follows.
For this $\eta$, $(d/dt)\,h_\eta(\wit\sigma(t,\wth))\ge
h_\eta(\wth)\,\big(g(\wit\sigma(t,\wth))-
k_\rho(h_\eta(\wit\sigma(t,\wth)))\big)$, and the inequality
is strict at least at a point $(\wth)\in\WP$. Now we combine
Propositions 4.4 and 4.3 and Theorem 3.6 of \cite{nono2}
in order to deduce that there exists a $\delta>0$ and an equilibrium
$\gamma\le h_\eta-\delta$ such that any point $(\wth,r)$ with
$\gamma(\wth)<r<h_\eta(\wth)$ does not belong to any
copy of the base. A new application of Theorem 3.8(v) of \cite{nuos4}
ensures that $\beta<h_\eta$, as asserted.
It follows easily from $\wit\beta=h_{\eta_0}$ that
\begin{equation}\label{5.igualrho}
\sup_{t\in\R}\wit\beta(\wit\sigma(t,\wth))=\rho\quad\text{for all
$(\wth)\in\WP$}\quad\text{if $\WP$ is $\wit\sigma$-minimal}\,.
\end{equation}
\par
Let us now suppose that $\WP$ is not $\wit\sigma$-minimal. Then the assumed
boundedness of the solutions of the linear family ensures that
$\WP$ is the union of an uncountable family
of $\wit\sigma$-minimal sets, each one of them being an $m$-cover of $\W$
for a common $m\ge 1$. This is proved in the
proof of Theorem 3.1 of \cite{obpa}, since according to Remark~\ref{5.notasame}.4
all the solutions of \eqref{5.eclineartraza01} are bounded.
Note that $\wit\beta$ is continuous over each minimal set.
We define $\alpha(\wth):=\sup_{t\in\R}r_l(t,\wth,1)$ and
deduce from Theorem~\ref{4.teortapa}(ii), the
decomposition of $\WP$ into minimal sets and \eqref{5.igualrho} that
$\alpha(\wth)=(1/\wit\beta(\wth))\,\sup_{t\in\R}\wit\beta(\wit\sigma(t,\wth))=
\rho/\wit\beta(\wth)$. Consequently, $\alpha$ is also continuous
over each minimal set, and the global continuity of $\wit\beta$
will be guaranteed once we have checked that $\alpha$ is continuous on $\WP$.
\par
Note now that
\[
\begin{split}
 &|\alpha(\wth_1)-\alpha(\wth_2)|
 =\left|\,\sup_{t\in\R}\big|U(t,\w)\lsm\sin\theta_1\\\cos\theta_1\rsm\!\big|
 -\sup_{t\in\R}\big|U(t,\w)\lsm\sin\theta_2\\\cos\theta_2\rsm\!\big|\,\right|\\
 &\quad\;\le\sup_{t\in\R}\big|\,U(t,\w)\big(\lsm\sin\theta_1\\\cos\theta_1\rsm
 -\lsm\sin\theta_2\\\cos\theta_2\rsm\!\big)\big|
 \le c\,\big|\!\lsm\sin\theta_1\\\cos\theta_1\rsm-\lsm\sin\theta_2\\\cos\theta_2\rsm\!\big|
 \le \wit c\;\dist_{\P}(\theta_1,\theta_2)
 \end{split}
\]
for all $\w\in\W$ and $\theta_1,\theta_2\in\P$,
where $c:=\sup_{(t,\w)\in\R\!\times\W}|U(t,\w)|<\infty$. (As usual,
$|U|$ represents the Euclidean matrix operator norm.)
Let us take
a sequence $((\w_n,\theta_n))$ in $\WP$ with limit $(\w_0,\theta_0)$. Let
$\mM^0$ be the minimal set containing $(\w_0,\theta_0)$. Then there exists a
sequence $(\w_n,\varphi_n)$ in $\mM^0$ with limit $(\w_0,\theta_0)$.
This assertion can be proved combining the continuity of the map
$\w\mapsto\mM^0_\w$ in the Hausdorff topology of the set of compact subsets
of $\P$ (see e.g.~Theorem 3.3 of \cite{noos4}) with the fact that
$\mM^0_\w$ always contains $m$ elements.
In particular, $\lim_{n\to\infty}\dist_\P(\theta_n,\varphi_n)=0$.
The previous bound ensures that
$\lim_{n\to\infty}|\alpha(\w_n,\varphi_n)-\alpha(\w_n,\theta_n)|=0$,
and the continuity of $\alpha$ on $\mM^0$ yields
$\lim_{n\to\infty}|\alpha(\w_0,\theta_0)-\alpha(\w_n,\varphi_n)|=0$.
Hence, $\lim_{n\to\infty}|\alpha(\w_0,\theta_0)-\alpha(\w_n,\theta_n)|=0$,
which completes the proof.
\end{proof}
Apart from the situation described in Theorem \ref{5.teortodasacot},
the casuistic for $\Sigma_A=\{0\}$ is large. One of the
less trivial cases involves the occurrence of Li-Yorke chaos, which occurs
under some additional conditions. We will explain this
assertion, main goal of the paper, in Section~\ref{sec6}.
\subsection{A simple quasiperiodic example}
\textcolor{black}{
With the aim of clarifying the ideas, we will apply the previous
results to the nonautonomous system
\begin{equation}\label{5.eqquasi}
 \by'=\left[\!\!\begin{array}{cc}\ep&\cos t+\sin(\sqrt{2}t)\\
 -\cos t-\sin(\sqrt{2}t)&\ep\end{array}\right]\by
 - k_{0.5}(|\by|)\,\by\,,
\end{equation}
showing the analogies with
\begin{equation}\label{5.eqaut}
 \by'=\left[\!\!\begin{array}{rr}\ep&1\\-1&\ep\end{array}\right]\by
 - k_{0.5}(|\by|)\,\by\,.
\end{equation}
This second system provides a classical pattern of (autonomous) Hopf bifurcation.
Some simple figures will contribute to the explanation.}
\par
\textcolor{black}{
Let us begin with \eqref{5.eqaut}. Its solutions define a flow on $\R^2$,
which can be understood as a disjoint union of orbits: the projections of the
graphics of the solutions. For $\ep<0$ all the orbits tend (always as $t$ increases)
to the origin $\bcero\in\R^2$. Therefore, this point constitutes the global attractor.
For $\ep>0$ there appears another \lq\lq special\rq\rq~orbit: the circle
of radius $0.5+\sqrt{\ep}$ centered at the origin. It corresponds to the periodic
solutions of the system, and has the property that it attracts any other
orbit excepting the fixed point provided by the origin. Therefore the global attractor
is the closed disk, and its border is the attractor for the flow restricted to the
invariant set $\R^2-\{\bcero\}$. Figure \ref{5.autfig} shows this behaviour.}
\begin{figure}[ht]
\caption{Orbits on $\R^2$, autonomous case \eqref{5.eqaut}. Left: $\ep=-0.15$.
Right: $\ep=0.5$.}
\includegraphics[height=3.5cm]{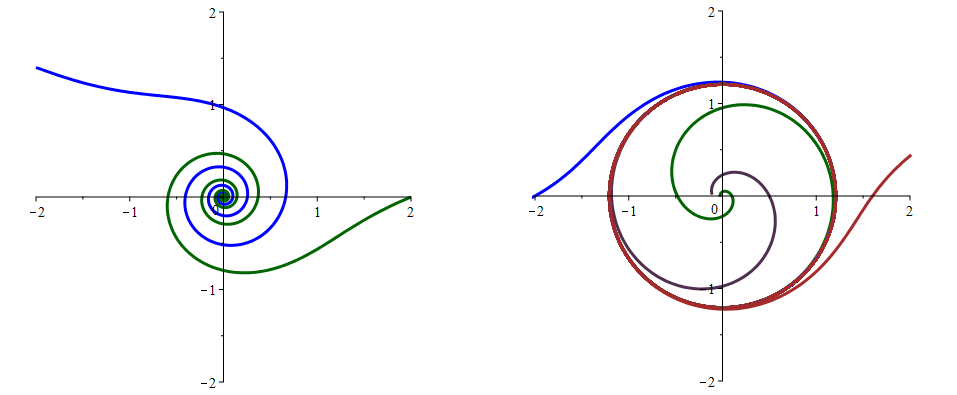}\label{5.autfig}
\end{figure}
\par
\textcolor{black}{
Let us go now with the nonautonomous example, for which the unperturbed linear system
(that corresponding to $\ep=0$) is given by the quasiperiodic matrix-valued function
$A_0(t)=\lsm 0&\cos t+\sin(\sqrt{2}t)\\
-\cos t-\sin(\sqrt{2}t)&0\rsm$. The hull construction described
in the Introduction provides the hull $\T^2$ (the
two-dimensional torus $\T^2$), a Kronecker flow on it,
and the family of systems
\[
 \by'=\left[\!\!\begin{array}{cc}\ep&f(\theta_1+t,\theta_2+\sqrt{2}t)\\
 -f(\theta_1+t,\theta_2+\sqrt{2}t)&\ep\end{array}\right]\by
 - k_{0.5}(|\by|)\,\by
\]
for $(\theta_1,\theta_2)\in\T^2$,
where $f(\theta_1,\theta_2):=\cos(\theta_1)+\sin(\theta_2)$}.
\par
\textcolor{black}{
Since the base flow is minimal, we can determine the Sacker and Sell
spectrum of the linear family by considering just the initial system
(which is given by $(\theta_1,\theta_2)=(0,0)$).
Assume that $\ep=0$. It is easy to check that
all the solutions of the corresponding linear system are bounded,
which ensures that the Sacker and
Sell spectrum is $\{0\}$. And from here it follows easily
that the the Sacker and Sell spectrum of the system corresponding to
$\ep$ is $\{\ep\}$. Therefore, if $\ep<0$, we are in the situation described by
Theorem \ref{4.teorattrzero},
and the global attractor is given by $\T^2\times\{\bcero\}$. And,
if $\ep>0$, the situation fits in that of Theorem \ref{4.teorcopiaWS},
and hence the global attractor
is homeomorphic to a solid cylinder around $\T^2$. Well, in this case it is indeed a
cylinder: $\T^2\times\{\by\in\R^2\,|\;|\by|\le 0.5+\sqrt{\ep}\}$, as one can see
by writting the system in polar coordinates. We have depicted
in Figure \ref{5.quasifig} the projections of the graphics of some solutions for
$\ep<0$ and $\ep>0$. Now they are not orbits of a flow, but the
analogies with the autonomous case are clear. For $\ep<0$ all the
projections approach the origin, which is the section of the
global attractor corresponding to $(\theta_1,\theta_2)=(0,0)$.
And, for $\ep>0$, there appears an \lq\lq invariant\rq\rq~circle, and
the projection of the graphic of any non zero solution approaches
to part of this circle.
}
\begin{figure}[ht]
\caption{Graphic projections, quasiperiodic case \eqref{5.eqquasi}. Left: $\ep=-0.15$.
Right: $\ep=0.5$.}
\includegraphics[height=3.5cm]{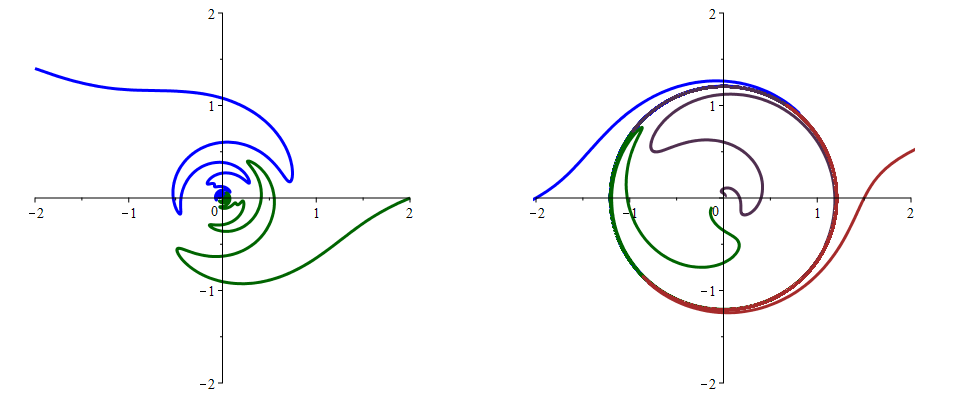}\label{5.quasifig}
\end{figure}
\par
\textcolor{black}{
There is an important difference among the autonomous case and the
quasiperiodic one that we have chosen: the rotation number is 1 for
the first and 0 for the second. But this fact causes no change in
the analysis of the structure of the global attractor:
Figure \ref{5.quasirotfig}, similar
to Figure \ref{5.quasifig}, depicts the projections of the graphics of the
solutions of
\begin{equation}\label{5.eqquasirot}
 \by'=\left[\!\!\begin{array}{cc}\ep&0.5+\cos t+\sin(\sqrt{2}t)\\
 -0.5-\cos t-\sin(\sqrt{2}t)&\ep\end{array}\right]\by
 - k_{0.5}(|\by|)\,\by\,,
\end{equation}
for which the rotation number is 0.5.
}
\begin{figure}[ht]
\caption{Graphic projections, quasiperiodic case with positive rotation
\eqref{5.eqquasirot}. Left: $\ep=-0.15$.
Right: $\ep=0.5$.}
\includegraphics[height=3.5cm]{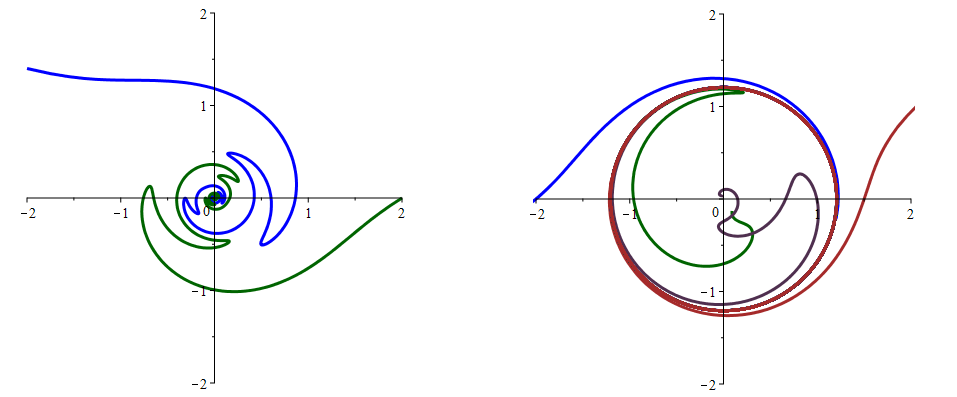}\label{5.quasirotfig}
\end{figure}
\par
\textcolor{black}{
Note finally that, for $\ep=0$, in the autonomous case \eqref{5.eqaut},
the global attractor is the disk of radius $0.5$, so that there is a strong discontinuity with
the situation for $\ep<0$; but also with the situation for $\ep>0$, since
now the border of the disk is no longer the attractor outside the origin:
there is a periodic orbit of radius $r$ for any $r\in(0,0.5)$. Naturally, this is due
to the fact that $k_{0.5}$ vanishes identically on $[0,0.5]$.
And, for the nonautonomous cases \eqref{5.eqquasi} and \eqref{5.eqquasirot},
the situation is similar: according to Theorem~\ref{5.teortodasacot} and
to the expression of the systems in polar coordinates, we know that
in both cases the global attractor is given for $\ep=0$ by the solid cylinder
$\T^2\times\{\by\in\R^2\,|\;|\by|\le 0.5\}$. Therefore, it is again completely
different from that corresponding to $\ep<0$; and also to that appearing
for $\ep>0$ in the sense that now the attractor is composed by infinitely many
compact invariant sets: the cylinders $\T^2\times\{\by\in\R^2\,|\;|\by|\le r\}$
for any $r\in[0,0.5]$. This is the \lq\lq lack of continuity
even in the simplest situation\rq\rq~that we referred to in the Introduction.
Figure \ref{5.cerofig} shows the projections of the graphics of some solutions
of \eqref{5.eqaut}, \eqref{5.eqquasi} and \eqref{5.eqquasirot} for $\ep=0$.
}
\begin{figure}[ht]
\caption{Graphic projections at the bifurcation point $\ep=0$, case
of bounded orbits. Left: autonomous case \eqref{5.eqaut}. Center:
quasiperiodic case with null rotation \eqref{5.eqquasi}
Right: quasiperiodic case with positive rotation \eqref{5.eqquasirot}.}
\includegraphics[height=3.5cm]{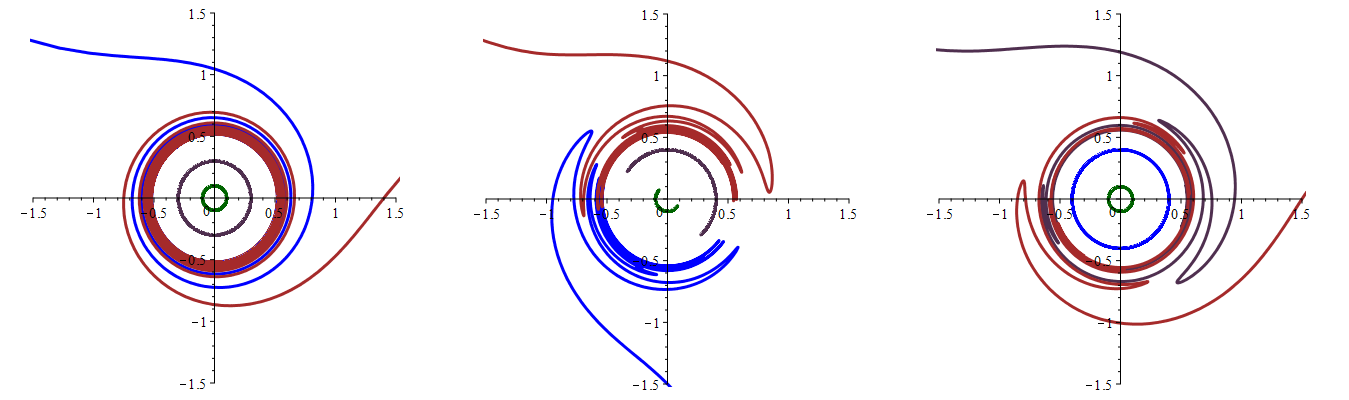}\label{5.cerofig}
\end{figure}
\section{Occurrence of Li-Yorke chaos in the case
of Sacker and Sell spectrum $\{0\}$}\label{sec6}
We continue working with the family \eqref{4.eqR2}
under the conditions
described at the beginning of Section \ref{sec4}, and with the
flows $(\WS,\wih\sigma,\R)$, $(\WP,\wit\sigma,\R)$ and
$(\W\!\times\!\R^2\!,\tau_\R,\R)$ respectively defined by
\eqref{2.defsigma2}, \eqref{2.defsigma3} and \eqref{4.deftaur}.
We will assume in all this section that $\Sigma_A=\{0\}$. In addition,
we will also assume from the beginning that at least one of the
systems of the family \eqref{4.eqlinear} has an unbounded solution:
the alternative has already been analyzed in Theorem~\ref{5.teortodasacot}.
In other words, the set $\W^+\!$ defined by \eqref{4.defOmega+} is not the whole $\W$.
And we will add one more condition:
\begin{hipos}\label{6.H}
The Sacker and Sell spectrum of the linear family \eqref{4.eqlinear}
is $\Sigma_A=\{0\}$, and the set
$\W^+\!$ defined by \eqref{4.defOmega+} satisfies $\W^+\!\ne\W$ and
$m_\W(\W^+\!)=1$.
\end{hipos}
We will make some remarks on these hypotheses at the end of this section.
For now, note that $\W^+\ne\W$ ensures that $\W^+$ is a set of the
first Baire category (see Proposition \ref{4.propBaire}); and
recall that $m_\W(\W^+\!)=1$ is equivalent to say that
the attractor $\mA$ or the flow $(\W\!\times\!\R^2\!,\tau_\R,\R)$
(see Theorem~\ref{4.teoratract})
has positive measure in $\W\!\times\!\R^2$
(see Proposition~\ref{4.propmedidaW+} and Remark~\ref{4.notamedida}).
Our main results, Theorem~\ref{6.teormain} and Corollary \ref{6.coromain},
substitute $\W^+\!\ne\W$ by a stronger condition
in order to guarantee that the flow
$(\W\!\times\!\R^2\!,\tau_\R,\R)$ is Li-Yorke chaotic;
or, more precisely, that its restriction to the
attractor $\mA$ is Li-Yorke chaotic in a quite strong sense.
\par
\par
To prepare the way, we need some new concepts and related
properties. The concept of continuous primitive is explained
in Definition~\ref{5.defprimac}.
\begin{defi}\label{5.defR}
We call $\mR(\W)$ to the set of continuous functions
$e\colon\W\to\R$ with $\int_\W e(\w)\,dm_\W=0$ which do not
admit a continuous primitive and such that
$\sup_{t\le 0}\int_0^t e(\ws)\,ds<\infty$ for $m_\W$-a.e.~$\w\in\W$.
\end{defi}
The next two results are basically proved in
\cite{john9}, \cite{jnot} and \cite{calo}.
We include many details of the proofs for the reader's convenience.
\begin{prop}\label{6.propprevia}
Let $e\colon\W\to\R$ be a continuous function
with $\int_\W e(\w)\,dm_\W=0$. Then
$\sup_{t\le 0}\int_0^t e(\ws)\,ds<\infty$ for $m_\W$-a.e.~$\w\in\W$ if and
only if $\sup_{t\ge 0}\int_0^t e(\ws)\,ds<\infty$ for $m_\W$-almost
every $\w\in\W$.
\end{prop}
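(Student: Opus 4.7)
The plan is to establish the two implications symmetrically by reduction to a single direction. The time-reversed flow $\sigma_t^{\text{rev}}:=\sigma_{-t}$ is also minimal and uniquely ergodic with invariant measure $m_\W$, and its cocycle with respect to the mean-zero function $-e$ is $\tilde F(t,\w):=\int_0^t(-e)(\sigma_{-s}\w)\,ds=F(-t,\w)$. Hence $\sup_{t\ge 0}\tilde F(t,\w)=\sup_{t\le 0}F(t,\w)$ and $\sup_{t\le 0}\tilde F(t,\w)=\sup_{t\ge 0}F(t,\w)$, so if one direction of the equivalence is proved in full generality, the other is obtained by applying it to $(\sigma^{\text{rev}},-e)$. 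It therefore suffices to prove that $\sup_{t\ge 0}F(t,\w)<\infty$ $m_\W$-a.e.\ implies $\sup_{t\le 0}F(t,\w)<\infty$ $m_\W$-a.e., where $F(t,\w):=\int_0^t e(\ws)\,ds$.

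For this implication I set $P(\w):=\sup_{t\ge 0}F(t,\w)\ge 0$, which is measurable and a.e.\ finite, and check that $\W_1:=\{P<\infty\}$ is $\sigma$-invariant of full measure. The cocycle identity $F(t+s,\w)=F(s,\w)+F(t,\sigma_s\w)$ combined with $\sup_{t\ge 0}F(t+s,\w)\le P(\w)$ for $s\ge 0$ yields the super-equilibrium inequality
\[
P(\sigma_s\w)+F(s,\w)\le P(\w),\qquad \w\in\W_1,\;s\ge 0.
\]
The key step is to upgrade this to an equality. Since $m_\W$ is $\sigma_s$-invariant and $\int F(s,\cdot)\,dm_\W=s\int e\,dm_\W=0$, if $P\in L^1(m_\W)$ then integrating the nonnegative quantity $P(\w)-P(\sigma_s\w)-F(s,\w)$ forces it to vanish $m_\W$-a.e.; in the general case when $P$ may fail to be integrable, the same conclusion is reached by a truncation argument applied to the sublevel sets $\{P\le K\}$, whose measures tend to $1$, combined with Poincar\'e recurrence of the orbit to these sets. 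This is the technique of the cited references \cite{john9}, \cite{jnot}, \cite{calo}.

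Once equality holds for $s\ge 0$, the cocycle identity $F(s,\w)=-F(-s,\sigma_s\w)$ extends it to all $s\in\R$: for $s\le 0$,
\[
F(s,\w)=-F(-s,\sigma_s\w)=-\bigl(P(\sigma_s\w)-P(\w)\bigr)=P(\w)-P(\sigma_s\w).
\]
Since $P\ge 0$, this representation gives $F(s,\w)\le P(\w)<\infty$ for every $s\in\R$ and a.e.\ $\w$, so in particular $\sup_{s\le 0}F(s,\w)\le P(\w)<\infty$ $m_\W$-a.e., completing the implication. The principal obstacle is the upgrade from inequality to equality when $P$ is only known to be a.e.\ finite; the remaining cocycle manipulations are routine.
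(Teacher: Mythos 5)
Your time-reversal reduction at the start is correct and clean, and the super-equilibrium strategy is a genuinely different route from the paper's. The paper proves the implication directly: it invokes Shneiberg's recurrence theorem for the mean-zero cocycle (plus Fubini) to produce, for $m_\W$-a.e.~$\w$, a sequence $l_n\downarrow-\infty$ with $\int_{l_n}^0 e(\ws)\,ds\to 0$, and then derives a contradiction from any hypothetical $t_n\uparrow\infty$ along which $\int_0^{t_n}e(\ws)\,ds\to\infty$ by writing $\int_{l_n}^0=\int_{l_n}^{t_n}-\int_0^{t_n}$ and applying the recurrence at the shifted points $\sigma_{t_n}\w$. Your version packages the same recurrence phenomenon in the function $P(\w)=\sup_{t\ge 0}F(t,\w)$ and the super-equilibrium inequality $P\ge P\circ\sigma_s+F(s,\cdot)$ for $s\ge 0$, which is a legitimate alternative framing (in the spirit of Atkinson-type arguments for mean-zero cocycles).

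However, the upgrade from inequality to equality is essentially the entire content of the proposition, and you do not prove it: you flag it as the principal obstacle and then defer to a ``truncation argument combined with Poincar\'e recurrence'' in the references. As written, this is a pointer, not an argument. To close the gap one can, for instance, fix $s_0>0$ with $\sigma_{s_0}$ ergodic, iterate the inequality to get
\[
P(\w)=P(\sigma_{ns_0}\w)+F(ns_0,\w)+\sum_{k=0}^{n-1}h(\sigma_{ks_0}\w),\qquad h\ge 0,
\]
divide by $n$, apply Birkhoff to $F(s_0,\cdot)\in L^1$ (mean zero) and to $h\ge 0$, and use $P\ge 0$ to force $\int h\,dm_\W=0$, hence $h=0$ a.e.; one then still needs the interpolation $\sup_{t\le 0}F(t,\w)\le\sup_{n\le 0}F(ns_0,\w)+s_0\|e\|_\infty$ (using boundedness of $e$) to pass from the lattice $\{ns_0\}$ to continuous time, and must check that the full-measure set on which the equality holds can be taken $\sigma$-invariant before applying $F(s,\w)=-F(-s,\sigma_s\w)$ at the point $\sigma_s\w$. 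None of these details appear in your write-up. So the route is sound, but the central lemma is asserted rather than proved, whereas the paper's Shneiberg-based contradiction argument is both shorter and complete.
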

\begin{proof}
Let us assume that $\sup_{t\le 0}\int_0^t e(\ws)\,ds<\infty$ for all
$\w$ in a set $\W_0$ with $m_\W(\W_0)=1$, which is clearly
$\sigma$-invariant. Property $\int_\W e(\w)\,dm_\W=0$
combined with the classical recurrence result of
\cite{shne} and with Fubini's theorem
provides a $\sigma$-invariant set $\W_r$ with $m_\W(\W_r)=1$ such that
for all $\w\in\W_r$ there exists $(l_n)\downarrow-\infty$ with
$\lim_{n\to\infty}\int_{l_n}^0 e(\ws)\,ds=0$. Let us define
$\W^e=\W_0\!\cap\W_r$, take $\w_0\in\W^e$, and check that
$\sup_{t\ge 0}\int_0^t e(\w_0\pu s)\,ds<\infty$.
We reason by contradiction assuming the existence of a sequence
$(t_n)\uparrow\infty$
with $\lim_{n\to\infty}\int_0^{t_n}e(\w_0\pu s)\,ds=\infty$.
Since $\w_0\pu t_n\in\W_r$ for all $n\in\N$, we can find $l_n<0$ such that
$1/n>\big|\int_{l_n-t_n}^0e(\w_0\pu(t_n+s))\,ds\,\big|=
\big|\int_{l_n}^{t_n}e(\w_0\pu s)\,ds\,\big|$. But then
$\lim_{n\to\infty}\int_{l_n}^0 e(\w_0\pu s)\,ds=
-\lim_{n\to\infty}\int_0^{t_n} e(\w_0\pu s)\,ds=-\infty$, which
in turn ensures that
$\sup_{t\le0}\int_0^t e(\w_0\pu s)\,ds=\infty$. This is the sought-for contradiction.
The proof of the converse assertion is analogous.
\end{proof}
\begin{prop}\label{6.propR}
Let $e\colon\W\to\R$ be a continuous function with
$\int_\W e(\w)\,dm_0=0$. The following assertions are equivalent:
\begin{itemize}
\item[\rm(1)] $e\in\mR(\W)$.
\item[\rm(2)] There exists a $\sigma$-invariant set
$\W^e\subseteq\W$ with $m_\W(\W^e)=1$
such that, if $\w\in\W^e$, then
$\sup_{t\in\R}\int_0^t e(\ws)\,ds<\infty$,
$\inf_{t\le 0}\int_0^t e(\ws)\,ds=-\infty$, and
$\inf_{t\ge 0}\int_0^t e(\ws)\,ds=-\infty$.
\item[\rm(3)] There exist a measurable function $h_e\colon\W\to(-\infty,0]$
and a $\sigma$-invariant set $\W^e\subseteq\W$ with $m_\W(\W^e)=1$
such that, if $\w\in\W^e$, then
$h_e(\wt)-h_e(\w)=\int_0^t e(\ws)\,ds$ for all $t\in\R$,
$\sup_{t\in\R}h_e(\wt)<\infty$,
$\inf_{t\le 0}h_e(\wt)=-\infty$, and $\inf_{t\ge 0}h_e(\wt)=-\infty$.
\end{itemize}
In addition, in this case, for every $\w$ in a residual subset of $\W$
there exist four sequences $(t_n^i)$ with $(t_n^i)\uparrow\infty$ for $i=1,3$ and
$(t_n^i)\downarrow-\infty$ for $i=2,4$ such that
\[
 \lim_{n\to\infty}\int_0^{t_n^i} e(\ws)\,ds=-\infty  \;\mbox{for $i=1,2$}
 \quad{\mbox{and}}\quad
 \lim_{n\to\infty}\int_0^{t_n^i} e(\ws)\,ds=\infty  \;\mbox{for $i=3,4$}\,.
\]
\end{prop}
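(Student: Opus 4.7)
Write $F(t,\w):=\int_0^t e(\ws)\,ds$ throughout. The plan is to close the three-way equivalence by the circular chain $(1)\Rightarrow(2)\Rightarrow(3)\Rightarrow(1)$, and then to derive the residual subset assertion by a Baire category argument on four $\sigma$-invariant $G_\delta$ sets encoding the oscillation of~$F$. The two structural tools are Proposition~\ref{6.propprevia} (which, using only $\int_\W e\,dm_\W=0$, promotes an a.e.\ one-sided bound on~$F$ to the other side) and Remark~\ref{6.notasolouno} (a Gottschalk--Hedlund type statement: in a minimal flow, $e$ admits no continuous primitive if and only if $\sup_{t\ge 0}|F(t,\w)|=\sup_{t\le 0}|F(t,\w)|=\infty$ for every $\w\in\W$).

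For $(1)\Rightarrow(2)$, feed the a.e.\ bound $\sup_{t\le 0}F<\infty$ into Proposition~\ref{6.propprevia} to obtain the same bound for $t\ge 0$ on another full-measure set; intersecting and passing to the $\sigma$-invariant envelope yields a full-measure $\sigma$-invariant set $\W^e$ on which $\sup_{t\in\R}F(t,\w)<\infty$. Remark~\ref{6.notasolouno} then supplies $\sup_{t\ge 0}|F|=\sup_{t\le 0}|F|=\infty$ for every $\w$, and combined with a finite supremum this forces $\inf_{t\ge 0}F=\inf_{t\le 0}F=-\infty$ on $\W^e$, giving~(2). For $(2)\Rightarrow(3)$, define $K(\w):=\sup_{t\in\R}F(t,\w)$; continuity of $t\mapsto F(t,\w)$ allows the supremum to be taken over $t\in\Q$, so $K$ is Borel measurable with values in $[0,\infty]$, and a change-of-variables computation gives the cocycle $K(\ws)=K(\w)-F(s,\w)$. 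Set $h_e:=-K$ on $\{K<\infty\}\supseteq\W^e$ and $h_e:=0$ elsewhere: then $h_e\le 0$, the cocycle $h_e(\wt)-h_e(\w)=F(t,\w)$ holds on $\W^e$, and the identities $\sup_{t\in\R}h_e(\wt)=h_e(\w)+K(\w)=0$ together with $\inf_{t\ge 0}h_e(\wt)=\inf_{t\le 0}h_e(\wt)=-\infty$ follow immediately from~(2). For $(3)\Rightarrow(1)$, the cocycle yields $\sup_{t\in\R}F\le\sup_t h_e(\wt)-h_e(\w)<\infty$ on $\W^e$, so in particular $\sup_{t\le 0}F<\infty$ almost surely; to rule out a continuous primitive, argue by contradiction: if a continuous $\hat h$ satisfied $\hat h(\wt)-\hat h(\w)=F(t,\w)$, then $h_e-\hat h$ would be measurable and $\sigma$-invariant on $\W^e$, hence $m_\W$-a.e.\ constant by ergodicity, so $h_e=\hat h+c$ on a $\sigma$-invariant full-measure set $\W^{**}\subseteq\W^e$; then for $\w\in\W^{**}$ we would have $h_e(\wt)=\hat h(\wt)+c$ for all $t$, which is bounded by compactness of~$\W$, contradicting $\inf_{t\ge 0}h_e(\wt)=-\infty$.

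For the residual subset, for each $i\in\{1,2,3,4\}$ let $B_i\subseteq\W$ denote the set of~$\w$ for which the $i$-th sequence exists. Each $B_i$ is $\sigma$-invariant, and by continuity of $F$ in its spatial argument each is a countable intersection of open threshold sets of the form $\{\w\,|\;\exists t\in I_i\text{ with }\pm F(t,\w)>n\}$, hence a $G_\delta$. Minimality of $(\W,\sigma,\R)$ makes every nonempty $\sigma$-invariant set dense, and a dense $G_\delta$ is residual by Baire's theorem. For $i=1,2$ (the two $-\infty$ directions) (2) gives $\W^e\subseteq B_i$, so $B_i$ is of full measure, dense, and hence residual. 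For $i=3,4$ the task reduces to showing that the $\sigma$-invariant $G_\delta$ set $\{K=\infty\}$ is nonempty, from which minimality and Baire again yield residuality of $B_3$ and~$B_4$; the intersection of the four residual sets is then residual and provides the four sequences simultaneously. I expect this last step to be the main obstacle: the abstract data of~(1) do not by themselves force $K$ to attain the value $+\infty$, because the measurable cocycle $h_e$ from~(3) could a priori be finite throughout~$\W$ while failing to be continuous. The plan is to import the Baire-genericity and asymmetric oscillation analyses of~\cite{john9}, \cite{jnot} and~\cite{calo}, which supply exactly the required unbounded positive excursions of~$F$ for $e\in\mR(\W)$.
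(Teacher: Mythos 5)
Your proof of the equivalence $(1)\Leftrightarrow(2)\Leftrightarrow(3)$ matches the paper's argument essentially line by line: the paper also feeds $e\in\mR(\W)$ through Proposition~\ref{6.propprevia} to get a two-sided finite supremum on a full-measure $\sigma$-invariant set, invokes Remark~\ref{6.notasolouno} to force the two infima to $-\infty$, defines $h_e(\w)=-\sup_{t\in\R}\int_0^t e(\ws)\,ds$ (truncated off $\W^e$) for $(2)\Rightarrow(3)$, and treats $(3)\Rightarrow(2)\Rightarrow(1)$ as essentially immediate; your ergodicity-based contradiction for $(3)\Rightarrow(1)$ is a mild elaboration of the same observation. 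For the residual-set assertion the paper simply cites Theorem~A.2 of \cite{jnot} rather than arguing, and you correctly set up the Baire framework, correctly isolate the nonemptiness of the two ``positive excursion'' sets as the genuine obstacle, and then defer to the same references the paper relies on — so the content and its difficulty are accurately represented, and the approach is the same.
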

\begin{proof}
We begin by assuming that $e\in\mR(\W)$. Then
Proposition \ref{6.propprevia} ensures that
$\sup_{t\in\R}\int_0^t e(\ws)\,ds<\infty$ for all $\w$ in a set
of full measure $\W^e$, which is clearly $\sigma$-invariant.
Hence $\inf_{t\le 0}\int_0^t e(\ws)\,ds=\inf_{t\ge 0}\int_0^t e(\ws)\,ds
=-\infty$ for all $\w\in\W^e$, since otherwise $e$ would admit
a continuous primitive: see Remark \ref{6.notasolouno}.
This completes the proof of (1)$\Rightarrow$(2). The converse implication is
almost obvious: see the first assertion in Remark~\ref{6.notasolouno}.
\par
Let us now assume that (2) holds,
define $h(\w):=-\sup_{t\in\R}\int_0^t e(\ws)\,ds$ for all
$\w\in\W$, and note that
$-\infty\le h(\w)\le 0$ for all $\w\in\W$ and $-\infty<h(\w)\le 0$
for all $\w\in\W^e$. It is not hard to check that
$h(\wt)-h(\w)=\int_0^t e(\ws)\,ds$. We modify
$h$ is order to take the value $0$ on $\W-\W^e$ and
obtain the function $h_e$ of (3). Conversely, it is obvious that
(3) implies (2).
\par
A detailed proof of the last assertion can be found in
Theorem A.2 of \cite{jnot}.
\end{proof}
The relation between the linear families
\eqref{4.eqlinear} and \eqref{5.eclineartraza01},
determined by \eqref{5.defe} and explained in Remarks~\ref{5.notasame},
will be fundamental in what follows.
Let us show the effect of Hypotheses~\ref{6.H}
on the function $e$ of \eqref{5.defe}.
\begin{prop}\label{6.propomega+}
Suppose that Hypotheses \ref{6.H} hold. Then,
the map $e=(1/2)\tr A$ belongs
to the set $\mR(\W)$ of Definition~\ref{5.defR}.
\end{prop}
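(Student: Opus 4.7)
The plan is to verify the three defining conditions of $\mR(\W)$ (Definition \ref{5.defR}) in turn.

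First, that $\int_\W e\,dm_\W=0$ is already recorded in Remark \ref{5.notasame}.3: by Oseledets' theorem applied to $\Sigma_A=\{0\}$ (\hyperlink{C3}{{\sc Case 3}} of Theorem \ref{2.teorOsel}), $\lim_{t\to\infty}(1/t)\ln\det U(t,\w)=0$, which combined with \eqref{5.reldetU} and Birkhoff's ergodic theorem yields $\int_\W e\,dm_\W=0$.

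Second, to establish $\sup_{t\le 0}\int_0^t e(\ws)\,ds<\infty$ for $m_\W$-a.e.\ $\w$, I would use a Liouville/wedge-product identity. Choosing two distinct $\theta_1,\theta_2\in\P$ and writing in polar form (via \eqref{2.relx}) the two solutions of \eqref{4.eqlinear}$_\w$ starting at $\lsm\sin\theta_i\\\cos\theta_i\rsm$, one obtains
\[
 \exp\Big(2\!\int_0^t e(\ws)\,ds\Big)=\det U(t,\w)=\frac{r_l(t,\w,\theta_1,1)\,r_l(t,\w,\theta_2,1)\,\sin\big(\wit\theta(t,\w,\theta_1)-\wit\theta(t,\w,\theta_2)\big)}{\sin(\theta_1-\theta_2)}.
\]
For every $\w\in\W^+$, Proposition \ref{4.propacotadas} gives $\sup_{t\le 0}r_l(t,\w,\theta_i,1)<\infty$ for $i=1,2$, and with $|\sin|\le 1$ the right-hand side is bounded above on $\R^-$, whence the claim. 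Hypothesis \ref{6.H} supplies $m_\W(\W^+)=1$, concluding this step.

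Third, and this is the main obstacle, I would rule out the existence of a continuous primitive of $e$ by contradiction. Assume $h_e\colon\W\to\R$ is continuous with $h_e(\wt)-h_e(\w)=\int_0^t e(\ws)\,ds$; by compactness of $\W$, $\exp(\int_0^t e(\ws)\,ds)$ is then uniformly bounded above and below by strictly positive constants on $\R\times\W$. Hence by Remark \ref{5.notasame}.2 the radial solutions $r_l(t,\wth,1)$ of \eqref{4.eqlinear} and $\wit r_l(t,\wth,1)$ of the zero-trace family \eqref{5.eclineartraza01} become uniformly comparable, so the analogue $\wit\W^+:=\{\w\in\W:\sup_{t\le 0}\wit r_l(t,\wth,1)<\infty\text{ for all }\theta\in\P\}$ of $\W^+$ for \eqref{5.eclineartraza01} coincides with $\W^+$; in particular $m_\W(\wit\W^+)=1$. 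I would then show $\wit\W^+=\W$, contradicting $\W^+\ne\W$. Fix $\w\in\wit\W^+$ and set $K(\w):=\sup_{t\le 0}|\wit U(t,\w)|<\infty$, where $\wit U$ is the fundamental matrix of \eqref{5.eclineartraza01}. Since $\det\wit U(t,\w)\equiv 1$ and the matrices are $2\times 2$, their singular values are reciprocal, giving $|\wit U(t,\w)^{-1}|=|\wit U(t,\w)|\le K(\w)$ for all $t\le 0$. The cocycle identity $\wit U(t,\w\pu s)=\wit U(t+s,\w)\,\wit U(s,\w)^{-1}$, applied to any sequence $s_n\downarrow-\infty$ and fixed $t\le 0$ (so $t+s_n\le 0$ for $n$ large), yields $|\wit U(t,\w\pu s_n)|\le K(\w)^2$. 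By minimality of $(\W,\sigma,\R)$ the backward orbit of $\w$ is dense, so every $\w_*\in\W$ equals $\lim_n\w\pu s_n$ for some $s_n\to-\infty$; continuity of $\w\mapsto\wit U(t,\w)$ for fixed $t$ then gives $|\wit U(t,\w_*)|\le K(\w)^2$ for all $t\le 0$, i.e., $\w_*\in\wit\W^+$. Hence $\wit\W^+=\W$, contradicting Hypothesis \ref{6.H}. The delicate point is precisely this last propagation step, combining the $\det\wit U\equiv 1$ rigidity, the cocycle identity, and minimality of the base to upgrade full-measure boundedness to pointwise boundedness on all of $\W$.
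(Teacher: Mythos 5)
Your three-step plan matches the paper's strategy for the first two points, and your third step reaches the same conclusion by a mildly different route; the overall proposal is correct.

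In the second step you write the Wronskian identity explicitly in polar form. One small slip: the angular factor should be $\sin\big(\wih\theta(t,\w,\theta_1)-\wih\theta(t,\w,\theta_2)\big)$, not $\sin\big(\wit\theta(t,\w,\theta_1)-\wit\theta(t,\w,\theta_2)\big)$, since $\wit\theta$ lives in $\P=\R/\pi\Z$ and the sine of the difference of projective coordinates is well defined only up to sign. For the upper bound on $\det U$ this is harmless (and indeed the paper itself just observes that bounded solutions on $\R^-$ give a bounded $\det U$ on $\R^-$ via \eqref{5.reldetU}, without writing out the polar expression).

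For the third step, the paper does not pass to the zero-trace family. It assumes $\inf_{t\le 0}\det U(t,\w_0)>0$ for some $\w_0\in\W^+\!$, uses $\sup_{t\le 0}|U(t,\w_0)|<\infty$ together with the $2\times 2$ identity $|M^{-1}|=|M|/|\det M|$ to bound $|U(s,\w_0)^{-1}|$ for $s\le 0$, and then performs exactly the same cocycle-plus-density-plus-continuity propagation that you use, deducing $\sup_{t\le 0,\w\in\W}|U(t,\w)|<\infty$ and hence $\W^+\!=\W$, a contradiction. This shows directly that $\inf_{t\le 0}\det U(t,\w_0)=0$, i.e.\ $\inf_{t\le 0}\int_0^t e(\w_0\pu s)\,ds=-\infty$, which rules out a continuous primitive by Remark~\ref{6.notasolouno}. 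Your variant instead supposes a continuous primitive exists, uses it to make $U$ and $\wit U$ uniformly comparable, and then exploits the rigidity $|\wit U^{-1}|=|\wit U|$ coming from $\det\wit U\equiv 1$; the final propagation step is the same. Both arguments are correct and logically equivalent; the paper's version is slightly more economical because it never needs to introduce $\wit U$ and it isolates the quantitative fact $\inf_{t\le 0}\int_0^t e\,ds=-\infty$, which is precisely the form in which the conclusion is reused later (Proposition~\ref{6.propR}).
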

\begin{proof}
As explained in Remark \ref{5.notasame}.3, Hypotheses \ref{6.H}
ensure that $\int_\W e(\w)\,dm_\W=0$.
Let us fix $\w_0\in\W^+\!$. Proposition~\ref{4.propacotadas} ensures that
all the solutions of the linear system \eqref{4.eqlinear}$_{\w_0}$
are bounded on $\R^-$. This and \eqref{5.reldetU}
yield $\sup_{t\le 0} \int_0^te(\w_0\pu s)\,ds<\infty$. Consequently,
either $e\colon\W\to\R$ admits a continuous primitive, or it belongs to
$\mR(\W)$.
We will check that $\inf_{t\le 0}\det U(t,\w_0)=0$,
which together with \eqref{5.reldetU}
ensures that $\inf_{t\le 0}\int_0^t e(\w_0\pu s)\,ds=-\infty$.
This precludes the
existence of a continuous primitive (see Remark~\ref{6.notasolouno}),
so that $e\in\mR(\W)$.
\par
We assume that $\inf_{t\le 0}\det U(t,\w_0)>0$.
Then $\sup_{s\le 0}|U(s,\w_0)^{-1}|<\infty$. Hence,
$\sup_{t\le 0,s\le 0}|U(t,\w_0\pu s)|=
\sup_{t\le 0,s\le 0}|U(t+s,\w_0)\,U(s,\w_0)^{-1}|<\infty$.
Therefore, since $\{\w_0\pu s\,|\;s\le 0\}$ is dense in $\W$,
$\sup_{t\le 0,\w\in\W}|U(t,\w)|<\infty$. This means that
all the solutions of all the systems \eqref{4.eqlinear}
are bounded on $\R^-$ and hence that
$\W^+\!=\W$ (see once more Proposition~\ref{4.propacotadas}).
But this contradicts one of the Hypotheses~\ref{6.H}.
\end{proof}
Out next purpose is to show that Hypotheses~\ref{6.H}
impose on the family \eqref{5.eclineartraza01} one of the conditions
of Theorem~\ref{3.teorcaos}, part of
whose proof will be fundamental later.
\begin{teor}\label{6.teormedida}
Suppose that Hypotheses~\ref{6.H} holds. Then
the flow $(\WP,\wit\sigma,\R)$ admits an invariant measure which is equivalent
to $m_\W\!\!\times\!l_\P$.
\end{teor}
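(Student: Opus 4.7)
My plan is to apply the criterion from Proposition~2.2 of \cite{obpa} recalled in Remark~\ref{3.notaextension}, item~\textbf{p6}: it suffices to produce a measurable $p\colon\WP\to[0,\infty)$, lying in $L^1(m_\W\!\!\times\!l_\P)$, strictly positive $(m_\W\!\!\times\!l_\P)$\nbd-a.e., and satisfying the cocycle identity
\[
 p(\wit\sigma(l,\wth))\;=\;p(\wth)\exp\Big(\!-\!\Int_0^l\frac{\p f}{\p\theta}(\wit\sigma(s,\wth))\,ds\Big)
 \;=\;p(\wth)\,\wit r_l^2(l,\wth,1),
\]
where $\wit r_l$ is the radial cocycle of the zero\nbd-trace family $\by'=\wit A(\wt)\by$ with $\wit A=A-eI_2$ and $e=(1/2)\tr A$; the substitution $A\leadsto\wit A$ preserves the projective flow by Remark~\ref{5.notasame}.1. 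Any such $p$ yields $p\cdot(m_\W\!\!\times\!l_\P)$ as a $\wit\sigma$\nbd-invariant measure, and the a.e.~positivity of $p$ makes it equivalent to $m_\W\!\!\times\!l_\P$.

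The construction rests on three ingredients. First, Liouville's identity $\det U^{\wit A}(t,\w)\equiv 1$ makes the zero\nbd-trace linear flow on $\W\!\times\!\R^2$ preserve $m_\W\!\!\times\!l_{\R^2}$ (equivalently, $m_\W\!\!\times\!l_\S\!\times\!r\,dr$ in polar coordinates). Second, Proposition~\ref{6.propomega+} gives $e\in\mR(\W)$, whence Proposition~\ref{6.propR}(3) provides a measurable primitive $h_e\colon\W\to(-\infty,0]$ on a $\sigma$\nbd-invariant full-measure set $\W^e$, with $h_e(\wt)-h_e(\w)=\Int_0^t e(\ws)\,ds$, $h_e\le 0$, and $h_e(\wt)\to-\infty$ along sequences $t\to\pm\infty$ for $\w$ in a residual subset of $\W$. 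Third, Hypotheses~\ref{6.H} combined with Proposition~\ref{4.propacotadas} force $\sup_{t\le 0}r_l(t,\wth,1)<\infty$ for $(m_\W\!\!\times\!l_\P)$\nbd-a.e.~$(\wth)$; through the conversion $r_l=\wit r_l\,e^{h_e\circ\sigma-h_e}$ this translates into tempered backward control of $\wit r_l$, quantified by the identity $\wit r_l^2(t,\wth,1)\,e^{2h_e(\wt)}=r_l^2(t,\wth,1)\,e^{2h_e(\w)}$.

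My first attempt is the explicit candidate $p(\wth):=\Int_\R\wit r_l^{-2}(t,\wth,1)\,e^{2h_e(\wt)}\,dt$ (extended by $0$ off $\W^e$): the cocycle identity follows by a direct substitution $t\mapsto t+l$ combined with $\wit r_l(t+l,\wth,1)=\wit r_l(t,\wit\sigma(l,\wth),1)\wit r_l(l,\wth,1)$ and the translation of $h_e$, while positivity a.e.~comes from positivity of the integrand near $t=0$. The main obstacle is the simultaneous a.e.~finiteness of this integral and its $L^1$\nbd-integrability against $m_\W\!\!\times\!l_\P$: the integration range $\R$ demanded by the cocycle is in structural tension with the merely subexponential decay of $\wit r_l^{-2}$ guaranteed by the single zero Lyapunov exponent of Theorem~\ref{2.teorOsel}, \textsc{case 3}. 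The decisive resource is the escape $h_e(\wt)\to-\infty$ at both ends, provided by $e\in\mR(\W)$, which multiplicatively damps the integrand enough to force convergence on a set of full measure. Should this direct construction fail in $L^1$, the fallback is to realize $p$ as a weak\nbd-$L^1$ limit of the Ces\`aro averages of the densities of the pushforwards $(\wit\sigma_t)_*(m_\W\!\!\times\!l_\P)$, using the $\wit\tau_l$\nbd-invariance of $m_\W\!\!\times\!l_{\R^2}$ to extract uniform integrability and Hypotheses~\ref{6.H} to prevent the limit from concentrating on a null set.
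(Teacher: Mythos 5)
Your framework is the right one: you correctly reduce the statement to exhibiting a density $p$ satisfying the cocycle identity from Proposition~2.2 of \cite{obpa}, and you correctly identify the three ingredients (Liouville for the zero-trace family, the primitive $h_e$ from Proposition~\ref{6.propR}, the backward boundedness of $r_l$ on $\W^+$). However, your primary explicit candidate $p(\wth)=\int_\R\wit r_l^{-2}(t,\wth,1)\,e^{2h_e(\wt)}\,dt$ does not converge, and not for a marginal reason: using $\wit r_l^{-2}(t,\wth,1)=r_l^{-2}(t,\wth,1)\,e^{2h_e(\wt)}/e^{2h_e(\w)}$ one rewrites the integrand over $t\le 0$ as $r_l^{-2}(t,\wth,1)\,e^{4h_e(\wt)}/e^{2h_e(\w)}$, and for $(\wth)$ with $\w\in\W^+$ (full measure) the factor $r_l^{-2}(t,\wth,1)$ is bounded \emph{below} on $\R^-$ while, by Birkhoff's ergodic theorem applied to the strictly positive function $e^{4h_e}$, the quantity $\int_{-T}^0 e^{4h_e(\wt)}\,dt$ grows linearly in $T$. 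So the integral diverges for a.e.\ $(\wth)$. The damping from $e\in\mR(\W)$ occurs only along subsequences ($\inf h_e=-\infty$, not $\lim h_e=-\infty$); it does not make the tail integrable.

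Your fallback is essentially the paper's actual construction: the density of $(\wit\sigma_t)_*(m_\W\!\!\times\!l_\P)$ at $(\wth)$ is $\wit r_l^{-2}(-t,\wth,1)$, so the Ces\`aro average you describe is exactly $p_T(\wth)=\frac{1}{T}\int_{-T}^0\wit r_l^{-2}(s,\wth,1)\,ds$, and the paper takes $p:=\liminf_{T\to\infty}p_T$. But the fallback is left as a sketch, and the gaps you would still need to fill are the substantive ones. First, invoking weak-$L^1$ compactness (Dunford--Pettis) is both unnecessary and problematic; the paper instead shows by a Jacobian/Fubini computation that $\int_{\WP}p_T\,d(m_\W\!\!\times\!l_\P)=1$ for all $T$ and then applies Fatou's lemma to the pointwise $\liminf$, which gives $p\in L^1$ directly. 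Second, and more importantly, the a.e.\ positivity of $p$ — the delicate step — is not addressed by ``preventing concentration''; one needs the concrete lower bound $p_T(\wth)\ge\frac{1}{c_{(\wth)}H_e(\w)}\cdot\frac{1}{T}\int_{-T}^0 H_e(\ws)\,ds$ (with $H_e=e^{2h_e}$ and $c_{(\wth)}$ the backward bound on $r_l^2$), and Birkhoff to make the Ces\`aro average of $H_e$ converge to $\int H_e\,dm_\W>0$. Third, since $p$ is defined as a $\liminf$ rather than a limit, one must verify the cocycle identity for $p$ itself; the paper does this by first establishing the exact chain rule for $p_T$ and then passing to $\liminf$, which preserves the identity because the cocycle factor $\exp(-\int_0^l\p f/\p\theta)$ is finite and $t$-independent.

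So: correct global strategy and correct identification of the tools, but the primary candidate fails, and the fallback — which is the route the paper takes — needs the Fatou step, the explicit positivity estimate, and the passage of the cocycle identity through the $\liminf$ to be a proof.
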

\begin{proof}
Let $f$ be defined by \eqref{3.deffz}.
According to Remark~\ref{5.notasame}.1 and
Proposition 2.2 of \cite{obpa},
it is enough to look for a function
$p\colon\WP\to\R$ in $L^1(\WP,m_\W\!\!\times\!l_\P)$
with $p(\wth)>0$ for $(m_\W\!\!\times\!l_\P)$-a.a.~$(\wth)$
and such that, for all $(\wth)\in\WP$ and $l\in\R$,
\begin{equation}\label{6.ecfuncional}
 p(\wit\sigma(l,\wth))=p(\wth)\exp\Big(-\int_0^l \frac{\p f}{\p\theta}\,
 (\wit\sigma(s,\wth))\,ds\Big):
\end{equation}
if so, $p$ is the density function of a $\wit\sigma$-invariant measure.
Let us define
\[
 p_t(\wth):=\frac{1}{t}\int_{-t}^0\Big(\exp\int_0^s \frac{\p f}{\p\theta}\,
 (\wit\sigma(r,\wth))\,dr\Big)\,ds\quad\text{~and~}\quad
 p(\wth):=\liminf_{t\to\infty}p_t(\wth)
\]
for $(\wth)\in\WP$.
Our goal is to proof that $p$ satisfies the three mentioned conditions.
We will do it in three steps.
\vspace{.1cm}\par
{\sc Step 1}. We will check that $p\in L^1(\WP,m_\W\!\!\times\!l_\P)$.
More precisely, that
\begin{equation}\label{6.step11}
\int_{\mC} p_t(\wth)\,d(m_\W\!\!\times\!l_\P)=
\frac{1}{t}\int_{-t}^0 (m_\W\!\!\times\!l_\P)(\wit\sigma_s(\mC))\,ds
\end{equation}
for any measurable set $\mC\subseteq\WP$ and $t\ge 0$. This yields
$\int_{\WP} p_t(\wth)\,d(m_\W\!\!\times\!l_\P)=1$, and then,
as a consequence of Fatou's lemma, we obtain
\[
 0\le \int_{\WP}p(\wth)\,d(m_\W\!\!\times\!l_\P)\le
 \liminf_{t\to\infty}\int_{\WP}p_t(\wth)\,d(m_\W\!\!\times\!l_\P)=1\,,
\]
so that $p\in L^1\!(\WP)$.
Let us take a measurable set $\mC\subseteq\WP$. The definition of $p_t$, Fubini's
theorem, and the equality
$\chi_{_\mC}(\wth)=\chi_{_{\wit\sigma_l(\mC)}}(\wit\sigma(l,\wth))$ for
all $l\in\R$ yield
\begin{equation}\label{6.step12}
\begin{split}
&\int_{\WP} p_t(\wth)\,\chi_{_\mC}(\wth)\,d(m_\W\!\!\times\!l_\P)\\
& \quad\;=\frac{1}{t}\int_{-t}^0 \int_\W\int_{\P}
\Big(\exp \int_0^s \frac{\p f}{\p\theta}\,
 (\wit\sigma(l,\wth))\,dl\Big)\,\chi_{_{\wit\sigma_s(\mC)}}
 (\wit\sigma(s,\wth))\,dl_\P\,dm_\W\,ds\,.
\end{split}
\end{equation}
It is easy to deduce from
$(d/dt)\,\wit\theta(t,\wth)=f(\wit\sigma(t,\wth))
=f((\wt,\wit\theta(t,\wth))$
and
$\wit\theta(0,\wth)=\theta$ that
$(d/d\theta)\,\wit\theta(s,\wth)=
\exp\big(\!\int_0^s (\p f/\p\theta)\,(\wit\sigma(l,\wth))\,dl\big)$. In turn,
this implies that
\[
 \int_{\P}\exp\Big(\int_0^s \frac{\p f}{\p\theta}\,
 (\wit\sigma(l,\wth))\,dl\Big)\,\chi_{_{\wit\sigma_s(\mC)}}(\wit\sigma(s,\wth))\,dl_\P=
 \int_{\P}\chi_{_{\wit\sigma_s(\mC)}}(\ws,\theta)\,dl_\P\,.
\]
The $\sigma$-invariance of $m$ ensures that
\[
 \int_{\W}\int_{\P}\chi_{_{\wit\sigma_s(\mC)}}(\ws,\theta)\,dl_\P\,dm_\W=
 \int_{\W}\int_{\P}\chi_{_{\wit\sigma_s(\mC)}}(\wth)\,dl_\P\,dm_\W=
 (m_\W\!\!\times\!l_\P)(\wit\sigma_s(\mC))\,,
\]
and this equality and \eqref{6.step12} show \eqref{6.step11}.
The first step is complete.
\vspace{.1cm}\par
{\sc Step 2}. We will now check that $p(\wth)>0$ for
$(m_\W\!\!\times\!l_\P)$-almost all
$(\wth)\in\WP$.
We use the notation established in Remark~\ref{5.notasame}.2
and the information that it provides in order to write
\[
 p_t(\wth)=\frac{1}{t}\int_{-t}^0 \frac{1}{\wit r_l^{\,2}(s,\wth,1)}\:ds=
 \frac{1}{t}\int_{-t}^0 \frac{1}{r_l^2(s,\wth,1)}\exp\Big(2\int_0^s
 e(\w\pu r)\,dr\Big)\,ds\,.
\]
We write $\exp\big(2\int_0^s e(\w\pu r)\,dr\big)=
H_e(\ws)/H_e(\w)$, for $H_e(\w)=\exp\,(2 h_e(\w))$,
where $h_e$ is the map provided by Propositions \ref{6.propomega+}
and \ref{6.propR}. Note that $H_e(\w)\in(0,1]$. Birkhoff's ergodic
theorem ensures that
the set of points $\w_0$ of $\W^+$ for which there exists
$\lim_{t\to\infty}(1/t)\int_{-t}^0 H_e(\w_0\pu s)\,ds=
\int_\W H_e(\w)\,dm_\W$ has measure $1$. We fix
$\w_0$ in this set  and $\theta_0\in\P$.
Then there exists $c_{(\w_0,\theta_0)}\ge 1$ such that
$r_l^2(t,\w_0,\theta_0,1)\le c_{(\w_0,\theta_0)}$ for all
$t\le 0$: see Proposition~\ref{4.propacotadas}. Altogether,
we have
\[
 p_t(\w_0,\theta_0)\ge \frac{1}{c_{(\w_0,\theta_0)}\,H_e(\w_0)}\;
 \frac{1}{t}\int_{-t}^0 H_e(\w_0\pu s)\,ds\,,
\]
so that
\[
 p(\w_0,\theta_0)\ge \frac{1}{c_{(\w_0,\theta_0)}\,H_e(\w_0)}\int_\W H_e(\w)\,dm_\W>0\,.
\]
\vspace{0cm}\par
{\sc Step 3}. We will finally check that $p$ satisfies \eqref{6.ecfuncional}.
First, we will check that
\begin{equation}\label{6.chain}
\begin{split}
&p_t(\wit\sigma(l,\wth))\,\pu\,\exp\Big(\int_0^l
\frac{\partial f}{\partial\theta}
(\wit\sigma(r,\wth))\,dr\Big)\\
&\qquad\qquad=\frac{t-l}{t}\,p_{t-l}(\wth)+\frac{1}{t}\int_0^l
\,\exp\Big(\int_0^s\frac{\partial f}{\partial\theta}
(\wit\sigma(r,\wth))\,dr\Big)\,ds
\end{split}
\end{equation}
for $0<l<t$. We call $I$ to the left-hand term in the previous expression. Then,
\[
\begin{split}
 I:&=\frac{1}{t}\int_{-t}^0\exp\Big(\int_0^s\frac{\partial f}{\partial\theta}
(\wit\sigma(l+r,\wth))\,dr\Big)\,ds \,\pu\,
\exp\Big(\int_0^l\frac{\partial f}{\partial\theta}
(\wit\sigma(r,\wth))\,dr\Big)\\
&=
\frac{1}{t}\int_{-t}^0\exp\Big(\int_0^{s+l}\frac{\partial f}{\partial\theta}
(\wit\sigma(r,\wth))\,dr\Big)\,ds\\
&=
\frac{t-l}{t-l}\:\frac{1}{t}\int_{-t+l}^l\exp\Big(\int_0^{s}
\frac{\partial f}{\partial\theta}
(\wit\sigma(r,\wth))\,dr\Big)\,ds\\
&=\frac{t-l}{t}\:p_{t-l}(\wth)+
\frac{1}{t}\int_0^l\exp\Big(\int_0^{s}
\frac{\partial f}{\partial\theta}
(\wit\sigma(r,\wth))\,dr\Big)\,ds\,.
\end{split}
\]
We take $\liminf$ in \eqref{6.chain} in order to get
\eqref{6.ecfuncional} for $l>0$. From here we deduce that
\[
\begin{split}
 p(\wit\sigma(-l,\wth))
 &=p(\wth)\,\exp\Big(\int_0^l \frac{\p f}{\p\theta}\,
 (\wit\sigma(s-l,\wth))\,ds\Big)\\
 &=p(\wth)\,\exp\Big(-\int_0^{-l} \frac{\p f}{\p\theta}\,
 (\wit\sigma(s,\wth))\,ds\Big)
\end{split}
\]
if $l>0$,
which completes the proof of \eqref{6.ecfuncional}, and hence that
of the theorem.
\end{proof}
Some more preliminary work is required before formulating and proving the
main result.
Recall that the attractor $\wit\mB$ for $(\WP\times\R,\wit\tau,\R)$ can be written
in terms of the map $\wit\beta$ as
$\wit\mB=\bigcup_{(\wth)\in\WP}\big(\{(\wth)\}\times
[0,\wit\beta(\wth)]\big)$: see Theorem~\ref{4.teortapa}(i).
And recall that $\rho\in(0,1]$ is the constant appearing in \eqref{4.defkrho}.
We define
\begin{align}
 (\WP)_l^+&:=\{(\wth)\in\WP\,|\;0<\sup_{t\in\R}
 \wit\beta(\wit\sigma(t,\wth))\le\rho\}\subseteq(\WP)^+\!\,,\nonumber\\
 \W^*&:=\{\w\in\W\,|\;l_\P(((\WP)_l^+)_\w)=1\}\,,\label{6.defOmega*}\\
 \wit\mB_l&:=\text{closure}_{\WP\times[0,\rho]}
 \{(\wth,r)\in\wit\mB\,|\;(\wth)\in(\WP)_l^+\}\,,\nonumber\\
 \mA_l&:=
 \big\{\big(\w,\lsm r\sin\theta\\r\cos\theta\rsm\big)\,|\;
 (\w,\mkp(\theta),r)\in\wit\mB_l\big\}\,.\nonumber
\end{align}
Note that $\wit\mB_l$ and
$\mA_l$ are compact
subsets of the global attractors $\wit\mB$ (for the flow
$(\WP\!\times\!\R^+,\wit\tau,\R)$)
and $\mA$ (for $(\W\!\times\R^2\!,\tau_\R,\R)$) described in Theorems
\ref{4.teoratract} and \ref{4.teortapa}.
\begin{prop}\label{6.prop413}
Suppose that Hypotheses~\ref{6.H} hold. Then,
\begin{itemize}
\item[\rm(i)] the set $(\WP)_l^+$ is $\wit\sigma$-invariant and
$(m_\W\!\!\times\!l_\P)((\WP)_l^+)=1$.
\item[\rm (ii)] Let $\W^*$ and $\W^+\!$ be defined by
\eqref{6.defOmega*} and \eqref{4.defOmega+}. Then
$\W^*\!$ is $\sigma$-invariant, $m_\W(\W^*)=1$, and
$\W^*\subseteq\W^+\!$.
\item[\rm(iii)]
For every $\w\in\W^*$ there exists $r_\w>0$ such that
$(\wth,r)\in\wit\mB_l$ for all $\w\in\W^*$, $\theta\in\P$ and $r\in[0,r_\w]$.
In particular, the section $(\mA_l)_\w\subset\R^2$
contains the closed disk centered at the origin and with radius
$r_\w>0$ for all $\w\in\W^*$.
\item[\rm(iv)]
$\wit\mB_l$ is $\wit\tau$-invariant,
$\mA_l$ is $\tau_\R$-invariant, and the restrictions of the flows
to these sets are given by the family of linear systems \eqref{4.eqlinear}.
\end{itemize}
\end{prop}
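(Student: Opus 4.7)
I would prove (i) first and then deduce (ii)--(iv) from it. The $\wit\sigma$-invariance of $(\WP)_l^+$ in (i) is immediate since $F(\wth):=\sup_{t\in\R}\wit\beta(\wit\sigma(t,\wth))$ is constant on $\wit\sigma$-orbits and $(\WP)^+$ is $\wit\sigma$-invariant. For the full-measure claim, the inclusion $\W^+\!\!\times\!\P\subseteq(\WP)^+$ combined with $m_\W(\W^+)=1$ from Hypotheses~\ref{6.H} yields $(m_\W\!\!\times\!l_\P)((\WP)^+)=1$, so it suffices to show that the $\wit\sigma$-saturation $\mS:=\bigcup_{t\in\R}\wit\sigma_{-t}(\{\wit\beta>\rho\})$ has zero $(m_\W\!\!\times\!l_\P)$-measure. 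Since the $\wit\sigma$-invariant measure $\mu$ provided by Theorem~\ref{6.teormedida} is equivalent to $m_\W\!\!\times\!l_\P$, this reduces to $\mu(\{\wit\beta>\rho\})=0$: continuity of $t\mapsto\wit\beta(\wit\sigma(t,\wth))=r(t,\wth,\wit\beta(\wth))$ makes $\mS$ a countable union of translates indexed by $\Q$, each $\mu$-null by invariance. Decomposing $\mu$ into ergodic components $\mu_\alpha$, those with $\mu_\alpha((\WP)^+)=0$ are supported on $\{\wit\beta=0\}$ and the assertion is trivial; otherwise $\mu_\alpha((\WP)^+)=1$, and along $\mu_\alpha$-typical orbits the identity $(d/dt)\log\wit\beta=g-k_\rho(\wit\beta)$ integrates to $(1/t)[\log\wit\beta(\wit\sigma(t,\wth))-\log\wit\beta(\wth)]=(1/t)\int_0^t(g-k_\rho(\wit\beta))\,ds$. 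Since $(1/t)\log r_l(t,\wth,1)=(1/t)\int_0^t g\,ds\to 0$ for every $(\wth)$ by Theorem~\ref{2.teorOsel} (Case~3), Birkhoff gives $\int g\,d\mu_\alpha=0$. Letting $t\to-\infty$, the uniform bound $\log\wit\beta\le\log r_\rho$ forces the left-hand side to satisfy $\liminf\ge 0$, while the right-hand side converges to $-\int k_\rho(\wit\beta)\,d\mu_\alpha$; with $k_\rho\ge 0$, this forces $k_\rho(\wit\beta)=0$ $\mu_\alpha$-a.e., hence $\wit\beta\le\rho$ $\mu_\alpha$-a.e.

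\textbf{Parts (ii)--(iv).} For (ii), each $\theta\mapsto\wit\theta(t,\wth)$ is a diffeomorphism of $\P$ that preserves $l_\P$-null sets, which gives $\sigma$-invariance of $\W^*$; Fubini on (i) yields $m_\W(\W^*)=1$; and Remark~\ref{4.notaopciones}.1 (three options $\P$, $\emptyset$, or a singleton for $((\WP)^+)_\w$) forces $((\WP)^+)_\w=\P$ whenever $\w\in\W^*$, i.e.\ $\W^*\subseteq\W^+$. For (iii), $\w\in\W^+$ yields $M_\w^-:=\sup_{t\le 0}|U(t,\w)|<\infty$, so setting $r_\w:=\rho/M_\w^->0$, any $\theta\in\P$ and $r\in[0,r_\w]$ satisfy $r_l(t,\wth,r)\le\rho$ on $\R^-$; hence $r(\cdot,\wth,r)=r_l(\cdot,\wth,r)$ there and is globally bounded by dissipativity, showing $(\wth,r)\in\wit\mB$. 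If $\theta\in((\WP)_l^+)_\w$ this already lies in the defining set of $\wit\mB_l$; for general $\theta$ I would approximate by $\theta_n\in((\WP)_l^+)_\w$ (dense since $\w\in\W^*$) and invoke closure. The disk statement for $(\mA_l)_\w$ follows via the Cartesian-polar correspondence of Theorem~\ref{4.teoratract}(iv). For (iv), the defining set of $\wit\mB_l$ is $\wit\tau$-invariant (by invariance of $\wit\mB$ and of $(\WP)_l^+$, together with monotonicity $r(t,\wth,r)\le r(t,\wth,\wit\beta(\wth))=\wit\beta(\wit\sigma(t,\wth))\le\rho$), and closure commutes with the continuous flow $\wit\tau$; since $\wit\mB_l\subseteq\WP\!\!\times\![0,\rho]$, $k_\rho$ vanishes there and the dynamics reduces to $r'=rg$, equivalent in Cartesian coordinates to $\by'=A(\wt)\,\by$; the analogous statement for $\mA_l$ follows by transcription.

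\textbf{Main obstacle.} The delicate step is the Birkhoff-ODE estimate in (i), specifically extracting $\liminf_{t\to-\infty}(1/t)\log\wit\beta(\wit\sigma(t,\wth))\ge 0$ from the uniform upper bound $\wit\beta\le r_\rho$ so as to combine $\int g\,d\mu_\alpha=0$ with $k_\rho(\wit\beta)\ge 0$ into $\wit\beta\le\rho$ $\mu_\alpha$-almost everywhere. The vanishing of the Lyapunov exponent (Case~3 of Theorem~\ref{2.teorOsel}) and the existence of an invariant measure equivalent to $m_\W\!\!\times\!l_\P$ (Theorem~\ref{6.teormedida}) are both essential at this step.
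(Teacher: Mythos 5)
Your proof is correct, and it differs from the paper's argument in a few substantive ways. For (i), where the paper simply cites Theorem~35 of \cite{calo} (and the ergodic decomposition theorem) to conclude $\wit\mu\big((\WP)_l^+\big)=1$, you supply a self-contained proof: you reduce to showing $\mu\big(\{\wit\beta>\rho\}\big)=0$, pass to ergodic components, and on each component with $\mu_\alpha\big((\WP)^+\big)=1$ exploit the identity $\log\wit\beta(\wit\sigma(t,\wth))-\log\wit\beta(\wth)=\int_0^t\big(g-k_\rho(\wit\beta)\big)\,ds$ together with the vanishing Lyapunov exponent from \hyperlink{C3}{\sc Case 3} of Theorem~\ref{2.teorOsel} and Birkhoff's theorem to force $\int k_\rho(\wit\beta)\,d\mu_\alpha=0$. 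This essentially reproves the cited result in the present setting and makes the argument transparent. For (ii)--(iii) your ordering is also different: you first deduce $\W^*\subseteq\W^+$ directly from Remark~\ref{4.notaopciones}.1 (since $l_\P\big(((\WP)_l^+)_\w\big)=1$ excludes the singleton and empty cases), and then for $\w\in\W^*\subseteq\W^+$ you obtain the uniform radius $r_\w=\rho/\sup_{t\le 0}|U(t,\w)|$ from Proposition~\ref{4.propacotadas}, which makes every $r_l(\cdot,\wth,r)$ with $r\le r_\w$ stay below $\rho$ on $\R^-$. The paper instead chooses two specific directions $\theta_1,\theta_2=\theta_1+\pi/2$ in the dense set $\big((\WP)_l^+\big)_\w$, sets $r_\w=(1/\sqrt{2})\inf\big(\wit\beta(\wth_1),\wit\beta(\wth_2)\big)$, and uses a trigonometric decomposition to control all other angles, obtaining $\w\in\W^+$ (i.e.\ the inclusion of (ii)) as a byproduct of the (iii) computation rather than beforehand. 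Both routes are valid; yours is somewhat cleaner structurally, while the paper's $r_\w$ is phrased intrinsically in terms of $\wit\beta$. Part (iv) is handled the same way in both.
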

\begin{proof}
The set $(\WP)_l^+$ is composed by $\wit\sigma$-orbits, so that
it is $\wit\sigma$-invariant.
Let $\wit\mu$ be the $\wit\sigma$-invariant measure on $\WP$
provided by Theorem~\ref{6.teormedida}, which we assume to be normalized.
In the case that $\wit\mu$ is ergodic, we can repeat the
arguments of Theorem 35 of \cite{calo} in order to check that
$\wit\mu((\WP)_l^+)=1$ (see also Theorem 5.8 of \cite{clos}).
The theorem of decomposition into ergodic components (see Theorem 6.4
or Corollary 6.5 of \cite{mane}) ensures that this property also
holds in the general case.
The last assertion (i) is an immediate consequence of the first one and
the equivalence of the measures $\wit\mu$ and $m_\W\!\!\times\!l_\P$.
\par
The $\sigma$-invariance of $\W^*$ follows from the fact that $\wit\sigma_(t,\w)$
gives a $C^1$ transformation of $\P$ for each fixed $(t,\w)$, and
the property $m_\W(\W^*)=1$ follows from (i) and Fubini's theorem.
Let us define $\mC:=\{(\wth,r)\in\wit\mB\,|\;(\wth)\in(\WP)_l^+\}$.
We fix $\w\in\W^*$ and choose $\theta_1$ such that $(\wth_1)$ and
$(\wth_2)$ are in $(\WP)_l^+$ for $\theta_2=\theta_1+\pi/2$.
It follows from the definition of $(\WP)_l^+$ and Theorem~\ref{4.teortapa}(ii)
that $\sup_{t\in\R} r(t,\wth_i,\wit\beta(\wth_i))\le \rho$ for
$i=1,2$, so that $r(t,\wth_i,\wit\beta(\wth_i))=r_l(t,\wth_i,\wit\beta(\wth_i))$.
Let us set $r_\w:=(1/\sqrt{2})\inf(\wit\beta(\wth_1),\wit\beta(\wth_2))>0$ and
take any $\theta\in\S$. Let us write
$\lsm\sin\theta\\\cos\theta\rsm=c_1\lsm\sin\theta_1\\\cos\theta_1\rsm
+ c_2\lsm\sin\theta_2\\\cos\theta_2\rsm$, so that $c_1^2+c_2^2=1$.
Since $r_\w\le\rho/\sqrt{2}<\rho$, for $t$ close to $0$,
\[
\begin{split}
 &r(t,\w,\theta,r_\w)=r_l(t,\w,\theta,r_\w)
 =r_\w\big|\,\by_l\big(t,\w,\lsm\sin\theta\\\cos\theta\rsm\!\big)\big|\\
 &\qquad=r_\w\,\Big|\,\by_l\Big(t,\w,c_1\lsm\sin\theta_1\\\cos\theta_1\rsm
 + c_2\lsm\sin\theta_2\\\cos\theta_2\rsm\!\Big)\Big|\\
 &\qquad\le r_\w\,\Bigg(\frac{|c_1|}{\wit\beta(\wth_1)}\,
 r(t,\wth_1,\wit\beta(\wth_1))+
 \frac{|c_2|}{\wit\beta(\wth_2)}\,
 r(t,\wth_2,\wit\beta(\wth_2))\Bigg)\\
 &\qquad\le \frac{|c_1|+|c_2|}{\sqrt{2}}\,\rho\le\rho\,.
\end{split}
\]
An easy contradiction argument shows that $\sup_{t\in\R}
r(t,\w,\theta,r_\w)\le\rho$. This ensures that $\w\in\W^+\!$, so that
(ii) holds. It also ensures that $(\w,\theta,r_\w)\in\wit\mB$ for all
$\theta\in\P$. In particular, $(\w,\theta,r)\in\mC$ for all
$\theta\in\mP$ such that $(\w,\theta)\in(\WP)_l^+$ and all $r\in[0,r_\w]$.
Since $\w\in\W^*$, $l_\P(((\WP)_l^+)_\w)=1$, so
that $((\WP)_l^+)_\w$ is dense in $\mP$. Therefore
$(\w,\theta,r)\in\wit\mB_l$ for all $\theta\in\mP$ and all
$r\in[0,r_\w]$, which is the first assertion in (iii). The second one
follows immediately from the first one and Theorem~\ref{4.teortapa}.
\par
The $\wit\sigma$-invariance of $(\WP)_l^+$ and the $\wit\tau$-invariance of
$\wit\mB$ ensure that the set $\mC$ is $\wit\tau$-invariant,
which in turn guarantees the
invariance of the sets $\wit\mB_l$ and $\mA_l$. And the last
assertion in (iv) follows from
$r(t,\wth,t)\le \rho$ for all $(\wth,r)\in\wit\mB_l$.
\end{proof}
In order to formulate our main result, we define
\begin{equation}\label{6.defOmega0}
 \W^0:=\{\w\in\W\,|\;\wit\beta(\wth)=0\text{ for all $\theta\in\P$}\}
 \subseteq\W-\W^+\!\,.
\end{equation}
It is obvious that if $\W^0$ is nonempty then
the condition $\W^+\!\ne\W$, included in Hypotheses~\ref{6.H}, holds.
But the converse property cannot be guaranteed:
see Remark \ref{4.notaopciones}.1. So, the conditions we have assumed so far
do not guarantee that $\W^0$ is nonempty.
However this stronger requirement will be imposed in
the formulation of our main result. At the end of the section
we will show that these hypotheses are fulfilled in some
situations which are easy to describe.
For now, we recall once again that the condition $(m_\W\!\!\times\!l_{\R_2})(\mA)>0$
is equivalent to $m_\W(\W^+\!)=1$:
see Proposition~\ref{4.propmedidaW+} and Remark~\ref{4.notamedida}.
All this means that Hypotheses~\ref{6.H} are included in those of the next results.
\begin{teor}\label{6.teormain}
Suppose that $\Sigma_A=\{0\}$, that $(m_\W\!\!\times\!l_{\R_2})(\mA)>0$,
and that the set $\W^0$ given by \eqref{6.defOmega0} is nonempty.
Then there exists a $\sigma$-invariant set $\W_0\subseteq\W$ with
$m_\W(\W_0)=1$ such that $l_{\R^2\!}((\mA_l)_\w)>0$ for
all $\w\in\W_0$, and such that any two different
points $\big(\w,\lsm r_1\sin\theta_1\\r_1\cos\theta_1\rsm\!\big),
\big(\w,\lsm r_2\sin\theta_2\\r_2\cos\theta_2\rsm\!\big)$ of $\mA_l$
form a Li-Yorke pair for the flow $\tau_\R$ whenever $\w\in\W_0$.
\end{teor}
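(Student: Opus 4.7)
My strategy is to reduce to the zero-trace family $\by' = \wit A(\wt)\,\by$ via the decomposition $A = e\,I_2 + \wit A$ of \eqref{5.defe}, apply Theorem \ref{3.teorcaos} (together with the ergodic $1$-sheet structure highlighted in Remark \ref{3.notapaluego}) to control the projective flow, and combine this with the radial factorization $r_l(t,\wth,1) = \wit r_l(t,\wth,1)\,e^{h_e(\wt)-h_e(\w)}$ from Remark \ref{5.notasame}.2, using that $e \in \mR(\W)$ by Proposition \ref{6.propomega+}.

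First I would verify the hypotheses of Theorem \ref{3.teorcaos} for the zero-trace family: its Sacker--Sell spectrum is $\{0\}$ by Remark \ref{5.notasame}.3, and the existence of an unbounded solution follows because $\W^0 \ne \emptyset$ combined with the upper boundedness of $h_e$ forces some $\wit r_l(\cdot,\wth,1)$ to be unbounded; Theorem \ref{6.teormedida} (combined with Remark \ref{5.notasame}.1) supplies a $\wit\sigma$-invariant measure $\wit\mu$ equivalent to $m_\W\!\times\!l_\P$. Passing to a $\wih\sigma$-minimal set $\mM \subseteq \WS$ with an ergodic measure $m_\mM$ projecting onto $m_\W$, Remark \ref{3.notapaluego} then provides the decomposition of $\mM\!\times\!\P$ into ergodic $1$-sheets $\mS_\eta = \{(\wth^1,\varphi_\eta(\wth^1))\}$ together with a compact set $\mK \subseteq \mM$ of large $m_\mM$-measure on which every $\varphi_\eta$ is continuous. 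I would then define $\W_0$ as the intersection of $\W^*$ from \eqref{6.defOmega*}, the set $\W^e$ of Proposition \ref{6.propR}, and the projection onto $\W$ of a Birkhoff-typical subset of $\mM$; Fubini, Birkhoff's ergodic theorem, and Proposition \ref{6.prop413}(ii) give $\sigma$-invariance and $m_\W(\W_0) = 1$, and Proposition \ref{6.prop413}(iii) yields $l_{\R^2}((\mA_l)_\w) > 0$ for every $\w \in \W_0$.

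For the Li-Yorke assertion, since $\mA_l$ is $\tau_\R$-invariant with the flow acting linearly on it (Proposition \ref{6.prop413}(iv)), one has $\by(t,\w,\by_1) - \by(t,\w,\by_2) = U(t,\w)(\by_1-\by_2)$; writing $\by_1-\by_2 = r_v\lsm\sin\theta_v\\\cos\theta_v\rsm$ and combining the functional equation \eqref{6.ecfuncional} (which yields $\wit r_l^{\,2}(t,\wth,1) = p(\wit\sigma(t,\wth))/p(\wth)$) with Remark \ref{5.notasame}.2 gives
\[
|U(t,\w)(\by_1-\by_2)|^2 = r_v^2\,\frac{Q(\wit\sigma(t,\wth_v))}{Q(\wth_v)}, \qquad Q(\wth):=p(\wth)\,e^{2h_e(\w)}.
\]
The condition $\limsup > 0$ then follows from Birkhoff applied to the ergodic decomposition of $\wit\mu$: since $Q$ is positive $(m_\W\!\times\!l_\P)$-a.e.\ and $Q \in L^1$ with positive integral, the Ces\`aro average of $Q\circ\wit\sigma_t$ along typical orbits is positive, precluding convergence to $0$. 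The condition $\liminf = 0$ comes from combining $\inf_{t\ge 0}h_e(\wt) = -\infty$ (Proposition \ref{6.propR}) with a Lusin--Birkhoff selection of a compact subset of $\WP$ on which $p$ is bounded above, extracting a sequence $t_n \to \infty$ along which $h_e(\wt_n) \to -\infty$ while $p(\wit\sigma(t_n,\wth_v))$ remains bounded, so that $Q(\wit\sigma(t_n,\wth_v)) \to 0$.

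The hardest part will be coordinating these two sequences in the $\liminf$ argument, since the excursions of $h_e(\wt)$ to $-\infty$ depend only on $\w$ while the control of $p$ along the orbit depends on $\wth_v$; this is where the hypothesis $\W^0 \ne \emptyset$ (which makes $e^{-2h_e}$ sufficiently singular) plays together with the $1$-sheet structure of Remark \ref{3.notapaluego} to upgrade a generic-$\theta_v$ statement to one valid for every direction arising as a difference of points in $(\mA_l)_\w$, and also to absorb the single exceptional projective direction produced by Theorem \ref{3.teorcaos} into the definition of $\W_0$.
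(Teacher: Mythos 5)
Your decomposition $A = e\,I_2 + \wit A$, the identity $r_l^2(t,\wth,1) = Q(\wit\sigma(t,\wth))/Q(\wth)$ with $Q = p\,e^{2h_e}$, and the use of the $1$-sheet machinery of Remark~\ref{3.notapaluego} all line up with the paper's strategy, and the $Q$-function reformulation is algebraically correct. But there are two genuine gaps, neither of which you close.

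First, you take it for granted that the zero-trace family $\by'=\wit A(\wt)\by$ is weakly elliptic, and your justification fails. The cocycle identity $h_e(\wt)-h_e(\w)=\int_0^t e(\ws)\,ds$ supplied by Proposition~\ref{6.propR} holds only on the full-measure set $\W^e$, whereas $\W^0\subseteq\W-\W^+$ is a null set that need not meet $\W^e$; on $\W^0$ the backward unboundedness of $r_l$ can be carried entirely by $\int_0^t e(\ws)\,ds$ with $\wit r_l$ bounded, which is precisely the situation of Proposition~\ref{6.prophipo}. The paper's proof therefore splits into two cases, and the case where all solutions of the zero-trace family are bounded is outside the reach of Theorem~\ref{3.teorcaos}; there the $\limsup$ estimate is obtained from distality of $(\WS,\wih\sigma,\R)$ and a Lusin--Birkhoff argument applied directly to $h_e$.

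Second, you acknowledge but do not resolve the central difficulty: both halves of your argument (the Ces\`aro-average lower bound and the Lusin--Birkhoff synchronization for the excursions of $h_e$) are inherently statements about an $l_\P$-generic direction $\theta_v$, while the theorem requires the Li-Yorke property for \emph{every} pair in $(\mA_l)_\w$ and hence every $\theta_v\in\P$. The paper circumvents this differently. For $\liminf=0$ it never looks at the difference vector: it bounds each individual radius by $\wit\beta(\wit\sigma(t,\w,\bar\theta_i))$, takes $t_n\uparrow\infty$ with $\wt_n\to\wit\w_0\in\W^0$, and lets upper semicontinuity of $\wit\beta$ force both radii to zero along $(t_n)$ --- an argument that holds for every $\w\in\W$ and every pair, with no exceptional set and no synchronization issue. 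For $\limsup>0$, instead of anchoring the radial lower bound to the density $p$ (which is positive only a.e.\ on each fiber, a set you cannot control direction-by-direction), the paper anchors it to $\wit\beta$: via Lusin it builds a compact $\mM_1$ of positive $m_\mM$-measure on which $\wit\beta\ge\beta^*/\sqrt2>0$ holds \emph{uniformly for all $\theta\in\P$} (the $1/\sqrt2$ convexity trick of Proposition~\ref{6.prop413}(iii) is what upgrades an $l_\P$-generic lower bound to a fiberwise-uniform one), so that Birkhoff returns to $\mM_1$ produce a radial lower bound for every angle simultaneously, with the $1$-sheets supplying the angular separation when $\theta_1\ne\theta_2$ and a scaling argument handling $\theta_1=\theta_2$. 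Replacing $p$ by $\wit\beta$ as the quantity you control on returns is the idea your plan is missing.
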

\begin{proof}
Let $\lsm r_i\sin\theta_i\\r_i\cos\theta_i\rsm$ for $i=1,2$
be two different points of $(\mA_l)_\w$, and let
us call $\bar\theta_i=\mkp(\theta_i)\in\P$. Note that
$r_i\le\wit\beta(\w,\bar\theta_i)$.
We take $\wit\w_0\in\W^0$ and look for $(t_n)\uparrow\infty$ with
$\wit\w_0=\lim_{n\to\infty}\wt_n$ and such that
there exist $\bar\theta_i^0:=
\lim_{n\to\infty}\wit\theta(t_n,\w,\bar\theta_i)$ for $i=1,2$.
Then
\[
\begin{split}
 &0\le\limsup_{n\to\infty}r_l(t_n,\wth_i,r_i)=
 \limsup_{n\to\infty}r(t_n,\wth_i,r_i)=
 \limsup_{n\to\infty}r(t_n,\w,\bar\theta_i,r_i)\\
 &\qquad\qquad
 \le\limsup_{n\to\infty}r(t_n,\w,\bar\theta_i,\wit\beta(\w,\bar\theta_i))
 =\limsup_{n\to\infty}\wit\beta(\wit\sigma(t_n,\w,\bar\theta_i))\le
 \wit\beta(\wit\w_0,\bar\theta^0_i)=0
\end{split}
\]
for $i=1,2$. Here we have used Proposition \ref{6.prop413}(iv),
\eqref{4.SP}, the monotonicity of the flow $\wit\tau$,
Theorem~\ref{4.teortapa}(ii), the semicontinuity of $\wit\beta$
also ensured by Theorem~\ref{4.teortapa}, and the definition of
$\W^0$. Consequently,
\[
\liminf_{t\to\infty}
\dist_{\R^2\!}\big(\by\big(t,\w,\lsm r_1\sin\theta_1\\r_1\cos\theta_1\rsm\!\big),
\by\big(t,\w,\lsm r_2\sin\theta_2\\r_2\cos\theta_2\rsm\!\big)\big)=0\,.
\]
The goal is hence to find
a $\sigma$-invariant set $\W_0$ with $m_\W(\W_0)=1$ such that
two conditions hold for all $\w\in\W_0$:
$l_{\R^2\!}((\mA_l)_\w)>0$, and
\[
 \limsup_{t\to\infty}
 \dist_{\R^2\!}\big(\by\big(t,\w,\lsm r_1\sin\theta_1\\r_1\cos\theta_1\rsm\!\big),
 \by\big(t,\w,\lsm r_2\sin\theta_2\\r_2\cos\theta_2\rsm\!\big)\big)>0
\]
whenever $\lsm r_1\sin\theta_1\\r_1\cos\theta_1\rsm$ and
$\lsm r_2\sin\theta_2\\r_2\cos\theta_2\rsm$ belong to $(\mA_l)_\w$.
Note that the first condition
will be guaranteed by Proposition \ref{6.prop413}(iii) if we
take (as we will do) $\W_0$ contained in the set $\W^*$
given by \eqref{6.defOmega*}, which satisfies $\W^*\subseteq\W^+\!$  and
$m_\W(\W^*)=1$; and that the second condition is equivalent
to
\begin{equation}\label{6.>0}
\limsup_{t\to\infty}
\dist_{\R^2\!}\big(\by_l\big(t,\w,\lsm r_1\sin\theta_1\\r_1\cos\theta_1\rsm\!\big),
\by_l\big(t,\w,\lsm r_2\sin\theta_2\\r_2\cos\theta_2\rsm\!\big)\big)>0
\end{equation}
whenever $\lsm r_1\sin\theta_1\\r_1\cos\theta_1\rsm$ and
$\lsm r_2\sin\theta_2\\r_2\cos\theta_2\rsm$ belong to $(\mA_l)_\w$,
as Proposition \ref{6.prop413}(iv) ensures.
\par
Recall that our hypotheses guarantee that $m_\W(\W^+\!)=1$ and $\W^+\!\ne\W$.
Hence the map $e=(1/2)\tr A$ satisfies the assertions of
Propositions \ref{6.propomega+} and \ref{6.propR}.
In addition, the Saker and Sell spectrum  of the family of systems
\eqref{5.eclineartraza01} given by $\wit A=A-e\,I_2$ is $\{0\}$
(see Remark \ref{5.notasame}.3). Hence we can distinguish two cases:
either all the solutions of all the systems \eqref{5.eclineartraza01}
are bounded, or the family is in the weakly elliptic case.
\vspace{.1cm}\par
{\sc Case 1}. We begin by assuming that all the solutions of
all the systems \eqref{5.eclineartraza01} are bounded, hypothesis
which we will use twice later.
Remark~\ref{5.notasame}.2 ensures that
\[
 r_l(t,\wth_i,r_i)=r_i\,\wit r_l(t,\wth_i,1)\exp\left(\int_0^t
 e(\ws)\,ds\right)=r_i\,\wit r_l(t,\wth_i,1)\:\frac{\exp(h_e(\wt))}{\exp(h_e(\w))}
\]
for $i=1,2$ if $\w\in\W^*\subseteq\W^+\!$,
where $h_e$ is given by Propositions \ref{6.propomega+}
and \ref{6.propR}. Lusin's theorem
provides a compact set $\mK\subset\W$ with $m_\W(\mK)>0$ such that $h_e$
is continuous on $\mK$, and Birkhoff's ergodic theorem ensures that the
$\sigma$-invariant set $\W_0\subseteq\W^*$ of points
$\w$ for which there exists
$(s_n)\uparrow\infty$ with $\ws_n\in\mK$ for all $n\ge 0$ satisfies
$m_\W(\W_0)=1$. We take $\w\in\W_0$ and
two different points $\lsm r_i\sin\theta_i\\r_i\cos\theta_i\rsm$
of $(\mA_l)_\w$. We also take
$(s_n)\uparrow\infty$ with $\ws_n\in\mK$ such that there
exist $(\w_0,\theta_i^0):=\lim_{n\to\infty}(\ws_n,\wih\theta(s_n,\w,\theta_i))$.
All the solutions of all the systems \eqref{5.eclineartraza01}
are also bounded away from zero:
the determinant of the fundamental matrix solution
of \eqref{5.eclineartraza01} is 1 (see \eqref{2.reldetU}),
so that its inverse is also globally bounded.
Hence the two sequences  $(\wit r_l(s_n,\wth_i,1))$ are
contained in an interval $[\kappa_1,\kappa_2]\subset(0,\infty)$.
So, there is no restriction in assuming that there exist
$\bar r_i:=\lim_{n\to\infty}\wit r_l(s_n,\wth_i,1)>0$, and
there also exist $r_i^*:=\lim_{n\to\infty} r_l(s_n,\wth_i,r_i)=r_i\,\bar r_i\,
\exp(h_e(\w_0))/\exp(h_e(\w))$ with $r_i^*\ne 0$ if $r_i\ne 0$.
\par
The assumed boundedness also ensures that the flow
$(\WS,\wih\sigma,\R)$ (see Remark~\ref{5.notasame}.1)
is distal (see e.g.~the proof of Theorem 3.1 of \cite{obpa}). Consequently,
$\lim_{n\to\infty}\dist_\S\big(\wih\theta(s_n,\wth_1),\wih\theta(s_2,\wth_2)\big)>0$
if $\theta_1\ne\theta_2$, so that \eqref{6.>0} holds in this case.
If, on the contrary, $\theta_1=\theta_2$, we have $r_1\ne r_2$ and
$\bar r_1=\bar r_2$, which
ensures that $r_1^*\ne r_2^*$ and hence that \eqref{6.>0} also holds.
\vspace{.1cm}\par
{\sc Case 2}.
Now we assume that the family \eqref{5.eclineartraza01}
is weakly elliptic. This fact and Theorem~\ref{6.teormedida} ensure
that all the hypotheses of Theorem~\ref{3.teorcaos} hold.
As at the beginning of {\sc step 2} in its proof, we choose a
$\wih\sigma$-minimal set $\mM\subseteq\W\times\S$ and consider the
flow $(\mM\!\times\S,\wih\vartheta_{\mM},\R)$.
We also fix a $\wih\sigma$-ergodic measure $m_\mM$ concentrated on $\mM$.
Let $\mK\subseteq\mM$ be the compact set with $m_\mM(\mK)>1/2$
described in Remark~\ref{3.notapaluego}.
It follows from Proposition~\ref{6.prop413}(i) that
$\mC^0:=(\mK\!\times\S^1)\cap\{(\wth^1\!,\theta)\in\mM\!\times\S\,|\;
(\w,\mkp(\theta))\in(\WP)_l^+\}$
satisfies $(m_\mM\!\!\times\!l_\S)(\mC^0)=m_\mM(\mK)>1/2$.
Let us define
$\wih\beta_*\colon\mM\times\S\to[0,\rho],\;(\wth^1\!,\theta)\mapsto
\wit\beta(\w,\mkp(\theta))$.
Lusin's theorem provides a compact
set $\mC^1\subseteq\mC^0$ with $(m_\mM\!\!\times\!l_\S)(\mC^1)>1/2$
such that the restriction of
$\wih\beta_*$ to $\mC^1$ is continuous.
The definitions of $(\WP)_l^+$
and $\wit\beta$ ensure that $\wih\beta_*(\wth^1\!,\theta)=
\wit\beta(\w,\mkp(\theta))\in(0,\rho]$
for all $(\wth^1\!,\theta)\in\mC^1$, and hence that
$\beta_*:=\inf_{(\wth^1\!,\theta)\in\mC^1}\wih\beta_*(\wth^1\!,\theta)\in(0,\rho]$.
In addition, the set
$\mM_1:=\{(\wth^1)\in\mM\,|\;l_\S(\mC^1_{(\wth^1)})>1/2\}$
satisfies $m_\mM(\mM_1)>0$, since $(m_\mM\!\!\times\!l_\S)(\mC^1)>1/2$.
It is clear that $\mM_1\subseteq\mK$.
It is also clear that, if $(\wth^1)\in\mM_1$,
we can find $\theta_1\in\S$ such that the points $(\wth^1\!,\theta_1)$ and
$(\wth^1\!,\theta_2)$ belong to $\mC^1$ for $\theta_2=\theta_1+\pi/2$.
As in the proof of
Proposition~\ref{6.prop413}(iii), we check that
$\wih\beta_*(\wth^1\!,\theta)\ge\beta^*/\sqrt{2}$ for all $\theta\in\S^1$.
That is,
\begin{equation}\label{6.acabando}
 \wit\beta(\wth)
 \ge\beta^*/\sqrt{2}
 \quad\text{for all $\theta\in\P$
 if there exists $\theta^1\!\in\S$ with $(\wth^1)\in\mM_1$}\,.
\end{equation}
It follows form Birkhoff's ergodic theorem that the $\wih\sigma$-invariant
set $\mM_2$ of points $(\wth^1)\in\mM$ for which
there exists a sequence $(t_n)\uparrow\infty$ with
$\wih\sigma(t_n,\wth^1)\in\mM_1$ for all $n\ge 0$
satisfies $m_\mM(\mM_2)>0$. Since
$m_\mM$ is $\wih\sigma$-ergodic, $m_\mM(\mM_1)=1$.
\par
Now we define $\W_0$ as the projection of $\mM_1$ on $\W$
intersected with the set $\W^*$ defined by \eqref{6.defOmega*},
so that $\W_0$ is $\sigma$-invariant, $m_\W(\W_0)=1$, and
$l_{\R^2\!}((\mA_l)_\w)>0$ for all $\w\in\W_0$.
We fix $\w\in\W_0$,
look for $\theta^1\!\in\S$ with $(\wth^1)\in\mM_2$, and
take $\theta_1,\theta_2\in\S$ and $r_1,r_2\ge 0$ such that
$\big(\w,\lsm r_1\sin\theta_1\\r_1\cos\theta_1\rsm\!\big)$
and $\big(\w,\lsm r_2\sin\theta_2\\r_2\cos\theta_2\rsm\!\big)$
are two different points of $\mA_l$.
Our goal is to prove \eqref{6.>0}.
\par
We fix a sequence of positive numbers $(t_n)\uparrow\infty$
with $\wih\sigma(t_n,\wth^1)\in\mM_1$ for all $n\ge 0$.
Since $\mM_1\subseteq\mK$,
using the notation established in Remark~\ref{3.notapaluego}
we have
\[
 \dist_\S(\wih\theta_\mM(t_n,\wth^1\!,\theta_1),
 \wih\theta_\mM(t_n,\wth^1\!,\theta_2))\ge {\rm d}(\eta_1,\eta_2) \,,
\]
where $\eta_1$ and $\eta_2$ are determined by $(\wth^1\!,\theta_i)\in\mS_{\eta_i}$
for $i=1,2$, and ${\rm d}(\eta_1,\eta_2):=\inf_{(\wth^1)\in\mK}
\dist_\P(\varphi_{\eta_1}(\wth^1),\varphi_{\eta_2}(\wth^1))$.
Here $\wih\theta_\mM$ is the fiber component of the skew-product flow
$(\mM\!\times\S,\wih\vartheta_\mM,\R)$, which agrees with that of
$(\WS,\wih\sigma,\R)$. Hence,
\begin{equation}\label{6.disteta}
 \dist_\S(\wih\theta(t_n,\wth_1),\wih\theta(t_n,\wth_2))\ge {\rm d}(\eta_1,\eta_2)\,.
\end{equation}
On the other hand,
$r_l(t_n,\w,\mkp(\theta_i),r_i)=(r_i/\wit\beta(\w,\mkp(\theta_i))\,
r_l(t_n,\w,\mkp(\theta_i),\wit\beta(\w,\mkp(\theta_i))$
and $r_l(t_n,\w,\mkp(\theta_i),\wit\beta(\w,\mkp(\theta_i))\ge
r(t_n,\w,\mkp(\theta_i),\wit\beta(\w,\mkp(\theta_i))$ (since $g\ge g-k_\rho$
and $t_n>0$). Therefore, Theorem~\ref{4.teortapa}(ii) and \eqref{6.acabando} yield
\[
 \limsup_{n\to\infty}r_l(t_n,\wth_i,r_i)\ge\frac{r_i}{\wit\beta(\w,\mkp(\theta_i))}\,
 \limsup_{n\to\infty}\wit\beta(\wit\sigma(t_n,\w,\mkp(\theta_i)))\ge
 \frac{r_i\,\beta^*}
 {\sqrt{2}\,\wit\beta(\w,\mkp(\theta_i))}>0
\]
if $r_i\ne 0$. Since $r_1$ and $r_2$ cannot be simultaneously 0, there is no restriction
in assuming that
\[
 r_1^\infty:=\limsup_{n\to\infty}r_l(t_n,\wth_i,r_1)>0\,.
\]
Suppose now that $\theta_1\ne\theta_2$. Then $\eta_1\ne\eta_2$, so that
$\rm{d}(\eta_1,\eta_2)>0$ and \eqref{6.disteta} yields
$\dist_\S(\wih\theta(t_n,\wth_1),\wih\theta(t_n,\wth_2))>0$. Hence
\eqref{6.>0} holds unless
$\lim_{n\to\infty}r(t_n,\wth_i,r_i)=0$ for $i=1,2$, which is not the case.
Hence the condition \eqref{6.>0} holds in this case.
If, on the contrary, $\theta_1=\theta_2$, then $r_1\ne r_2$ and
\[
\begin{split}
 &\limsup_{n\to\infty}
 \dist_{\R^2\!}\big(\by_l\big(t_n,\w,\lsm r_1\sin\theta_1\\r_1\cos\theta_1\rsm\!\big),
 \by_l\big(t_n,\w,\lsm r_2\sin\theta_1\\r_2\cos\theta_1\rsm\!\big)\big)\\
 &\qquad=
 \limsup_{n\to\infty}|r_l(t_n,\w,\theta_1,r_1)-r_l(t_n,\w,\theta_1,r_2)|
 =|r_1^\infty-(r_2/r_1)r_1^\infty|>0\,,
\end{split}
\]
so that \eqref{6.>0} also holds in this case. The proof is complete.

\end{proof}
The information provided by Proposition~\ref{6.prop413},
Theorem \ref{6.teormain} and
Proposition~\ref{4.propBaire} leads us to:
\begin{coro}\label{6.coromain}
Suppose that $\Sigma_A=\{0\}$, that $(m_\W\!\!\times\!l_{\R_2})(\mA)>0$,
and that the set $\W^0$ given by \eqref{6.defOmega0} is nonempty.
Then the restricted flow $(\mA,\tau_\R,\R)$ is Li-Yorke fiber-chaotic in measure.
And, in addition, there exists a $\sigma$-invariant residual set of points of $\W$
for which the section $\mA_\w$ is given by $\{\bcero\}$.
\end{coro}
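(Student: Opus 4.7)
The proof splits into two independent assertions, neither of which requires substantial new work; both are quick consequences of earlier results.

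For the first assertion, let $\W_0\subseteq\W$ be the $\sigma$-invariant set of full $m_\W$-measure supplied by Theorem~\ref{6.teormain}. For any $\w\in\W_0$ the section $(\mA_l)_\w$ has positive two-dimensional Lebesgue measure, hence is uncountable; and Theorem~\ref{6.teormain} guarantees that any two distinct points of $\mA_l$ sharing first component $\w\in\W_0$ form a Li-Yorke pair under $\tau_\R$. Therefore $\{\w\}\times(\mA_l)_\w$ is an uncountable scrambled set contained in $\mA$ for every $\w\in\W_0$, which is exactly the condition making $(\mA,\tau_\R,\R)$ Li-Yorke fiber-chaotic in measure.

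For the second assertion, Theorem~\ref{4.teortapa}(i) identifies $\{\w\in\W\,|\;\mA_\w=\{\bcero\}\}$ with
\[
 \W^0:=\{\w\in\W\,|\;\wit\beta(\wth)=0\text{ for all }\theta\in\P\}\,,
\]
so it suffices to show that $\W^0$ is a $\sigma$-invariant residual subset of $\W$. The $\sigma$-invariance follows immediately from Theorem~\ref{4.teortapa}(ii), because $r(t,\wth,0)\equiv 0$ forces $\wit\beta(\wt,\wit\theta(t,\wth))=0$ for every $(t,\theta)$ whenever $\w\in\W^0$, and $\wit\theta(t,\w,\cdot)$ is a bijection of $\P$. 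To obtain residuality, I would consider the auxiliary map $M\colon\W\to[0,r_\rho]$ given by $M(\w):=\sup_{\theta\in\P}\wit\beta(\wth)$. Since $\wit\beta$ is upper semicontinuous on $\WP$ and the fiber $\P$ is compact, the superlevel set $\{M\ge c\}$ coincides with the projection onto $\W$ of the closed set $\{\wit\beta\ge c\}$, and this projection is closed by compactness of $\P$; hence $M$ is upper semicontinuous, $\{M>0\}=\bigcup_n\{M\ge 1/n\}$ is $F_\sigma$, and $\W^0=\{M=0\}$ is $G_\delta$.

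Finally, the hypothesis that $\W^0$ is nonempty, combined with $\sigma$-invariance and the minimality of $(\W,\sigma,\R)$, forces $\W^0$ to be dense in $\W$; a dense $G_\delta$ subset of the complete metric space $\W$ is residual, which completes the proof. No step poses a genuine obstacle: the first part is a direct restatement of Theorem~\ref{6.teormain}, and the second is a short Baire-category manipulation based on the upper semicontinuous equilibrium $\wit\beta$ produced by Theorem~\ref{4.teortapa}.
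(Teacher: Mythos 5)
Your proof is correct, and both halves are cleanly argued. The first part matches the paper's route exactly: Theorem~\ref{6.teormain} already produces, for every $\w$ in the full-measure $\sigma$-invariant set $\W_0\subseteq\W^*$, a positive-measure (hence uncountable) section $(\mA_l)_\w$ any two of whose points form a Li-Yorke pair; identifying $\{\w\}\times(\mA_l)_\w\subseteq\mA$ as the required scrambled set gives Li-Yorke fiber-chaos in measure by definition.

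For the second part you take a genuinely different route than the one the paper signals. The paper cites Proposition~\ref{4.propBaire}, whose stated conclusion is that $(\WP)^0$ is residual in $\WP$ and that $\W^+$ is of first Baire category; this gives residuality of $\W-\W^+$ but not, on its own, residuality of the possibly strictly smaller set $\W^0$ (the \lq\lq mixed\rq\rq~points with $(\WP)^+_\w=\{\theta_0\}$ must still be disposed of). Your argument avoids this issue entirely: you introduce $M(\w):=\sup_{\theta\in\P}\wit\beta(\wth)$, observe that it is upper semicontinuous (superlevel sets are projections to $\W$ of the closed, hence compact, superlevel sets of $\wit\beta$ in $\WP$, and projections of compact sets are closed), so that $\W^0=\{M=0\}$ is a $G_\delta$; then $\sigma$-invariance of $\W^0$ (via Theorem~\ref{4.teortapa}(ii) and the fact that $\wit\theta(t,\w,\cdot)$ is a bijection of $\P$) together with nonemptiness and minimality of the base gives density, and a dense $G_\delta$ in the compact metric space $\W$ is residual. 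This buys you exactly the conclusion the corollary asserts about $\W^0$ rather than about $\W-\W^+$, and it is self-contained, resting only on the upper semicontinuity and the equilibrium property of $\wit\beta$ from Theorem~\ref{4.teortapa}. It would be worth noting explicitly in your write-up that the upper semicontinuity of $M$ uses that $\P$ is compact (so the supremum is attained and a convergent subsequence of maximizers can be extracted), but this is a minor expository point, not a gap.
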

Let us now analyse the hypotheses we have worked with in this
section. All its results require Hypotheses \ref{6.H}.
In particular, according to Proposition~\ref{6.propomega+}, the map
$e=(1/2)\tr A$ must belong to the set $\mR(\W)$ of Definition~\ref{5.defR}.
How restrictive is this condition? The set $\mR(\W)$ is a subset of the set
of continuous functions with mean value zero which admit a measurable
primitive, which is known to be (at least in the case of minimal
almost periodic base flow) of the first Baire category:
this is proved by Johnson in
\cite{john2}, and it is expected to be true in more general settings.
Therefore $\mR(\W)$ is not a \lq\lq large" set, at least from a topological point
of view. As a matter of fact, $\mR(\W)$ is empty in the
autonomous and periodic cases. However it is nonempty for more
complex nonautonomous cases: there are well known examples of
functions $e\in\mR(\W)$. For instance, Johnson shows in
\cite{john9} that this is the case for an example based in
a previous one constructed by Anosov in \cite{anos1}.
The main ideas of the description of this example in \cite{john9}
are used by Ortega and Tarallo in \cite{orta} in order
to optimize the construction of functions with these characteristics.
In both cases, $(\W,\sigma,\R)$ is a minimal quasi periodic
but not periodic flow. And recently Campos {\em et al.} \cite{caot2}
have shown the existence of elements of $\mR(\W)$
whenever the base flow is nonperiodic, uniquely ergodic and minimal.
\par
The condition $e\in\mR(\W)$ does not suffice to guarantee Hypothesis \ref{6.H},
or the more restrictive conditions of Theorem~\ref{6.teormain}
and Corollary \ref{6.coromain}. However,
\begin{prop}\label{6.prophipo}
Suppose that $e\in\mR(\W)$, that $\wit A\colon\W\to\M_{2\times 2}(\R)$
is continuous and with $\tr\wit A=0$, and that all the solutions of all the systems
$\by'=\wit A(\wt)\,\by$ are bounded. Let $k_\rho\colon\R^+\to\R^+$ be given
by \eqref{4.defkrho}. Then the family of systems
\begin{equation}\label{6.ecejemplo}
 \by'=\big(\wit A(\wt)+e(\wt)\,I_2\big)\,\by-k_\rho(|\by|)\,\by
\end{equation}
satisfies the hypotheses of Theorem~\ref{6.teormain} and Corollary \ref{6.coromain}.
\end{prop}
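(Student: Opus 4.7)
The plan is to verify the three hypotheses of Theorem~\ref{6.teormain} and Corollary~\ref{6.coromain}: that $\Sigma_A=\{0\}$ for $A:=\wit A+e\,I_2$, that $(m_\W\!\times\!l_{\R^2})(\mA)>0$, and that the set $\W^0$ defined in \eqref{6.defOmega0} is nonempty. The key observation is that the fundamental matrix $U(t,\w)$ of $\by'=A(\wt)\,\by$ factors as $U(t,\w)=\exp\!\big(\!\int_0^t e(\ws)\,ds\big)\,\wit U(t,\w)$, where $\wit U$ is the fundamental matrix of $\by'=\wit A(\wt)\,\by$. Since by assumption every solution of every system of the latter family is bounded, and since $\tr\wit A\equiv 0$ forces $\det\wit U\equiv 1$ (recall \eqref{2.reldetU}), a standard minimality and compactness argument (of the kind already used inside the proof of Theorem~\ref{5.teortodasacot}) produces a constant $c\ge 1$ with $|\wit U(t,\w)|,\,|\wit U(t,\w)^{-1}|\le c$ on $\R\!\times\!\W$. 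Therefore
\[
 c^{-1}\exp\!\Big(\!\int_0^t e(\ws)\,ds\Big)\,|\by_0|\le|\by(t,\w,\by_0)|\le c\exp\!\Big(\!\int_0^t e(\ws)\,ds\Big)\,|\by_0|\,,
\]
so the dynamics of the linear family reduces to the scalar behaviour of $t\mapsto\int_0^t e(\ws)\,ds$, which is governed by Proposition~\ref{6.propR}.

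For $\Sigma_A=\{0\}$, I would fix any $\w_0$ in the full-measure set $\W^e$ provided by Proposition~\ref{6.propR}(2) and apply Remark~\ref{2.notaED}.1. For $\lambda>0$ the uniform bound $\int_0^t(e(\w_0\pu s)-\lambda)\,ds\le\sup_{s\in\R}|\!\int_0^s e(\w_0\pu r)\,dr|-\lambda t$ combined with the upper bound above forces uniform exponential decay of solutions on $t\ge 0$, so exponential dichotomy holds with $F^+=\R^2$; the case $\lambda<0$ is symmetric with $F^-=\R^2$. For $\lambda=0$ both estimates make every solution globally bounded yet none decays exponentially, so Remark~\ref{2.notaED}.2 rules out exponential dichotomy. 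Hence $\Sigma_A=\{0\}$.

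For $\W^0\ne\emptyset$, I would invoke the last assertion of Proposition~\ref{6.propR}: on a residual subset of $\W$ one can find a sequence $t_n\downarrow-\infty$ with $\int_0^{t_n}e(\w_0\pu s)\,ds\to\infty$. The lower estimate then yields $|\by(t_n,\w_0,\by_0)|\to\infty$ for every $\by_0\ne\bcero$, so $\sup_{t\le 0}r_l(t,\w_0,\theta,1)=\infty$ for every $\theta\in\P$; Proposition~\ref{4.propacotadas} gives $\wit\beta(\w_0,\theta)=0$, i.e., $\w_0\in\W^0$. For the measure condition, Proposition~\ref{4.propmedidaW+} together with Remark~\ref{4.notamedida} reduces the task to showing $m_\W(\W^+\!)=1$; but for any $\w_0\in\W^e$ the upper estimate gives $\sup_{t\in\R}|\by(t,\w_0,\by_0)|\le c\,\sup_{t\in\R}\exp\!\big(\!\int_0^t e(\w_0\pu s)\,ds\big)\,|\by_0|<\infty$, so $\sup_{t\le 0}r_l(t,\w_0,\theta,1)<\infty$ for every $\theta$, and Proposition~\ref{4.propacotadas} yields $\wit\beta(\w_0,\theta)>0$, i.e., $\w_0\in\W^+\!$. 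Thus $\W^e\subseteq\W^+\!$ and $m_\W(\W^+\!)\ge m_\W(\W^e)=1$. The only mildly delicate point I anticipate is justifying the uniform two-sided bound on $\wit U$ from the pointwise boundedness of solutions; once that is in hand, the whole proof is a clean transfer of the scalar cohomological information contained in Proposition~\ref{6.propR} to the two-dimensional dissipative family.
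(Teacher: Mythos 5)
Your proof is correct and follows essentially the same route as the paper's: the decisive step in both is the multiplicative factorization of the fundamental matrix (equivalently, of the radial solution $r_l(t,\wth,1)=\wit r_l(t,\wth,1)\exp(\int_0^t e(\ws)\,ds)$ as in Remark~\ref{5.notasame}.2) into the scalar factor $\exp(\int_0^t e)$ times a bounded trace-zero cocycle, after which Proposition~\ref{6.propR} and Proposition~\ref{4.propacotadas} deliver $m_\W(\W^+)=1$ and $\W^0\neq\emptyset$ just as you describe. The only cosmetic differences are that the paper obtains $\Sigma_A=\{0\}$ from the Lyapunov-exponent characterization in Theorem~\ref{2.teorOsel} rather than a direct dichotomy estimate, and that it works with per-$\w$ bounds on $\wit r_l(t,\wth,1)$ rather than a uniform bound on $\wit U$ (the latter is in fact true, by a Baire-category-plus-minimality argument applied to the lower semicontinuous map $\w\mapsto\sup_t|\wit U(t,\w)|$, though a $\w$-dependent bound would already suffice for every step of your argument, so the concern you flag is not a real gap).
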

\begin{proof}
Let us call $A:=\wit A+e\,I_2$.
With the notation established in Remark \ref{5.notasame}.2,
$r_l(t,\wth,1)=\wit r_l(t,\wth,1)\exp\big(\!\int_0^t
e(\ws)\,ds\big)$. This relation and
Proposition \ref{6.propR} have two consequences. The first
one is that the solutions of the systems $\by'=A(\wt)\,\by$ are bounded for
all $\w$ in a $\sigma$-invariant set of full measure. In particular,
$\Sigma_A=\{0\}$ (see Theorem~\ref{2.teorOsel}) and
$m_\W(\W^+)=1$ (see Proposition~\ref{4.propacotadas}), so that
$(m_\W\!\!\times\!l_{\R_2})(\mA)>0$ (see Proposition~\ref{4.propmedidaW+} and
Remark~\ref{4.notamedida}). The second
consequence is that all the nonzero solutions of the systems $\by'=A(\wt)\,\by$
are unbounded on $(-\infty,0]$ for $\w$ in a residual subset of $\W$.
(Here we are using that the functions $\wit r_l(t,\wth,1)$ are
bounded away from 0: see e.g.~{\sc case 1} in the proof of
Theorem \ref{6.teormain}.)
This property and Proposition~\ref{4.propacotadas} ensure that
the set $\W^0$ given by \eqref{6.defOmega0} contains a residual set.
\end{proof}
\begin{nota}\label{6.notahipo}
The hypotheses of Theorem~\ref{6.teormain} and Corollary \ref{6.coromain}
are also fulfilled by some families of systems of the form \eqref{6.ecejemplo}
with $e\in\mR(\W)$ and
$\by'=\wit A(\wt)\,\by$ in the weakly elliptic case
(see Definition \ref{3.defiweak}). For instance, we
take $e\in\mR(\W)$ and $\wit A=\lsm e/2&0\\0&-e/2\rsm$, call
$A:=e\,I_2+\wit A$, and note that
\[
 U(t,\w)=\left[\begin{array}{cc}\exp(\int_0^t (3e(\ws)/2)\,ds)&0
 \\0&\exp(\int_0^t (e(\ws)/2)\,ds)\end{array}\right]
\]
is the fundamental matrix solution
of $\by'=A(\wt)\,\by$.
It follows from Theorem~\ref{2.teorOsel} that
$\Sigma_A=\{0\}$. In addition, the solutions of the systems
corresponding to the set $\W^e$ given by Proposition~\ref{6.propR}
are bounded, so that $m_\W(\W^+)=1$; and all the nonzero solutions
of the systems corresponding to a residual subset of $\W$,
also given by Proposition~\ref{6.propR}, are unbounded,
so that the set $\W^0$ is nonempty.
\end{nota}
We conclude by recalling that the carried-on analysis gives rise to
examples fitting in what we called
{\em nonautonomous Hopf bifurcation pattern with zero spectrum\/}
in the Introduction. To construct one of these examples it is
enough to take a family of systems of the form
\eqref{6.ecejemplo} satisfying the hypotheses of Theorem \ref{6.teormain}
and Corollary \ref{6.coromain}
(as those provided by Proposition~\ref{6.prophipo} and Remark~\ref{6.notahipo}), and
to consider the one-parametric family of families of ODEs
\[
  \by'=\big(\wit A(\wt)+(e(\wt)+\ep)\,I_2\big)\,\by-k_\rho(|\by|)\,\by
\]
for $\w\in\W$ and $\ep\in\R$.
The Sacker and Sell spectrum of the associated linear family given by $\ep$
is $\{\ep\}$.
Hence, if $\ep<0$, the global attractor $\mA$ is $\W\times\{\bcero\}$;
and if $\ep>0$, then $\mA$ contains a set $\W\times\{\by\in\R^2\,|\;
|\by|\le\beta\}$ for a $\beta>0$ and, as a matter of fact, it is homeomorphic to
this set. These assertions follow from the discussion carried-on in Section~\ref{sec5}.
Finally, the situation at the bifurcation point $\ep=0$ is that
described in Theorem~\ref{6.teormain} and Corollary \ref{6.coromain},
with the occurrence of Li-Yorke chaos.

\end{document}